\newtheorem{theorem}{Theorem}
\newtheorem{definition}[theorem]{Definition}  
\newtheorem{lemma}[theorem]{Lemma}
\newtheorem{remark}[theorem]{Remark}
\newtheorem{proposition}[theorem]{Proposition}
\newcommand{\shrink}[2]{\mathrm{shrink}(#1,\,#2)}
\begin{document}

\title{Difference-of-Convex Elastic Net for Compressed Sensing}

\author{Lang Yu, Nanjing Huang{*}
	\thanks{*Corresponding author: Nanjing Huang (e-mail: njhuang@scu.edu.cn; nanjinghuang@hotmail.com).}
	\thanks{Lang Yu and Nanjing Huang are with Department of Mathematics, Sichuan University, Sichuan, 610064, China (e-mail: langyu9602@gmail.com).}
	\thanks{This work was funded by the National Natural Science Foundation of China (12171339, 12471296).}
}



\maketitle

\begin{abstract}
This work  proposes a novel and unified sparse recovery framework, termed the difference of convex Elastic Net (DCEN). This framework effectively balances strong sparsity promotion with solution stability, and is particularly suitable for high-dimensional variable selection involving highly correlated features. Built upon a difference-of-convex (DC) structure, DCEN employs two continuously tunable parameters to unify classical and state-of-the-art models--including LASSO, Elastic Net, Ridge, and $\ell_1-\alpha\ell_2$--as special cases. Theoretically, sufficient conditions for exact and stable recovery are established under the restricted isometry property (RIP), an oracle inequality and recovery bound are derived for the global solution, and a closed-form expression of the DCEN regularization proximal operator is obtained. Moreover, two efficient optimization algorithms are developed based on the DC algorithm (DCA) and the alternating direction method of multipliers (ADMM). Within the Kurdyka-\L{}ojasiewicz (K\L{}) framework, the global convergence of DCA and its linear convergence rate are rigorously established. Furthermore, DCEN is extended to image reconstruction by incorporating total variation (TV) regularization, yielding the DCEN-TV model, which is efficiently solved via the Split Bregman method. Numerical experiments demonstrate that DCEN consistently outperforms state-of-the-art methods in sparse signal recovery, high-dimensional variable selection under strong collinearity, and Magnetic Resonance Imaging (MRI) image reconstruction, achieving superior recovery accuracy and robustness.
\end{abstract}

\begin{IEEEkeywords}
Sparsity, Compressed Sensing, Nonconvex Optimization, DCA, Global Convergence
\end{IEEEkeywords}

\section{Introduction}
In the compressed sensing (CS) framework~\cite{candes2006robust,donoho2006compressed}, the key challenge is how to choose a suitable regularization to constrain the solution for recovering a sparse signal $\bm{x}$ from underdetermined measurements $\bm{y} = \bm{\Phi}\bm{x}$. Ideally, one considers the optimization problem $\min_{\bm{x}} \|\bm{x}\|_0$ subject to $\bm{y} - \bm{\Phi}\bm{x} \in \mathcal{E}$, where $\mathcal{E} \subseteq \mathbb{R}^m$ models noise (with $\mathcal{E} = \{\mathbf{0}\}$ in the noiseless case). However, $\ell_0$ minimization is NP-hard and nonconvex. Cand\`{e}s et al.~\cite{candes2006robust} showed that if $\bm{\Phi}$ satisfies the RIP condition, $\ell_0$ minimization can be equivalently replaced by the convex $\ell_1$ minimization (Basis Pursuit, BP)~\cite{chen2001atomic}, i.e.,  $\min_{\bm{x}} \|\bm{x}\|_1$ subject to $\bm{y} - \bm{\Phi}\bm{x}\in \mathcal{E}$. Nevertheless, the $\ell_1$ approximation leads to both bias and suboptimal recovery because $\ell_1$ is dominated by large-magnitude components,  especially when $\bm{\Phi}$ has highly coherent columns~\cite{fannjiang2012coherence}.

Recent studies have shown that nonconvex sparsity-promoting measures not only significantly reduce the required number of measurements, but also achieve superior recovery performance and enhanced sparsity of the solution. Popular nonconvex measures include the $\ell_p$ quasi-norm ($p \in (0,1)$) and its variants~\cite{candes2008enhancing,chartrand2007exact,daubechies2010iteratively,xu2012l}, the minimax concave penalty \cite{zhang2010nearly}, the capped-$\ell_1$~\cite{zhang2010analysis}, the smoothly clipped absolute deviation  penalty~\cite{fan2001variable}, and the transformed $\ell_1$  ~\cite{zhang2018minimization} as well as nonconvex measures based on probability density functions~\cite{zhou2022unified}. It should be noticed that the nonconvex measures mentioned above are all separable in structure, i.e., they can be decomposed into a sum of identical one-dimensional functions. Although the separable nonconvex measures perform well in scenarios with low mutual coherence or low dynamic range, the nonseparable nonconvex measures generally exhibit superior sparse recovery performance when the sensing matrix is highly coherent or the signal possesses a high dynamic range~\cite{zhou2025recovery}. 

One important nonseparable measure is the ratio-type nonconvex measure.  Such a measure has attracted particular attention due to its scale invariance, which is getting closer as the inherently scale-invariant $\ell_0$ ``norm" (which is not a true norm). It is worth mentiong that the ratio-type nonconvex measures were first introduced in the work of Hoyer~\cite{hoyer2002non}, followed by extensive studies on the $\ell_1/\ell_2$ model~\cite{rahimi2019scale, xu2021analysis} and its associated algorithms \cite{boct2022extrapolated, tao2022minimization,wang2020accelerated,zeng2021analysis}, as well as its applications in computed tomography  reconstruction~\cite{wang2021limited}. Other ratio-type nonconvex measures include $(\ell_1/\ell_2)^2$ \cite{jia2025sparse} and the $q$-ratio sparsity measure $(\ell_1/\ell_q)^{\frac{q}{q-1}} (1<q\leq \infty)$~\cite{zhou2021minimization}. A rich body of literature has also investigated the $\ell_1/\ell_q$ ($q>1$) model, including~\cite{huang2018sparse,li2022proximal,wang2022minimizing,wang2023variant}.Despite having some advantages in approximating $\ell_0$ and promoting sparsity, ratio-type nonconvex measures suffer from highly complex structures and strong nonconvexity. Moreover, they are neither globally Lipschitz continuous nor  coercive, which render theoretical analysis and algorithm design highly challenging. In addition, some models (e.g., $\ell_1/\ell_2$) may even tend to produce an abnormally large coefficient while suppressing other nonzero components \cite{rahimi2019scale}.

Another important nonseparable nonconvex measure is the difference-type. The contour of such a measure is getting closer as the $\ell_0$ norm, which makes it more effective in promoting sparsity. The $\ell_{1-2}$ (e.g., $\ell_1-\ell_2$) measure, originally proposed in~\cite{esser2013method} as a sparsity-inducing penalty for nonnegative least squares problems and later applied to sparse recovery in~\cite{lou2015computing,yin2015minimization}, has been demonstrated to possess superior performance over existing measures when the sensing matrix is highly coherent. Subsequently, several other difference-type nonconvex measures have been proposed and extensively studied, including $\ell_1 - \alpha  \ell_2$~\cite{ge2021dantzig,lou2018fast}, $\ell_1-\beta\ell_q$~\cite{huo2023minimization}, $\ell_r - \alpha \ell_1$~\cite{zhou2023rip}, $\ell_q - \alpha \ell_p$ ($p \in (1,2]$)~\cite{gu2024nonconvex}, and weighted $\ell_q - \alpha \ell_p$ ($q \in (0,1]$, $p \in [q, +\infty]$, $\alpha \in [0,1]$)~\cite{zhan2025sparse}.  Nevertheless, difference-type nonconvex measures also exhibit inherent limitations. The first one is that the difference-type nonconvex measures may gradually become biased when the number of dominant components (ordered by magnitude) in the signal increases. For instance, the behavior of $\ell_{1-2}$ tends to approach that of $\ell_1$~\cite{ma2017truncated}. The second one is that the difference-type nonconvex measures primarily focus on sparsity itself and often fail to effectively capture the grouping effect when dealing with highly correlated variables, which may result in unstable solutions in practical applications.

To overcome the aforementioned limitations in balancing sparsity promotion and robustness under coherent sensing matrices, in the present paper, inspired by Elastic Net~\cite{zou2005regularization}, we propose a more general difference-type nonseparable nonconvex sparsity measure for compressed sensing  and image reconstruction variable selection. Specifically, we consider the following constrained optimization problem
\begin{equation}\label{eq:constprobm}
	\min_{\bm{x} \in \mathbb{R}^n} \gamma \bigl(\|\bm{x}\|_1 - \alpha \|\bm{x}\|_2\bigr) + (1-\gamma)\|\bm{x}\|_2^2 
	\quad \text{s.t.} \quad \bm{A}\bm{x} = \bm{b}, 
\end{equation}
and its unconstrained regularized form
\begin{equation}\label{eq:unconstprobm}
	\min_{\bm{x} \in \mathbb{R}^n} \frac{1}{2}\|\bm{A}\bm{x} - \bm{b}\|_2^2 
	+ \lambda \left( \gamma \bigl(\|\bm{x}\|_1 - \alpha \|\bm{x}\|_2\bigr) + (1-\gamma)\|\bm{x}\|_2^2 \right),
\end{equation}
where $\bm{A} \in \mathbb{R}^{m \times n}$ denotes the sensing matrix, $\bm{b} \in \mathbb{R}^m$ is the observation vector. The parameter $\lambda > 0$ balances the data fidelity term and the regularization term. Two key tuning parameters are introduced: $\gamma \in (0,1)$ controls the mixing ratio between the $\ell_1-\alpha\ell_2$ and $\ell_2^2$ components, while $\alpha \in (0,1)$ adjusts the strength of the nonconvex difference term. The unconstrained regularized problem~\eqref{eq:unconstprobm} is referred to as DCEN, and the constrained problem~\eqref{eq:constprobm} as CDCEN. DCEN aims to combine the high-precision recovery capability of nonconvex regularization with the stability of Elastic Net, and seeks a balance between sparsity induction and bias control through adjustable parameters. Notably, DCEN provides a highly unified mathematical framework: by flexibly tuning $\gamma$ and $\alpha$, DCEN reduces to several classical convex and nonconvex regularization models:
\begin{itemize}
	\item $\ell_1 - \ell_2$ model: when $\gamma \to 1$ and $\alpha \to 1$, the regularization term degenerates to the difference measure $\|\bm{x}\|_1 - \|\bm{x}\|_2$;
	\item $\ell_1 - \alpha \ell_2$ model: when $\gamma \to 1$ and $\alpha \in (0,1)$, DCEN recovers the classical nonconvex $\ell_1 - \alpha \ell_2$ measure;
	\item LASSO model: when $\gamma \to 1$ and $\alpha \to 0$, the $\ell_2$ term vanishes, yielding the standard $\ell_1$ minimization problem;
	\item Elastic Net model: when $\gamma \in (0,1)$ and $\alpha \to 0$, the nonconvex subtraction term disappears, reducing DCEN to Elastic Net;
	\item Ridge regression: when $\gamma \to 0$ and $\alpha \to 0$, only the $\ell_2^2$ term remains.
\end{itemize}

In summary, DCEN inherits the advantages of nonconvex measures in sparse approximation while retaining the ability of the Elastic Net to handle correlated variables. Compared with ratio-type measures, DCEN is constructed within the DC framework, which guarantees coercivity of the objective function and enables efficient optimization via DC algorithms (DCA). Tuning the parameter $\gamma$ preserves the sparsity-promoting property while effectively mitigating nonconvexity and enhancing smoothness.  

Beyond the novelty of the model, the main challenges lie in designing an optimization algorithm for the nonconvex and nonsmooth DCEN with rigorous convergence guarantees and explicit convergence rates, as well as in establishing recovery conditions for CDCEN and DCEN. These challenges necessitate novel technical tools, particularly the careful treatment of the convex–nonconvex hybrid structure jointly governed by $\gamma$ and $\alpha$. In contrast, methods based on a single sparsity measure are generally ill-suited to such hybrid structures. Moreover, our algorithmic design and theoretical analysis differ substantially from existing works \cite{yin2015minimization}, whose DC decompositions fail to satisfy the standard DCA assumptions and only guarantee sequence convergence. By contrast, to enhance sparse signal recovery while preserving the grouping effect among variables, we construct a DC decomposition satisfying the standard DCA assumptions and establish both global convergence and explicit convergence rates of the proposed algorithm. The main contributions of this paper can be summarized as follows:
\begin{enumerate}
	\item[(i)] Both constrained and unconstrained DCEN models are proposed and some conditions are obtained for ensuring the exact and stable recovery based on RIP, which extends and improvs the existing results in~\cite{yan2017sparse, yin2015minimization}. Moreover, we introduce the restricted eigenvalue condition for DCEN and use it to establish the oracle property and recovery bound for the global solution.
	\item[(ii)] A closed-form analytical solution of the proximal operator is derived for DCEN regularization under some mild assumptions, and further an efficient DC decomposition and a variable-splitting strategy are, respectively, developed for solving DCEN, which lead to DCA- and ADMM-based iteration schemes.
	\item[(iii)] Global convergence and linear convergence rates of the proposed DCA and ADMM are rigorously established under the K\L{} framework, which extend the convergence result of DCA proposed in \cite{yin2015minimization}.
	\item[(iv)] DCEN is extended to structured inverse problems by incorporating total variation regularization to form the DCEN-TV model, and an efficient algorithm based on the split Bregman method is developed, demonstrating its extensibility in image-related tasks.
	\item[(v)] Systematic experiments are provided for the noiseless or noisy signal recovery, high-dimensional correlated variable selection, and MRI reconstruction to show that the performance of DCEN consistently is superior to the state-of-the-art methods in terms of both accuracy and stability.
\end{enumerate}

The rest of this work is structured as follows. The next section presents the theoretical analysis of CDCEN. After that in Section \ref{sec:proxm_operator}, a closed-form solution of the proximal operator is given for the DCEN minimization. In Section~\ref{sec:DCAalgorithms}, we describe DCA and provide its convergence analysis. In Section \ref{sec:admm}, we develop an efficient ADMM scheme for solving the DCEN minimization. In Section \ref{sec:NumerExpms}, we report numerical experiment results, before we summarize the results in Section \ref{sec:conclusion}.

\section{Recovery properties of CDCEN}\label{sec:ReDCENprop}
This section systematically analyzes the theoretical properties of CDCEN with respect to exact recovery and robust recovery.

\subsection{Exact and stable recovery} 

We first show the following lemmas concerning the measure $\|\bm{x}\|_1 - \alpha\|\bm{x}\|_2$, which will be frequently invoked later in the proofs.
\begin{lemma}\label{lem:lowerrboun1}
	Let $\alpha \in (0,1)$ and $\bm{x} \in \mathbb{R}^n \setminus \{\bm{0}\}$ with $\|\bm{x}\|_0 = s$ and $\Lambda = \mathrm{supp}(\bm{x})$. Then the following statements hold:
	\begin{enumerate}
		\item[(a)] $(n - \alpha\sqrt{n})x_{min} + (1-\alpha)(\|\bm{x}\|_1-nx_{min}) \leq \|\bm{x}\|_1 - \alpha\|\bm{x}\|_2 \le (\sqrt{n} - \alpha) \|\bm{x}\|_2$,
		\item[(b)] $(s - \alpha\sqrt{s})x_{min} + (1-\alpha)(\|\bm{x}_{\Lambda}\|_1-sx_{min}) \leq \|\bm{x}_{\Lambda}\|_1 - \alpha\|\bm{x}_{\Lambda}\|_2 \le (\sqrt{s} - \alpha) \|\bm{x}\|_2$,
	\end{enumerate}
	where $x_{\min}$ in (a) is defined as $x_{\min} = \min_j |x_j|$ and $x_{\min}$ in (b) is defined as $x_{\min} = \min_{j \in \Lambda} |x_j|$.
\end{lemma}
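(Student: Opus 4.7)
My plan is to prove the two bounds in part~(a) directly, and then derive part~(b) by reducing it to part~(a) applied to the restriction $\bm{x}_\Lambda$ viewed as a vector in $\mathbb{R}^s$. I expect the upper bounds and the reduction to be essentially immediate; the only substantive step is the lower bound in~(a).

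For the upper bound in~(a), I would simply invoke the standard norm inequality $\|\bm{x}\|_1 \le \sqrt{n}\,\|\bm{x}\|_2$ (Cauchy--Schwarz applied to $\langle \mathrm{sign}(\bm{x}), \bm{x}\rangle$), and rearrange to obtain $\|\bm{x}\|_1 - \alpha\|\bm{x}\|_2 \le (\sqrt{n}-\alpha)\|\bm{x}\|_2$.

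For the lower bound in~(a), my first move is an algebraic simplification: expanding
\[
(n-\alpha\sqrt{n})x_{\min} + (1-\alpha)\bigl(\|\bm{x}\|_1 - n x_{\min}\bigr) = (1-\alpha)\|\bm{x}\|_1 + \alpha(n-\sqrt{n})x_{\min},
\]
so the claimed inequality is equivalent to
\[
\|\bm{x}\|_1 - \|\bm{x}\|_2 \;\ge\; (n-\sqrt{n})\,x_{\min}.
\]
This reformulation is the key step and makes the geometric content transparent: equality holds when all $|x_i|$ are equal to $x_{\min}$. To establish it, I would decompose $|x_i| = x_{\min} + z_i$ with $z_i \ge 0$, so that $\|\bm{z}\|_1 = \|\bm{x}\|_1 - n x_{\min}$. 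Then by the triangle inequality (Minkowski) for $\|\cdot\|_2$ applied to the split $|\bm{x}| = x_{\min}\bm{1} + \bm{z}$, followed by the elementary bound $\|\bm{z}\|_2 \le \|\bm{z}\|_1$ valid for any nonnegative vector, I get
\[
\|\bm{x}\|_2 \;\le\; \sqrt{n}\,x_{\min} + \|\bm{z}\|_2 \;\le\; \sqrt{n}\,x_{\min} + \|\bm{x}\|_1 - n x_{\min},
\]
which rearranges to exactly the required inequality.

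For part~(b), I would regard $\bm{x}_\Lambda$ as a vector in $\mathbb{R}^s$ (its ambient zeros contribute nothing to either $\ell_1$ or $\ell_2$) and directly apply part~(a) with $n$ replaced by $s$. The only thing to note is that $\|\bm{x}_\Lambda\|_2 = \|\bm{x}\|_2$ because $\Lambda = \mathrm{supp}(\bm{x})$, which is precisely what lets the upper bound be written as $(\sqrt{s}-\alpha)\|\bm{x}\|_2$ rather than $(\sqrt{s}-\alpha)\|\bm{x}_\Lambda\|_2$. The main obstacle, if any, is recognizing the clean reformulation in the displayed inequality above; once that reduction is performed, the rest is a two-line argument combining Minkowski and the $\ell_2 \le \ell_1$ bound for nonnegative vectors.
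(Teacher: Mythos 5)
Your proof is correct and follows essentially the same route as the paper: both hinge on decomposing $|x_i| = x_{\min} + (\text{residual})$ and establishing the key bound $\|\bm{x}\|_2 \le \sqrt{n}\,x_{\min} + (\|\bm{x}\|_1 - n x_{\min})$, then reducing (b) to (a) on $\bm{x}_\Lambda$. Your derivation of that bound via Minkowski plus $\|\bm{z}\|_2 \le \|\bm{z}\|_1$ is a slightly cleaner packaging of what the paper obtains by expanding the square, but it is the same underlying argument.
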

\begin{proof}
	(a) By the Cauchy–Schwarz inequality $\|\bm{x}\|_1 \le \sqrt{n}\|\bm{x}\|_2$, it follows that $\|\bm{x}\|_1 - \alpha\|\bm{x}\|_2 \le \sqrt{n}\|\bm{x}\|_2 - \alpha\|\bm{x}\|_2 = (\sqrt{n}-\alpha)\|\bm{x}\|_2$.
	
	Next, we establish the lower bound. Without loss of generality, assume $|x_1| \geq |x_2| \geq \cdots \geq |x_n|$. We define the residual term of the $\ell_1$-norm as $\Delta_1 = \|\bm{x}\|_1 - n \cdot x_{\min} = \sum_{i=1}^n |x_i| - \sum_{i=1}^n x_{\min} = \sum_{i}^n (|x_i| - x_{\min})$.Since for any $ i \in \{1,2,\ldots,n\} $, we have $ |x_i| \ge x_{\min} $ and $ \Delta_1 \ge 0 $. Moreover, $ \Delta_1 = 0 $ if and only if all nonzero elements $ |x_i| $ are equal to $ x_{\min} $. Hence, the $\ell_1$-norm can be written as  
	\begin{equation}\label{eq:norm-1}
		\|\bm{x}\|_1 = n \cdot x_{\min} + \Delta_1.
	\end{equation}
	Let $ \delta_i = |x_i| - x_{\min} \ge 0 $. Then we have $ \Delta_1 = \sum_{i =1}^n \delta_i $ and
	\begin{equation}\label{eq:norm-2}
		\begin{aligned}
			\|\bm{x}\|_2^2 = n \cdot x_{\min}^2 + 2x_{\min}\Delta_1 + \|\bm{\delta}\|_2^2.
		\end{aligned}
	\end{equation}
	From the inequality $ \|\bm{\delta}\|_2 \le \|\bm{\delta}\|_1 $, one has  
	\begin{equation}\label{eq:norm-delta}
		\|\bm{\delta}\|_2^2 = \sum \delta_i^2 \le \left( \sum \delta_i \right)^2 = \Delta_1^2.
	\end{equation}
	Substituting \eqref{eq:norm-delta} into the expression of $ \|\bm{x}\|_2^2 $ in \eqref{eq:norm-2} leads to $\|\bm{x}\|_2^2 \le n \cdot x_{\min}^2 + 2x_{\min}\Delta_1 + \Delta_1^2$.
	Note that $(\sqrt{n}x_{\min} + \Delta_1)^2 = n x_{\min}^2 + 2\sqrt{n}x_{\min}\Delta_1 + \Delta_1^2$.
	Since $ n \ge 1 $ (the number of entries in $\bm{x}$ is at least one), it follows that $ \sqrt{n} \ge 1 $. By the fact that $ x_{\min} \ge 0 $ and $ \Delta_1 \ge 0 $, we have $2x_{\min}\Delta_1 \le 2\sqrt{n}x_{\min}\Delta_1$ and $n x_{\min}^2 + 2x_{\min}\Delta_1 + \Delta_1^2  \le n x_{\min}^2 + 2\sqrt{n}x_{\min}\Delta_1 + \Delta_1^2 = (\sqrt{n}x_{\min} + \Delta_1)^2$. Taking the square root on both sides gives $\sqrt{n x_{\min}^2 + 2x_{\min}\Delta_1 + \Delta_1^2} \le \sqrt{n}x_{\min} + \Delta_1$. Combining with $ \|\bm{x}\|_2^2 \le n x_{\min}^2 + 2x_{\min}\Delta_1 + \Delta_1^2 $, the upper bound of $ \|\bm{x}\|_2 $ is given by $\|\bm{x}\|_2 \le \sqrt{n}x_{\min} + \Delta_1$. Substituting the expression of $ \|\bm{x}\|_1 $ in \eqref{eq:norm-1} and the upper bound of $ \|\bm{x}\|_2 $ into $ \|\bm{x}\|_1 - \alpha\|x\|_2 $, we have  
	\begin{equation}\label{eq:normieq-half}
		\|\bm{x}\|_1 - \alpha\|\bm{x}\|_2 \ge \|\bm{x}\|_1 - \alpha(\sqrt{n}x_{\min} + \Delta_1).
	\end{equation}
	Since \( \Delta_1 = \|\bm{x}\|_1 - n x_{\min} \), it follows from  \eqref{eq:normieq-half} that  
	\begin{equation}\label{eq:dclowerr1}
		\|\bm{x}\|_1 - \alpha\|\bm{x}\|_2 \ge (n - \alpha\sqrt{n})x_{\min} + (1 - \alpha)(\|\bm{x}\|_1 - n x_{\min}).
	\end{equation}
	
	(b) It is worth noting that $\|\bm{x}\|_1 - \alpha\|\bm{x}\|_2$ depends only on the nonzero components of $ \bm{x} $, that is, $\|\bm{x}\|_1 - \alpha\|\bm{x}\|_2 = \|\bm{x}_\Lambda\|_1 - \alpha\|\bm{x}_\Lambda\|_2$.
	Hence, statement (b) directly follows by applying (a) to $ \bm{x}_\Lambda $.
\end{proof}
\begin{remark}
	The newly derived lower bound is sharper than $(n - \alpha\sqrt{n})x_{\min}$ in~\cite{li2020}.  
	Since $\|\bm{x}\|_1 - n x_{\min} = \Delta_1 \ge 0$ and $1 - \alpha > 0$,  
	the new bound becomes strictly greater than $(n - \alpha\sqrt{n})x_{\min}$  
	whenever the vector $\bm{x}$ is not flat (i.e., $\|\bm{x}\|_1 > n x_{\min}$).  
	The proposed bound explicitly exploits the residual term \(\Delta_1\),  
	which measures the excess of the $\ell_1$-norm over its minimum value $n x_{\min}$,  
	thus providing a sharper estimation.
\end{remark}

\begin{lemma}\label{lem:lowerrboun2}
	Let $\alpha \in (0,1)$ and $\bm{x} \in \mathbb{R}^n \setminus \{\bm{0}\}$ with $\|\bm{x}\|_0 = s$ and $\Lambda = \mathrm{supp}(\bm{x})$. Assume that there exist constants $ C \ge 1 $ and $ p \ge 0 $ such that $|x_i| \ge C i^{-p} \min_j |x_j|$ for all $ i \in \{1, 2, \ldots, n\} $. 	Then, the following statements hold:
	\begin{enumerate}
		\item[(a)] $C x_{\min}(\alpha n^{\frac{1}{2}-p}(n^{\frac{1}{2}}-1) + (1-\alpha) H_{n,p})\le \|\bm{x}\|_1 - \alpha\|\bm{x}\|_2$,
		\item[(b)] $C x_{\min}(\alpha s^{\frac{1}{2}-p}(s^{\frac{1}{2}}-1) + (1-\alpha) H_{s,p})\le \|\bm{x}_\Lambda\|_1 - \alpha\|\bm{x}_\Lambda\|_2$,
	\end{enumerate}
	where $x_{\min}$ is defined as the one in Lemma~\ref{lem:lowerrboun1} and $H_{n,p}=\sum_{i=1}^{n}i^{-p}$.
\end{lemma}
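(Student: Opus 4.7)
The plan is to mimic the structure of the proof of Lemma~\ref{lem:lowerrboun1}, but with the polynomial decay hypothesis $|x_i| \ge C i^{-p} x_{\min}$ replacing the trivial estimate $|x_i| \ge x_{\min}$ used there. As in that proof, I would first assume without loss of generality that $|x_1| \ge |x_2| \ge \cdots \ge |x_n|$, so that $x_{\min} = |x_n|$. Evaluating the decay hypothesis at $i=n$ then forces the compatibility condition $C n^{-p} \le 1$, which will be exploited at the end to recast the right-hand side into the factored form $C x_{\min}(\cdots)$.

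For part (a), I would reuse the intermediate estimate $\|\bm{x}\|_2 \le \sqrt{n}\, x_{\min} + \Delta_1$ already established in the proof of Lemma~\ref{lem:lowerrboun1}(a), where $\Delta_1 = \|\bm{x}\|_1 - n x_{\min}$. Substituting it into $\|\bm{x}\|_1 - \alpha\|\bm{x}\|_2$ and rearranging yields
\begin{equation*}
\|\bm{x}\|_1 - \alpha\|\bm{x}\|_2 \;\ge\; (1-\alpha)\|\bm{x}\|_1 + \alpha \sqrt{n}(\sqrt{n}-1)\, x_{\min}.
\end{equation*}
I would then bound the two summands independently using the decay hypothesis: first, $\|\bm{x}\|_1 = \sum_{i=1}^{n}|x_i| \ge C x_{\min}\sum_{i=1}^{n} i^{-p} = C H_{n,p} x_{\min}$; second, the compatibility $C n^{-p} \le 1$ gives $\sqrt{n}(\sqrt{n}-1)\, x_{\min} \ge C n^{1/2-p}(\sqrt{n}-1)\, x_{\min}$. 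Combining these with weights $(1-\alpha)$ and $\alpha$ respectively produces exactly $C x_{\min}\bigl(\alpha n^{1/2-p}(n^{1/2}-1) + (1-\alpha) H_{n,p}\bigr)$, establishing (a).

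For part (b), I would use that $\|\bm{x}\|_1 - \alpha\|\bm{x}\|_2 = \|\bm{x}_\Lambda\|_1 - \alpha\|\bm{x}_\Lambda\|_2$, so it suffices to apply (a) to the $s$-dimensional vector $\bm{x}_\Lambda$. Under the sorted convention the nonzero entries of $\bm{x}$ occupy the first $s$ indices, and the hypothesis $|x_i| \ge C i^{-p} x_{\min}$ for $i\le s$ is precisely the decay assumption of (a) in dimension $s$, now with $x_{\min} = \min_{j\in\Lambda}|x_j|$. Invoking (a) in dimension $s$ then yields (b) immediately.

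The main obstacle, I expect, is presentational rather than technical: the bound $\alpha \sqrt{n}(\sqrt{n}-1)\, x_{\min}$ that the argument naturally produces is in fact sharper than the $\alpha C n^{1/2-p}(\sqrt{n}-1)\, x_{\min}$ demanded by the statement, so I am intentionally loosening it at the last step purely to obtain the symmetric factored form $C x_{\min}(\cdots)$. I would make this deliberate weakening explicit in the write-up, and possibly add a short remark noting that a sharper but less symmetric variant of the lower bound is available by skipping the final compatibility step.
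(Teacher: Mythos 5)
Your proposal is correct and follows essentially the same route as the paper: you rearrange the lower bound of Lemma~\ref{lem:lowerrboun1} into $(1-\alpha)\|\bm{x}\|_1+\alpha\sqrt{n}(\sqrt{n}-1)x_{\min}$, bound $\|\bm{x}\|_1\ge C x_{\min}H_{n,p}$ via the decay hypothesis, and use $x_{\min}\ge C n^{-p}x_{\min}$ (the hypothesis at $i=n$) to reach the factored form, which is exactly the paper's chain of estimates, with (b) likewise obtained by applying (a) to $\bm{x}_\Lambda$. Your closing remark that the $\alpha\sqrt{n}(\sqrt{n}-1)x_{\min}$ term is deliberately loosened to $\alpha C n^{1/2-p}(\sqrt{n}-1)x_{\min}$ is an accurate observation about the same weakening the paper performs implicitly.
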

\begin{proof}
	(a) Without loss of generality, assume that $ |x_1| \ge |x_2| \ge \cdots \ge |x_n| $, and let $\min\{|x_i|\} = x_{\min}$. According to the assumption, we have $ |x_i| \ge C i^{-p} x_{\min} $ and so
	\begin{equation}\label{eq:L1normpower}
		\|\bm{x}\|_1 = \sum_{i=1}^n |x_i| \ge \sum_{i=1}^n C i^{-p} x_{\min} = C x_{\min} \sum_{i=1}^n i^{-p}.
	\end{equation}
	Let $ H_{n,p} = \sum_{i=1}^n i^{-p} $. Then $\|x\|_1 \ge C x_{\min} H_{n,p}$.
	Lemma~\ref{lem:lowerrboun1} and \eqref{eq:L1normpower} yield $\|\bm{x}\|_{1}-\alpha\|\bm{x}\|_2 \ge (\alpha n - \alpha \sqrt{n}) x_{\min} + (1-\alpha) C x_{\min} H_{n,p}$. It follows from $ x_{\min} \ge  C n^{-p} x_{\min}$ that $(\alpha n - \alpha \sqrt{n}) x_{\min} +\allowbreak (1-\alpha) C x_{\min} H_{n,p} \ge\allowbreak C x_{\min}\!\bigl(\alpha n^{\frac{1}{2}-p}(n^{\frac{1}{2}}-1) +\allowbreak (1-\alpha) H_{n,p}\bigr)$.
	
	(b) Statement (b) directly follows by applying (a) to $ \bm{x}_\Lambda $.
\end{proof}
\begin{remark}
	In Lemma \ref{lem:lowerrboun2}, if the parameters $ C $ and $ p $ are properly chosen,  
	the new lower bound $C x_{\min}\left(\alpha s^{\frac{1}{2}-p}\left(s^{\frac{1}{2}}-1\right) + (1-\alpha) H_{s,p}\right)$ becomes tighter than the bound $(s - \alpha \sqrt{s}) x_{\min}$.  
	For $C = s^p $ with $p\ge 0$, one has $C x_{\min}\bigl(\alpha s^{\frac{1}{2}-p}(s^{\frac{1}{2}}-1) +\allowbreak (1-\alpha) H_{s,p}\bigr) =\allowbreak x_{\min}\bigl(\alpha (s-\sqrt{s}) +\allowbreak (1-\alpha) s^p H_{s,p}\bigr)$.
	Letting	$\Delta L = x_{\min}\bigl(\alpha s - \alpha \sqrt{s} +\allowbreak (1-\alpha) s^p H_{s,p}\bigr) -\allowbreak (s - \alpha \sqrt{s}) x_{\min}$, we have	$\Delta L = (\alpha s - s +\allowbreak (1-\alpha) s^p H_{s,p}) x_{\min} =\allowbreak (1-\alpha)(s^p H_{s,p} - s) x_{\min}$. If $ 0 < \alpha < 1 $ and $ p = 0 $,  then $ s^p H_{s,0} = H_{s,0} = \sum_{i\in \Lambda} \left(\frac{s}{i}\right)^0 = s $ and $\Delta L = 0$. If $ 0 < \alpha < 1 $ and $ p > 0 $, then for any $ i \in \{1, 2, \ldots, s-1\} $,  it holds that $ s/i > 1 $ and $(s/i)^p > 1$. Thus, for any $ s \ge 2 $, one has
	\begin{equation}\label{eq:sumpower}
		\sum_{i\in\Lambda}(\frac{s}{i})^p = (\frac{s}{1})^p + \cdots + (\frac{s}{s-1})^p + (\frac{s}{s})^p > s.
	\end{equation}
	The summation in \eqref{eq:sumpower} includes $ s $ terms, each greater than or equal to 1, with at least $ s-1$ terms strictly greater than 1. Hence, $s^p H_{s,p} = s^p \sum_{i\in\Lambda} i^{-p} = \sum_{i\in\Lambda} \left(\frac{s}{i}\right)^p > s$.
	It follows that $ s^p H_{s,p} - s > 0 $ and so $ \Delta L > 0 $.
\end{remark}
For convenience, we define $\ell(\bm{x})=\frac{1}{2}\|\bm{A}\bm{x} - \bm{b}\|_2^2$ and $r(\bm{x})=\gamma\big(\|\bm{x}\|_1 - \alpha\|\bm{x}\|_2\big)
+ (1-\gamma)\|\bm{x}\|_2^2$.
\begin{lemma}\label{lem:bounderDCEN}
	Let $\alpha \in(0,1)$, $\gamma \in(0,1)$, 
	$\bm{x} \in \mathbb{R}^n \setminus \{\bm{0}\}$ with $\|\bm{x}\|_0 = s$ and support $\Lambda = \mathrm{supp}(\bm{x})$. If there exist constants $C\geq 1$ and $p \geq 0$ such that $|x_i| \geq C\, i^{-p} \min_j |x_j|$ for all $i \in \{1,2,\ldots,n\}$, then the following statements hold:
	\begin{enumerate}
		\item[(a)] If $ C = n^p$ and $p \geq 0$, then 
		\begin{equation}
			\begin{aligned}
				&\gamma \big( \alpha n - \alpha \sqrt{n} + (1-\alpha)n^p H_{n,p} \big) x_{\min} + (1-\gamma)\|\bm{x}\|_2^2  \\
				&\leq r(\bm{x}) \leq \gamma(\sqrt{n} - \alpha)\|\bm{x}\|_2 + (1-\gamma)\|\bm{x}\|_2^2.
			\end{aligned}
		\end{equation}
		\item[(b)] If $\bm{x}$ has sparsity $s$ and $ C = s^p$ with $p \geq 0$, then
		\begin{equation}
			\begin{aligned}
				&\gamma \big( \alpha s - \alpha \sqrt{s} + (1-\alpha)s^p  H_{s,p} \big) x_{\min}+ (1-\gamma)\|\bm{x}\|_2^2 \\
				& \leq r(\bm{x}) \leq \gamma(\sqrt{s} - \alpha)\|\bm{x}\|_2 + (1-\gamma)\|\bm{x}\|_2^2.
			\end{aligned}
		\end{equation}
		\item[(c)] If $\bm{x} $ has sparsity $s$ and $\|\bm{x}\|_2 = r$, then $r(\bm{x}) > 0$ and $r(\bm{x})$ attains its minimum when $s = 1$.
	\end{enumerate}
\end{lemma}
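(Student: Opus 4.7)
The plan is to reduce parts (a) and (b) to direct algebraic consequences of Lemmas \ref{lem:lowerrboun1} and \ref{lem:lowerrboun2}, and then to handle (c) by the standard $\ell_1$--$\ell_2$ comparison together with the fixed-norm assumption $\|\bm{x}\|_2 = r$. Since $f_{\alpha,\gamma}(\bm{x}) = \gamma(\|\bm{x}\|_1 - \alpha\|\bm{x}\|_2) + (1-\gamma)\|\bm{x}\|_2^2$, each bound on the DC piece $\|\bm{x}\|_1 - \alpha\|\bm{x}\|_2$ lifts immediately to a bound on $f_{\alpha,\gamma}$ upon scaling by $\gamma$ and appending the quadratic term $(1-\gamma)\|\bm{x}\|_2^2$.

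For the lower bound in (a), I would substitute $C = n^p$ into Lemma \ref{lem:lowerrboun2}(a): the factor $n^p$ combines with $n^{1/2-p}(n^{1/2}-1)$ to yield $n - \sqrt{n}$, producing the piece $\alpha(n - \sqrt{n})x_{\min}$, while $(1-\alpha) n^p H_{n,p}$ gives the second piece. Multiplying by $\gamma$ and adding $(1-\gamma)\|\bm{x}\|_2^2$ yields the stated lower bound on $f_{\alpha,\gamma}(\bm{x})$. For the upper bound I would apply Lemma \ref{lem:lowerrboun1}(a) to obtain $\|\bm{x}\|_1 - \alpha\|\bm{x}\|_2 \le (\sqrt{n}-\alpha)\|\bm{x}\|_2$ and then tack on $(1-\gamma)\|\bm{x}\|_2^2$. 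Part (b) is structurally identical after the observation (used already in the proof of Lemma \ref{lem:lowerrboun1}(b)) that both $\|\bm{x}\|_1 - \alpha\|\bm{x}\|_2$ and $\|\bm{x}\|_2^2$ are unaffected by the zero entries of $\bm{x}$, so $f_{\alpha,\gamma}(\bm{x}) = f_{\alpha,\gamma}(\bm{x}_\Lambda)$; invoking Lemmas \ref{lem:lowerrboun1}(b) and \ref{lem:lowerrboun2}(b) with $C = s^p$ produces the bounds with $n$ replaced by $s$.

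For part (c), I would use the elementary inequality $\|\bm{x}\|_2 \le \|\bm{x}\|_1$, which holds with equality if and only if $\bm{x}$ has at most one nonzero entry. This yields $\|\bm{x}\|_1 - \alpha\|\bm{x}\|_2 \ge (1-\alpha)\|\bm{x}\|_2 = (1-\alpha) r$, strictly positive because $\alpha \in (0,1)$ and $r > 0$; combined with $(1-\gamma) r^2 > 0$ this gives $f_{\alpha,\gamma}(\bm{x}) > 0$. To locate the minimum, I would compare: when $s = 1$ the inequality is tight so $f_{\alpha,\gamma}(\bm{x}) = \gamma(1-\alpha) r + (1-\gamma) r^2$, whereas for $s \ge 2$ one has $\|\bm{x}\|_1 - \|\bm{x}\|_2 > 0$, forcing $f_{\alpha,\gamma}(\bm{x})$ to strictly exceed this value; meanwhile the quadratic part $(1-\gamma) r^2$ is fixed by the normalization. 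The main obstacle throughout is essentially bookkeeping, namely keeping the substitutions $C = n^p$ and $C = s^p$ organized when inserting them into Lemma \ref{lem:lowerrboun2} and simplifying the resulting powers; no new analytic content beyond the two preceding lemmas and the $\ell_1$--$\ell_2$ norm inequality is required.
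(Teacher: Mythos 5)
Your proposal is correct and follows essentially the same route as the paper: parts (a) and (b) are obtained by substituting $C=n^p$ (resp.\ $C=s^p$) into Lemma~\ref{lem:lowerrboun2} and pairing with the upper bound of Lemma~\ref{lem:lowerrboun1}, and part (c) rests on the $\ell_1$--$\ell_2$ comparison under the fixed-norm constraint $\|\bm{x}\|_2=r$. Your treatment of (c) via $(1-\alpha)r$ and the strict inequality $\|\bm{x}\|_1>\|\bm{x}\|_2$ for $s\ge 2$ is a minor, equally valid variant of the paper's argument.
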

\begin{proof}
	It follows from $\alpha,\gamma \in(0,1)$ that the statements (a) and (b) can be readily verified from Lemma~\ref{lem:lowerrboun2}. 
	We now prove the statement (c). Since $\alpha \in(0,1)$ and $\gamma \in(0,1)$, it follows that $s - \alpha \sqrt{s} \geq 0$ and so $r(\bm{x}) \geq \gamma (s - \alpha \sqrt{s}) x_{\min} + (1-\gamma)\|\bm{x}\|_2^2 > 0$, which shows that $r(\bm{x}) > 0$ for all $\bm{x} \neq 0$. Since $\|\bm{x}\|_2 = r$, one has $r(\bm{x}) = \gamma \|\bm{x}\|_1 - \gamma \alpha r + (1-\gamma) r^2 = \gamma (\|\bm{x}\|_1 - \alpha r) + (1-\gamma) r^2$ and so the minimum of $\|\bm{x}\|_1$ is achieved when the support size satisfies $|\mathrm{supp}(\bm{x})| = s = 1$. In this case, the vector $\bm{x}$ has only one nonzero entry of magnitude $r$, so $\|\bm{x}\|_1 = r$. Hence, $r(\bm{x})$ attains its minimum when $s = 1$.
\end{proof}

Building upon the RIP-based analysis framework in~\cite{yin2015minimization,candes2005error}, which established sufficient conditions for the exact recovery of the $\ell_{1}-\ell_{2}$ minimization and BP problem, we extend such a framework to CDCEN under the distinct condition.
\begin{theorem}\label{eq:ExtraRecovery}
	Let $\alpha \in (0,1)$, $\gamma \in (0,1)$, and $p \geq 0$.  Suppose that all the assumptions of Lemma~\ref{lem:lowerrboun2} hold with $C = (3s)^p$. 
	Let $\bm{\bar{x}} \in \mathbb{R}^n \setminus \{\bm{0}\}$ be an $s$-sparse signal, i.e., $\|\bm{\bar{x}}\|_0 = s$ and $\|\bm{\bar{x}}\|_2\leq R (R>0)$, such that
	\begin{equation*}
		a(s,\gamma,\alpha,p) = \left(\frac{\gamma(\alpha\sqrt{3s} - \alpha + p(\alpha,s))}{\gamma\sqrt{s} + \gamma\alpha + 2(1-\gamma) R}\right)^2 > 1,
	\end{equation*}
	where $p(\alpha,s) = (1-\alpha)(3s)^p H_{3s,p} / \sqrt{3s}$. If the sensing matrix $\bm{A}$ satisfies the RIP condition $\delta_{3s} + a(s,\gamma,\alpha,p)\,\delta_{4s} < a(s,\gamma,\alpha,p) - 1$ and $\bm{A}\bm{\bar{x}} = \bm{b}$, then $\bm{\bar{x}}$ is a unique solution to the optimization problem \eqref{eq:constprobm}.
\end{theorem}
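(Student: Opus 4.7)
My plan is to argue by contradiction. Suppose $\bm{x}^* \neq \bm{\bar{x}}$ is another solution of the constrained problem~\eqref{eq:constprobm}, and set $\bm{h} := \bm{x}^* - \bm{\bar{x}} \neq \bm{0}$, so that $\bm{A}\bm{h} = \bm{0}$. Let $S := \mathrm{supp}(\bm{\bar{x}})$. First I will convert the optimality inequality $f_{\alpha,\gamma}(\bm{x}^*) \leq f_{\alpha,\gamma}(\bm{\bar{x}})$ into a cone-type condition on $\bm{h}$. Splitting $\bm{x}^* = \bm{\bar{x}} + \bm{h}$ across $S$ and $S^c$, the reverse triangle inequality gives $\|\bm{x}^*\|_1 \geq \|\bm{\bar{x}}\|_1 + \|\bm{h}_{S^c}\|_1 - \|\bm{h}_S\|_1$; the ordinary triangle inequality with split $\bm{h} = \bm{h}_S + \bm{h}_{S^c}$ gives $\|\bm{x}^*\|_2 \leq \|\bm{\bar{x}}\|_2 + \|\bm{h}_S\|_2 + \|\bm{h}_{S^c}\|_2$; and expanding $\|\bm{x}^*\|_2^2$, using $\langle\bm{\bar{x}},\bm{h}\rangle = \langle\bm{\bar{x}},\bm{h}_S\rangle$ together with Cauchy--Schwarz and $\|\bm{\bar{x}}\|_2 \leq M$, gives $\|\bm{x}^*\|_2^2 \geq \|\bm{\bar{x}}\|_2^2 - 2M\|\bm{h}_S\|_2$. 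Combining these with $\|\bm{h}_S\|_1 \leq \sqrt{s}\|\bm{h}_S\|_2$ will produce
\begin{equation*}
\gamma\bigl(\|\bm{h}_{S^c}\|_1 - \alpha\|\bm{h}_{S^c}\|_2\bigr) \leq D\,\|\bm{h}_S\|_2, \qquad D := \gamma\sqrt{s} + \gamma\alpha + 2(1-\gamma)M,
\end{equation*}
which is exactly the denominator of $\sqrt{a(s,\gamma,\alpha,p)}$.

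Next I will invoke the classical RIP tube argument. Sort the entries of $\bm{h}$ on $S^c$ by decreasing magnitude and partition them into disjoint blocks $T_1,T_2,\ldots$ of size $3s$, and set $T_0 := S$. The pointwise inequality $\|\bm{h}_{T_i}\|_2 \leq \|\bm{h}_{T_{i-1}}\|_1/\sqrt{3s}$ for $i \geq 2$, combined with $\bm{A}\bm{h}_{T_0 \cup T_1} = -\sum_{i \geq 2}\bm{A}\bm{h}_{T_i}$ and the RIP of orders $3s$ and $4s$ on the two sides, yields
\begin{equation*}
\|\bm{h}_S\|_2 \leq \|\bm{h}_{T_0 \cup T_1}\|_2 \leq k\,\frac{\|\bm{h}_{S^c}\|_1}{\sqrt{3s}}, \qquad k := \sqrt{\frac{1+\delta_{3s}}{1-\delta_{4s}}}.
\end{equation*}

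The technical heart of the proof will be a sharp lower bound on the DC measure of the tail matching the numerator of $\sqrt{a(s,\gamma,\alpha,p)}$. I will restrict attention to the top block $\bm{h}_{T_1}$, discard the non-negative contributions $\|\bm{h}_{T_i}\|_1 - \alpha\|\bm{h}_{T_i}\|_2 \geq 0$ for $i \geq 2$, and apply Lemma~\ref{lem:lowerrboun2} to $\bm{h}_{T_1}$ with $C = (3s)^p$ under the decay hypothesis; using the block partition to relate the minimum entry of $\bm{h}_{T_1}$ to the tail mass $\|\bm{h}_{S^c}\|_1/\sqrt{3s}$ should then produce
\begin{equation*}
\|\bm{h}_{S^c}\|_1 - \alpha\|\bm{h}_{S^c}\|_2 \geq c\,\frac{\|\bm{h}_{S^c}\|_1}{\sqrt{3s}}, \qquad c := \alpha\sqrt{3s} - \alpha + (1-\alpha)(3s)^p H_{3s,p}/\sqrt{3s}.
\end{equation*}
I expect this to be the main obstacle, since it is where the polynomial decay with $C = (3s)^p$ does its work and converts the naive coefficient $(1-\alpha)$ into the sharper factor that makes $a(s,\gamma,\alpha,p) > 1$ an attainable condition. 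Chaining the three displays then gives $\gamma c\,\|\bm{h}_{S^c}\|_1/\sqrt{3s} \leq Dk\,\|\bm{h}_{S^c}\|_1/\sqrt{3s}$, i.e.\ $k^2 \geq (\gamma c/D)^2 = a(s,\gamma,\alpha,p)$, whereas the RIP hypothesis $\delta_{3s} + a\,\delta_{4s} < a - 1$ rearranges precisely to $k^2 < a(s,\gamma,\alpha,p)$, the desired contradiction. It follows that $\|\bm{h}_{S^c}\|_1 = 0$, after which $\bm{h}$ is $s$-sparse with $\bm{A}\bm{h} = \bm{0}$ and the RIP forces $\bm{h} = \bm{0}$, establishing uniqueness.
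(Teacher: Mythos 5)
Your overall skeleton (perturb by $\bm{h}\in\ker(\bm{A})$, derive the cone condition $\gamma(\|\bm{h}_{S^c}\|_1-\alpha\|\bm{h}_{S^c}\|_2)\le D\|\bm{h}_S\|_2$, partition the tail into sorted blocks of size $3s$, apply the RIP, and close via the rearrangement $k^2<a$) is the paper's route, and those parts are sound. The genuine gap is exactly at the step you call the technical heart: the claimed bound $\|\bm{h}_{S^c}\|_1-\alpha\|\bm{h}_{S^c}\|_2\ge c\,\|\bm{h}_{S^c}\|_1/\sqrt{3s}$ is false in general and cannot be reached the way you describe. Since $(3s)^pH_{3s,p}\ge 3s$, your inequality would imply $\|\bm{h}_{S^c}\|_1\ge\sqrt{3s}\,\|\bm{h}_{S^c}\|_2$; already for $p=0$ (where $C=(3s)^p=1$ makes the decay hypothesis vacuous, so nothing constrains $\bm{h}$) this fails whenever the tail of $\bm{h}$ is concentrated on fewer than $3s$ comparable entries, e.g.\ a one-sparse $\bm{h}_{S^c}$. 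Moreover, the bridge you propose, relating the minimum entry of $\bm{h}_{T_1}$ to the tail mass, goes in the wrong direction: sorting gives $h_{\min,T_1}\le\|\bm{h}_{T_1}\|_1/(3s)\le\|\bm{h}_{S^c}\|_1/(3s)$, an upper bound, whereas your chain needs a matching lower bound; discarding the blocks $T_i$, $i\ge2$, throws away precisely the mass that makes such a lower bound impossible. Replacing the RIP tail sum by the crude estimate $\sum_{i\ge2}\|\bm{h}_{T_i}\|_2\le\|\bm{h}_{S^c}\|_1/\sqrt{3s}$ is what forces you into this untenable inequality.

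The constant $c$ is attainable only through block-to-block chaining, which is how the paper proceeds: apply Lemma~\ref{lem:lowerrboun2} (with $C=(3s)^p$) to \emph{each} block $\bm{h}_{T_{i-1}}$ to get $h_{\min,T_{i-1}}\le\bigl(\|\bm{h}_{T_{i-1}}\|_1-\alpha\|\bm{h}_{T_{i-1}}\|_2\bigr)/(\sqrt{3s}\,c)$, note that by the sorted partition every entry of $T_i$ is at most $h_{\min,T_{i-1}}$, hence $\|\bm{h}_{T_i}\|_2\le\sqrt{3s}\,h_{\min,T_{i-1}}$, and sum over $i\ge2$; using $\sum_i\bigl(\|\bm{h}_{T_i}\|_1-\alpha\|\bm{h}_{T_i}\|_2\bigr)\le\|\bm{h}_{S^c}\|_1-\alpha\|\bm{h}_{S^c}\|_2$ this yields $\sum_{i\ge2}\|\bm{h}_{T_i}\|_2\le\bigl(\|\bm{h}_{S^c}\|_1-\alpha\|\bm{h}_{S^c}\|_2\bigr)/c$, so the sharp constant is paid directly on the RIP tail term rather than through a global inequality on the tail. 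Combining this with your cone condition gives $\sum_{i\ge2}\|\bm{h}_{T_i}\|_2\le\|\bm{h}_S\|_2/\sqrt{a}$, and your RIP contradiction and endgame then go through unchanged (the paper carries the extra nonnegative $(1-\gamma)\|\cdot\|_2^2$ terms along, but your weakened cone condition suffices). Be aware that this repaired step, like the paper's proof, implicitly requires the decay hypothesis of Lemma~\ref{lem:lowerrboun2} to hold for the tail blocks of $\bm{h}$, not merely for $\bm{\bar{x}}$; your write-up leaves this unstated as well.
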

\begin{proof}
	Let $\bm{x}$ be any feasible solution of \eqref{eq:constprobm} satisfying the linear constraint $\bm{Ax} = \bm{b}$ and has a smaller or equal function value, i.e., $r(\bm{x}) \le r(\bar{\bm{x}})$. Decomposing $\bm{x}$ as $\bm{x} = \bar{\bm{x}} + \bm{h}$ with $\bm{h} \in \ker(\bm{A})$, we show that $\bm{h} = \bm{0}$. To this end, let $\Lambda = \operatorname{supp}(\bar{\bm{x}})$ denote the support set of $\bar{\bm{x}}$, and decompose $\bm{h}$ into $\bm{h} = \bm{h}_{\Lambda} + \bm{h}_{\Lambda^c}$, 
	where $\Lambda^c$ denotes the complement of $\Lambda$. Then
	\begin{equation}\label{eq:term0}
		\begin{aligned}
			\gamma (\|\bm{x}\|_1 &- \alpha \|\bm{x}\|_2) + (1-\gamma)\|\bm{x}\|_2^2\\ &\le \gamma (\|\bar{\bm{x}}\|_1 - \alpha\|\bar{\bm{x}}\|_2) + (1-\gamma)\|\bar{\bm{x}}\|_2^2.
		\end{aligned}
	\end{equation}
	For the $\ell_1$-norm term $\|\bar{\bm{x}} + \bm{h}_{\Lambda} + \bm{h}_{\Lambda^c}\|_1$, one has
	\begin{equation}\label{eq:term1}
		\begin{aligned}
			\|\bar{\bm{x}} + \bm{h}_{\Lambda} + \bm{h}_{\Lambda^c}\|_1 &= \|\bar{\bm{x}} + \bm{h}_{\Lambda}\|_1 + \|\bm{h}_{\Lambda^c}\|_1 \\
			&\ge \|\bar{\bm{x}}\|_1 - \|\bm{h}_{\Lambda}\|_1 + \|\bm{h}_{\Lambda^c}\|_1.
		\end{aligned}
	\end{equation}
	Since $\alpha \in (0,1)$, the $\ell_2$-norm term $\alpha\|\bar{\bm{x}} + \bm{h}_{\Lambda} + \bm{h}_{\Lambda^c}\|_2$ satisfies
	\begin{equation}\label{eq:term2}
		\alpha \|\bar{\bm{x}} + \bm{h}_{\Lambda} + \bm{h}_{\Lambda^c}\|_2 \le \alpha (\|\bar{\bm{x}}\|_2 + \|\bm{h}_{\Lambda}\|_2 + \|\bm{h}_{\Lambda^c}\|_2).
	\end{equation}
	Moreover,
	\begin{equation}\label{eq:term3}
		(1-\gamma)\|\bar{\bm{x}} + \bm{h}_{\Lambda} + \bm{h}_{\Lambda^c}\|_2^2 = (1-\gamma)(\|\bar{\bm{x}}+\bm{h}_{\Lambda}\|_2^2 + \|\bm{h}_{\Lambda^c}\|_2^2).
	\end{equation}
	Substituting \eqref{eq:term1},\eqref{eq:term2},and \eqref{eq:term3} into the inequality \eqref{eq:term0} yields
	\begin{equation}\label{eq:term4}
		\begin{aligned}
			&\gamma (\|\bar{\bm{x}} + \bm{h}_{\Lambda} + \bm{h}_{\Lambda^c}\|_1 - \alpha\|\bar{\bm{x}} + \bm{h}_{\Lambda} + \bm{h}_{\Lambda^c}\|_2) +\\ &(1-\gamma)\|\bar{\bm{x}} + \bm{h}_{\Lambda} + \bm{h}_{\Lambda^c}\|_2^2 \ge \gamma [\|\bar{\bm{x}}\|_1 - \|\bm{h}_{\Lambda}\|_1 \\
				& + \|\bm{h}_{\Lambda^c}\|_1 - \alpha(\|\bar{\bm{x}}\|_2 + \|\bm{h}_{\Lambda}\|_2 + \|\bm{h}_{\Lambda^c}\|_2)] \\
			& \quad \quad \quad \quad \;\;+ (1-\gamma)(\|\bar{\bm{x}}+\bm{h}_{\Lambda}\|_2^2 + \|\bm{h}_{\Lambda^c}\|_2^2).
		\end{aligned}
	\end{equation}
	It follows from \eqref{eq:term4} that $\gamma \left[\|\bar{\bm{x}}\|_1 - \|\bm{h}_{\Lambda}\|_1 + \|\bm{h}_{\Lambda^c}\|_1 - \alpha(\|\bar{\bm{x}}\|_2 + \|\bm{h}_{\Lambda}\|_2 + \|\bm{h}_{\Lambda^c}\|_2)\right]+ (1-\gamma)(\|\bar{\bm{x}}+\bm{h}_{\Lambda}\|_2^2 + \|\bm{h}_{\Lambda^c}\|_2^2) \leq \gamma (\|\bar{\bm{x}}\|_1 - \alpha\|\bar{\bm{x}}\|_2) + (1-\gamma)\|\bar{\bm{x}}\|_2^2$. Simplifying and rearranging yields $\gamma\left[-\|\bm{h}_{\Lambda}\|_1 + \|\bm{h}_{\Lambda^c}\|_1 - \alpha(\|\bm{h}_{\Lambda}\|_2 + \|\bm{h}_{\Lambda^c}\|_2)\right] + (1-\gamma)(\|\bar{\bm{x}}+\bm{h}_{\Lambda}\|_2^2 + \|\bm{h}_{\Lambda^c}\|_2^2) \le (1-\gamma)\|\bar{\bm{x}}\|_2^2$. By the Cauchy–Schwarz inequality, we have $\|\bar{\bm{x}} + \bm{h}_{\Lambda}\|_2^2 \ge (\|\bar{\bm{x}}\|_2 - \|\bm{h}_{\Lambda}\|_2)^2$ and so
	\begin{equation}
		\begin{aligned}
			\gamma\bigl[ & -\|\bm{h}_{\Lambda}\|_1 + \|\bm{h}_{\Lambda^c}\|_1 - \alpha(\|\bm{h}_{\Lambda}\|_2 + \|\bm{h}_{\Lambda^c}\|_2) \bigr] \\ 
			&+  (1-\gamma)(\|\bar{\bm{x}}+\bm{h}_{\Lambda}\|_2^2 + \|\bm{h}_{\Lambda^c}\|_2^2) \\
			&\geq \gamma\bigl[ -\|\bm{h}_{\Lambda}\|_1 + \|\bm{h}_{\Lambda^c}\|_1 - \alpha(\|\bm{h}_{\Lambda}\|_2 + \|\bm{h}_{\Lambda^c}\|_2) \bigr] \\
			& \quad +(1-\gamma)\bigl( \|\bar{\bm{x}}\|_2^2 + \|\bm{h}_{\Lambda^c}\|_2^2 - 2\|\bar{\bm{x}}\|_2\|\bm{h}_{\Lambda^c}\|_2 \bigr).
		\end{aligned}
	\end{equation}
	Simplifying and rearranging yields $\gamma \|\bm{h}_{\Lambda^c}\|_1 - \gamma \alpha \|\bm{h}_{\Lambda^c}\|_2 + (1-\gamma)\|\bm{h}_{\Lambda^c}\|_2^2 \le \gamma \|\bm{h}_{\Lambda}\|_1 + \gamma \alpha \|\bm{h}_{\Lambda}\|_2 + (\gamma - 1)(\|\bm{h}_{\Lambda}\|_2^2 - 2\|\bar{\bm{x}}\|_2\|\bm{h}_{\Lambda}\|_2)$. Since $0 < \gamma < 1$, we further obtain
	\begin{equation}\label{eq:finbound}
		\begin{aligned}
			\|\bm{h}_{\Lambda^c}\|_1 &- \alpha\|\bm{h}_{\Lambda^c}\|_2 + \frac{1-\gamma}{\gamma}(\|\bm{h}_{\Lambda^c}\|_2^2+ \|\bm{h}_{\Lambda}\|_2^2 \\ 
			&- 2\|\bar{\bm{x}}\|_2\|\bm{h}_{\Lambda}\|_2) 	\le \|\bm{h}_{\Lambda}\|_1 + \alpha\|\bm{h}_{\Lambda}\|_2.
		\end{aligned}
	\end{equation}
	We sort the components of $\Lambda^c$ in descending order by magnitude and divide them into disjoint subsets $\Lambda_i^c$ with cardinality $3s$ (the last one possibly smaller), i.e.,
	$\Lambda^c = \bigcup_{i=1}^{\ell}\Lambda_i^c$ and $\Lambda_i^c \cap \Lambda_j^c = \emptyset$ for $i\ne j$. Let $\Lambda_0^c = \Lambda \cup \Lambda_1^c$. Using the RIP property of $\bm{A}$ and $\bm{A}\bm{h} = 0$, we have	$0 = \|\bm{Ah}\|_2  \geq \left\| \bm{A}_{\Lambda_0^c} \bm{h}_{\Lambda_0^c} \right\|_2 - \left\| \sum_{i=2}^\ell \bm{A}_{\Lambda_i^c} \bm{h}_{\Lambda_i^c} \right\|_2 \geq \left\| \bm{A}_{\Lambda_0^c} \bm{h}_{\Lambda_0^c} \right\|_2 - \sum_{i=2}^\ell \left\| \bm{A}_{\Lambda_i^c} \bm{h}_{\Lambda_i^c} \right\|_2$. Set  $A \coloneqq \sqrt{1 - \delta_{4s}}$ and $B \coloneqq \sqrt{1 + \delta_{3s}}$. It follows from the RIP condition that $0 = \|\bm{Ah}\|_2 \ge A\|\bm{h}_{\Lambda_0^c}\|_2 - B\sum_{i=2}^{\ell}\|\bm{h}_{\Lambda_i^c}\|_2$. By the statement (b) in Lemma~\ref{lem:bounderDCEN}, for any $i \ge 2$ and $t \in \Lambda_i^c$, one has
	\begin{equation*}
		\begin{aligned}
			|h_t| + &\frac{(1-\gamma)\|\bm{h}_{\Lambda_i^c}\|_2^2}{\gamma(3\alpha s - \alpha\sqrt{3s} + (1-\alpha)3^ps^pH_{3s,p})}\\
			&\leq \frac{\gamma\left( \|\bm{h}_{\Lambda_{i-1}^c}\|_2 - \alpha \|\bm{h}_{\Lambda_{i-1}^c}\|_2 \right) + (1-\gamma)\|\bm{h}_{\Lambda_{i-1}^c}\|_2^2}{\gamma(3\alpha s - \alpha\sqrt{3s} + (1-\alpha)3^ps^pH_{3s,p})}.
		\end{aligned}
	\end{equation*}
	Thus, for any $i \ge 2$, $\|\bm{h}_{\Lambda_i^c}\|_2^2$ admits the upper bound
	\begin{equation*}
		\|\bm{h}_{\Lambda_i^c}\|_2^2 \le 3s\left[\frac{\gamma(\|\bm{h}_{\Lambda_{i-1}^c}\|_1 - \alpha\|\bm{h}_{\Lambda_{i-1}^c}\|_2) + (1-\gamma)\|\bm{h}_{\Lambda_{i-1}^c}\|_2^2}{\gamma(3\alpha s - \alpha \sqrt{3s} + (1-\alpha)3^ps^p H_{3s,p})}\right]^2.
	\end{equation*}
	Summing over $i \ge 2$ and simplifying yields
	\begin{equation}\label{eq:simpsum}
		\begin{aligned}
			&\sum_{i=2}^\ell \|\bm{h}_{\Lambda_i^c}\|_2\\
			&\leq \sqrt{3s} \cdot \frac{ \gamma \sum_{i=1}^{\ell} \|\bm{h}_{\Lambda_i^c}\|_1 - \gamma \alpha \ell_{h^c} + (1-\gamma) \sum_{i=1}^{\ell} \|\bm{h}_{\Lambda_i^c}\|_2^2 }{ \gamma \left( 3\alpha s - \alpha\sqrt{3s} + (1-\alpha) \, 3^p s^p H_{3s,p} \right) },
		\end{aligned}
	\end{equation}
	where $\ell_{h^c} = \sum_{i=1}^{\ell} \|\bm{h}_{\Lambda_i^c}\|_2$. By $\sum_{i=1}^\ell \|\bm{h}_{\Lambda_i^c}\|_1 = \|\bm{h}_{\Lambda^c}\|_1$, $(1-\gamma) \sum_{i=1}^\ell \|\bm{h}_{\Lambda_i^c}\|_2^2 = (1-\gamma) \|\bm{h}_{\Lambda^c}\|_2^2$, and $\gamma \alpha \sum_{i=1}^\ell \|\bm{h}_{\Lambda_i^c}\|_2 \geq \gamma \alpha \sqrt{ \sum_{i=1}^\ell \|\bm{h}_{\Lambda_i^c}\|_2^2 } = \gamma \alpha \|\bm{h}_{\Lambda^c}\|_2$, and substitute \eqref{eq:finbound} into \eqref{eq:simpsum} to obtain 
	\begin{equation*}
		\begin{aligned}
			\sum_{i=2}^\ell \|\bm{h}_{\Lambda_i^c}\|_2 
			\overset{b}{\leq} \frac{ \left( \gamma\sqrt{s} + \gamma\alpha + 2(1-\gamma) R \right) \|\bm{h}_\Lambda\|_2 }{ \gamma \left( \alpha \sqrt{3s} - \alpha + (1-\alpha) 3^ps^p H_{3s,p} / \sqrt{3s} \right) },
		\end{aligned}
	\end{equation*}
	where inequality $\overset{a}{\leq}$ holds due to condition $\gamma \|\bm{h}_{\Lambda}\|_1 \leq \gamma \sqrt{s}\|\bm{h}_{\Lambda}\|_2$ and inequality $\overset{b}{\leq}$ follows from the boundedness of the signal $\bm{\bar{x}}$, i.e., $\|\bm{\bar{x}}\|_2 \le R$. Let
	\begin{equation*}
		\sqrt{a(s,\gamma,\alpha,p)} = \frac{\gamma(\alpha\sqrt{3s} - \alpha + (1-\alpha)(3s)^p H_{3s,p} / \sqrt{3s})}{\gamma\sqrt{s} + \gamma\alpha + 2(1-\gamma) R}.
	\end{equation*}
	Then $\sum_{i=2}^\ell \|\bm{h}_{\Lambda_i^c}\|_2 \leq \frac{\|\bm{h}_\Lambda\|_2}{\sqrt{a(s, \gamma, \alpha, p)}}$. 	It follows that $0 \geq A \|\bm{h}_{\Lambda_0^c}\|_2 - B \sum_{i=2}^\ell \|\bm{h}_{\Lambda_i^c}\|_2 \geq A \|\bm{h}_{\Lambda_0^c}\|_2 - \frac{B}{\sqrt{a(s, \gamma, \alpha, p)}} \|\bm{h}_\Lambda\|_2 \overset{a}{\geq} \left( A - \frac{B}{\sqrt{a(s, \lambda, \alpha, p)}} \right) \|\bm{h}_{\Lambda_0^c}\|_2$, where the inequality $\overset{a}{\geq}$ holds because $\|\bm{h}_{\Lambda_0^c}\|_2^2 = \|\bm{h}_\Lambda + \bm{h}_{\Lambda_1^c}\|_2^2 \geq \|\bm{h}_\Lambda\|_2^2$. Since $a(s,\gamma,\alpha,p)>1$ and $\delta_{3s} + a(s, \gamma, \alpha, p)\delta_{4s} < a(s, \gamma, \alpha, p) - 1$, it follows that $\sqrt{1-\delta_{4s}} - \frac{\sqrt{1+\delta_{3s}}}{\sqrt{a(s,\lambda,\alpha,p)}} > 0$, which implies $\bm{h}_{\Lambda_0^c} = \bm{0}$ and so $\bm{h} = \bm{0}$. This proves the uniqueness of $\bar{\bm{x}}$.
\end{proof}

Theorem \ref{eq:ExtraRecovery} establishes a RIP–based sufficient condition ensuring the uniqueness of the solution recovered by the measure $\gamma\left(\|\bm{x}\|_1 - \alpha\|\bm{x}\|_2\right) + \left(1-\gamma\right)\|\bm{x}\|_2^2$. This result parallels the classical RIP condition for exact recovery in BP~\cite{candes2005error}, yet it is specifically tailored to the composite nonconvex objective above. Unlike the $\ell_1$ formulation in BP, this condition explicitly characterizes the interplay between the sparsity-inducing term $\|\bm{x}\|_1 - \alpha\|\bm{x}\|_2$ and the quadratic regularization $\|\bm{x}\|_2^2$. 

At the end of this subsection,  we extend the stable recovery property of \cite{yin2015minimization} to demonstrate the stable recovery property of CDCEN in the presence of measurement noise.
\begin{theorem}\label{thm:roubsterrbound}
	Let $\alpha \in (0,1)$ and $\gamma \in (0,1)$, and suppose that all the conditions in Theorem~\ref{eq:ExtraRecovery} are satisfied with $ \bm{A}\bm{\bar{x}} + \bm{e}=\bm{b}$,  where $\bm{e} \in \mathbb{R}^m$ denotes an arbitrary noise with $\|\bm{e}\|_2 \leq \tau$. Let $\bm{x}^{\mathrm{opt}}$ be an optimal solution of the noise-aware problem
	\begin{equation*}\label{eq:robust_dcen}
		\min_{\bm{x} \in \mathbb{R}^n}
		r(\bm{x}) \quad \text{subject to} \quad
		\|\bm{A}\bm{x} - \bm{b}\|_2 \leq \tau .
	\end{equation*}
	Then the reconstruction error admits the stability bound $\|\bm{x}^{\mathrm{opt}} - \bm{\bar{x}}\|_2 \leq C_s \, \tau$, where the constant $C_s$ is given by $C_s := \frac{2\sqrt{1 + a(s, \gamma, \alpha, p)}}{\sqrt{a(s, \gamma, \alpha, p)(1 - \delta_{4s})} - \sqrt{1 + \delta_{3s}}} > 0$. 
\end{theorem}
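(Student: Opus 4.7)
The plan is to mimic the proof of Theorem~\ref{eq:ExtraRecovery} but replace the identity $\bm{A}\bm{h}=\bm{0}$ by the residual bound $\|\bm{A}\bm{h}\|_2\le 2\tau$, where $\bm{h}:=\bm{x}^{\mathrm{opt}}-\bar{\bm{x}}$. First, since $\bar{\bm{x}}$ is feasible for the noise-aware problem (because $\|\bm{A}\bar{\bm{x}}-\bm{b}\|_2=\|\bm{e}\|_2\le\tau$), the optimality of $\bm{x}^{\mathrm{opt}}$ yields the same objective inequality $\gamma(\|\bm{x}^{\mathrm{opt}}\|_1-\alpha\|\bm{x}^{\mathrm{opt}}\|_2)+(1-\gamma)\|\bm{x}^{\mathrm{opt}}\|_2^2\le\gamma(\|\bar{\bm{x}}\|_1-\alpha\|\bar{\bm{x}}\|_2)+(1-\gamma)\|\bar{\bm{x}}\|_2^2$ used in Theorem~\ref{eq:ExtraRecovery}. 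Splitting $\bm{h}=\bm{h}_\Lambda+\bm{h}_{\Lambda^c}$ with $\Lambda=\mathrm{supp}(\bar{\bm{x}})$, the same triangle-inequality manipulations of \eqref{eq:term1}--\eqref{eq:term4} together with $\|\bar{\bm{x}}+\bm{h}_\Lambda\|_2^2\ge(\|\bar{\bm{x}}\|_2-\|\bm{h}_\Lambda\|_2)^2$ deliver the cone inequality \eqref{eq:finbound} verbatim.

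Next, I would reuse the block decomposition: sort $\Lambda^c$ by decreasing magnitude, partition into blocks $\Lambda_i^c$ of size $3s$, and set $\Lambda_0^c=\Lambda\cup\Lambda_1^c$. The bound on tail blocks derived via Lemma~\ref{lem:bounderDCEN}(b) and \eqref{eq:finbound} is purely algebraic and independent of whether $\bm{A}\bm{h}=\bm{0}$, so the estimate
\begin{equation*}
\sum_{i=2}^{\ell}\|\bm{h}_{\Lambda_i^c}\|_2 \le \frac{\|\bm{h}_\Lambda\|_2}{\sqrt{a(s,\gamma,\alpha,p)}}
\end{equation*}
carries over unchanged. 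Invoking RIP with $A=\sqrt{1-\delta_{4s}}$ and $B=\sqrt{1+\delta_{3s}}$, but now with $\|\bm{A}\bm{h}\|_2\le 2\tau$ instead of zero, I get
\begin{equation*}
2\tau \ge \|\bm{A}\bm{h}\|_2 \ge A\|\bm{h}_{\Lambda_0^c}\|_2 - B\sum_{i\ge 2}\|\bm{h}_{\Lambda_i^c}\|_2 \ge \Bigl(A - \tfrac{B}{\sqrt{a(s,\gamma,\alpha,p)}}\Bigr)\|\bm{h}_{\Lambda_0^c}\|_2,
\end{equation*}
where the last step uses $\|\bm{h}_\Lambda\|_2\le\|\bm{h}_{\Lambda_0^c}\|_2$. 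The assumption $\delta_{3s}+a\delta_{4s}<a-1$ makes the bracket strictly positive, yielding a bound $\|\bm{h}_{\Lambda_0^c}\|_2\le \tfrac{2\tau\sqrt{a}}{\sqrt{a(1-\delta_{4s})}-\sqrt{1+\delta_{3s}}}$.

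Finally, I need to pass from $\|\bm{h}_{\Lambda_0^c}\|_2$ to the full error $\|\bm{h}\|_2$. Writing $\|\bm{h}\|_2^2=\|\bm{h}_{\Lambda_0^c}\|_2^2+\|\bm{h}_{(\Lambda_0^c)^c}\|_2^2$ and using $\|\bm{h}_{(\Lambda_0^c)^c}\|_2\le\sum_{i\ge 2}\|\bm{h}_{\Lambda_i^c}\|_2\le\|\bm{h}_\Lambda\|_2/\sqrt{a}\le\|\bm{h}_{\Lambda_0^c}\|_2/\sqrt{a}$, I obtain $\|\bm{h}\|_2\le\sqrt{1+1/a}\,\|\bm{h}_{\Lambda_0^c}\|_2=\tfrac{\sqrt{1+a}}{\sqrt{a}}\|\bm{h}_{\Lambda_0^c}\|_2$. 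Substituting the bound from the previous step produces exactly the claimed constant $C_s=\tfrac{2\sqrt{1+a(s,\gamma,\alpha,p)}}{\sqrt{a(s,\gamma,\alpha,p)(1-\delta_{4s})}-\sqrt{1+\delta_{3s}}}$.

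The only nontrivial step is the last aggregation: the cone/support argument yields a clean control on the dominant block $\bm{h}_{\Lambda_0^c}$, but the tail $\bm{h}_{(\Lambda_0^c)^c}$ has to be absorbed through the same block inequality that drove the proof of Theorem~\ref{eq:ExtraRecovery}. Once the chain $\|\bm{h}_{(\Lambda_0^c)^c}\|_2\le\|\bm{h}_\Lambda\|_2/\sqrt{a}\le\|\bm{h}_{\Lambda_0^c}\|_2/\sqrt{a}$ is in place, the $\sqrt{1+a}$ factor appears naturally and the rest is routine arithmetic.
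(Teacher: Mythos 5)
Your proposal is correct and is essentially the argument the paper intends: the paper omits the proof, deferring to Theorem~2.2 of the $\ell_{1-2}$ paper~\cite{yin2015minimization}, whose stable-recovery argument is exactly this perturbation of the exact-recovery proof, replacing $\bm{A}\bm{h}=\bm{0}$ by $\|\bm{A}\bm{h}\|_2\le 2\tau$ via feasibility of $\bar{\bm{x}}$ and the triangle inequality. Your final aggregation step $\|\bm{h}\|_2\le\sqrt{1+1/a}\,\|\bm{h}_{\Lambda_0^c}\|_2$ combined with $\|\bm{h}_{\Lambda_0^c}\|_2\le 2\tau\sqrt{a}/\bigl(\sqrt{a(1-\delta_{4s})}-\sqrt{1+\delta_{3s}}\bigr)$ reproduces the stated constant $C_s$ exactly.
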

\begin{proof}
	The proof is similar to the one of \cite[Theorem 2.2]{yin2015minimization} and
	so we omit it here.
\end{proof}

\section{Consistency Theory for DCEN}
In this section, we study the consistency theory of DCEN. Let $\bar{\bm{x}}$ be the ground true $s$-sparse solution with support $\mathcal{S} := \{i : \bar{x}_i \neq 0\}$ and $s := |\mathcal{S}|$. We define the level set of the objective function at $\bar{\bm{x}}$ as $\text{lev}_h(\bar{\bm{x}}) := \{\bm{x} \in \mathbb{R}^n : h(\bm{x}) \le h(\bar{\bm{x}})\}$. Associated with the proposed regularization, we introduce an extended notion of the Restricted Eigenvalue Condition (REC).

\begin{definition}[DCEN-REC$(s, t, \beta)$]\label{def:extended_rec}
	Let $\beta >0$. The matrix $\bm{A} \in \mathbb{R}^{m \times n}$ is said to satisfy the restricted eigenvalue condition relative to the DCEN penalty and $(s, t)$, denoted as DCEN-REC$(s, t, \beta)$, if
	\begin{equation}
		\begin{split}
			\phi_\beta(s, t) := \min \bigg\{ & \frac{\|\bm{A}\bm{x}\|_2}{\|\bm{x}_{\mathcal{J}}\|_2} : |\mathcal{I}| \le s, \|\bm{x}_{\mathcal{I}^c}\|_1 \le \|\bm{x}_{\mathcal{I}}\|_1  \\
			& \quad + \beta \|\bm{x}\|_2, \mathcal{J} = \mathcal{I} \cup \mathcal{I}(\bm{x}; t) \bigg\} > 0.
		\end{split}
	\end{equation}
	where $\mathcal{I}(\bm{x};t)$ denotes the index set of the first $t$ largest coordinates in absolute value of $\bm{x}$ in $\mathcal{I}^c$.
\end{definition}
The following theorem provides the oracle inequality and recovery bound for DCEN.

\begin{theorem}[]\label{thm:main_bound}
	Let $(\bar{\bm{x}}, \mathcal{S}, s)$ be defined as above. Assume the ground true signal is bounded such that $\|\bar{\bm{x}}\|_2 \le R$ for some constant $R > 0$. We choose the regularization parameter $\gamma$ to satisfy the structural matching condition $\gamma \ge \frac{2R}{2R + \sqrt{s}}$. Let $\bm{x}^* \in \text{lev}_h(\bar{\bm{x}})$, and suppose that $\bm{A}$ satisfies the DCEN-REC$(s, s, \beta)$ with $\beta := 2\alpha + \sqrt{s}$. Define $\Gamma := \lambda\gamma(2\sqrt{s} + \alpha)$.	Then the following statements hold:
	\begin{enumerate}
		\item[(i)] The oracle inequality holds:
		\begin{equation}\label{eq:oracle}
			\ell(\bm{x}^*) + \lambda\gamma \bigl( \|\bm{x}^*_{\mathcal{S}^c}\|_1 - \alpha\|\bm{x}^*_{\mathcal{S}^c}\|_2 \bigr) \le \frac{2 \Gamma^2}{\phi_\beta^2(s,s)}.
		\end{equation}
		
		\item[(ii)] The recovery bound holds:
		\begin{equation}\label{eq:recovery}
			\|\bm{x}^* - \bar{\bm{x}}\|_2^2 \le \frac{\Gamma^2}{\phi_\beta^4(s,s)} \left( 4 + \frac{\Gamma^2}{4s \lambda^2 \gamma^2 (1-\alpha)^2} \right).
		\end{equation}
	\end{enumerate}
\end{theorem}

\begin{proof}
	(i) Let $\bm{x}^* \in \text{lev}_h(\bar{\bm{x}})$. Then $h(\bm{x}^*) \le h(\bar{\bm{x}})$. Denoting the residual as $\hat{\bm{x}} := \bm{x}^* - \bar{\bm{x}}$ and recalling $\bm{b} = \bm{A}\bar{\bm{x}}$, we have
	\begin{equation}\label{eq:proof_start}
		\frac{1}{2}\|\bm{A}\hat{\bm{x}}\|_2^2 + \lambda r(\bm{x}^*)	\le \lambda r(\bar{\bm{x}}).
	\end{equation}
	By the subadditivity of norms and the fact that $\bar{\bm{x}}_{\mathcal{S}^c} = \bm{0}$, which implies $\bm{x}^*_{\mathcal{S}^c} = \hat{\bm{x}}_{\mathcal{S}^c}$.Then, we have
	\begin{equation}\label{eq:rel_l1}
		\begin{aligned}
			\|\bar{\bm{x}}\|_1 - \|\bm{x}^*\|_1 &= \|\bar{\bm{x}}_{\mathcal{S}}\|_1 - (\|\bm{x}^*_{\mathcal{S}}\|_1 + \|\bm{x}^*_{\mathcal{S}^c}\|_1)\\
			 &\le \|\hat{\bm{x}}_{\mathcal{S}}\|_1 - \|\hat{\bm{x}}_{\mathcal{S}^c}\|_1
		\end{aligned}
	\end{equation}
	and $-\alpha\|\bm{x}^*\|_2 \ge -\alpha(\|\bm{x}^*_{\mathcal{S}}\|_2 + \|\bm{x}^*_{\mathcal{S}^c}\|_2) = -\alpha\|\bm{x}^*_{\mathcal{S}}\|_2 - \alpha\|\hat{\bm{x}}_{\mathcal{S}^c}\|_2$.
	Moreover, the reverse triangle inequality yields $\|\bm{x}^*_{\mathcal{S}}\|_2 - \|\bar{\bm{x}}\|_2 \le \|\bm{x}^*_{\mathcal{S}} - \bar{\bm{x}}_{\mathcal{S}}\|_2 = \|\hat{\bm{x}}_{\mathcal{S}}\|_2$, which leads to $\alpha\|\bm{x}^*_{\mathcal{S}}\|_2 - \alpha\|\bar{\bm{x}}\|_2 \le \alpha\|\hat{\bm{x}}_{\mathcal{S}}\|_2$. Applying the Cauchy-Schwarz inequality, one has $\|\bar{\bm{x}}\|_2^2 - \|\bm{x}^*\|_2^2 = -2\langle \bar{\bm{x}}, \hat{\bm{x}} \rangle - \|\hat{\bm{x}}\|_2^2 \le 2\|\bar{\bm{x}}\|_2 \|\hat{\bm{x}}_{\mathcal{S}}\|_2 - \|\hat{\bm{x}}\|_2^2$.
	Substituting these relations into \eqref{eq:proof_start} and rearranging yields
	\begin{equation}\label{eq:proof_mid1}
		\begin{aligned}
			&\frac{1}{2}\|\bm{A}\hat{\bm{x}}\|_2^2 + \lambda(1-\gamma)\|\hat{\bm{x}}\|_2^2 + \lambda\gamma\bigl(\|\hat{\bm{x}}_{\mathcal{S}^c}\|_1 - \alpha\|\hat{\bm{x}}_{\mathcal{S}^c}\|_2\bigr)\\ & \quad \le \lambda\gamma \|\hat{\bm{x}}_{\mathcal{S}}\|_1 + \lambda\gamma\alpha\|\hat{\bm{x}}_{\mathcal{S}}\|_2 + 2\lambda(1-\gamma)\|\bar{\bm{x}}\|_2 \|\hat{\bm{x}}_{\mathcal{S}}\|_2.
		\end{aligned}
	\end{equation}
	It follows from $\|\bar{\bm{x}}\|_2 \le R$ and $\gamma \ge \frac{2R}{2R + \sqrt{s}}$ that
	\begin{equation}\label{eq:bunea_trick}
		2\lambda(1-\gamma)\|\bar{\bm{x}}\|_2 \|\hat{\bm{x}}_{\mathcal{S}}\|_2 \le 2\lambda(1-\gamma)R \|\hat{\bm{x}}_{\mathcal{S}}\|_2 \le \lambda\gamma\sqrt{s} \|\hat{\bm{x}}_{\mathcal{S}}\|_2.
	\end{equation}
	Substituting \eqref{eq:bunea_trick} back into the RHS of \eqref{eq:proof_mid1}, we obtain
	\begin{equation}\label{eq:proof_mid_simplified}
		\begin{aligned}
			&\frac{1}{2}\|\bm{A}\hat{\bm{x}}\|_2^2 + \lambda(1-\gamma)\|\hat{\bm{x}}\|_2^2 + \lambda\gamma\bigl(\|\hat{\bm{x}}_{\mathcal{S}^c}\|_1 - \alpha\|\hat{\bm{x}}_{\mathcal{S}^c}\|_2\bigr)\\ &\le \lambda\gamma \|\hat{\bm{x}}_{\mathcal{S}}\|_1 + \lambda\gamma(\alpha + \sqrt{s})\|\hat{\bm{x}}_{\mathcal{S}}\|_2.
		\end{aligned}
	\end{equation}
	Then, one has $\|\hat{\bm{x}}_{\mathcal{S}^c}\|_1 \le \|\hat{\bm{x}}_{\mathcal{S}}\|_1 + \alpha\|\hat{\bm{x}}_{\mathcal{S}^c}\|_2 + (\alpha + \sqrt{s})\|\hat{\bm{x}}_{\mathcal{S}}\|_2$.
	Since both $\|\hat{\bm{x}}_{\mathcal{S}^c}\|_2 \le \|\hat{\bm{x}}\|_2$ and $\|\hat{\bm{x}}_{\mathcal{S}}\|_2 \le \|\hat{\bm{x}}\|_2$, it strictly follows that
	\begin{equation*}
		\|\hat{\bm{x}}_{\mathcal{S}^c}\|_1 \le \|\hat{\bm{x}}_{\mathcal{S}}\|_1 + (2\alpha + \sqrt{s})\|\hat{\bm{x}}\|_2 = \|\hat{\bm{x}}_{\mathcal{S}}\|_1 + \beta \|\hat{\bm{x}}\|_2.
	\end{equation*}
	Thus, $\hat{\bm{x}}$ geometrically falls into the restricted cone, and the condition $\phi_\beta(s,s)$ is applicable, implying $\|\hat{\bm{x}}_{\mathcal{S}}\|_2 \le \frac{1}{\phi_\beta(s,s)} \|\bm{A}\hat{\bm{x}}\|_2$. 
	It follows from $\|\hat{\bm{x}}_{\mathcal{S}}\|_1 \le \sqrt{s}\|\hat{\bm{x}}_{\mathcal{S}}\|_2$ that
	\begin{equation*}
		\begin{aligned}
			\lambda\gamma (\|\hat{\bm{x}}_{\mathcal{S}}\|_1 + (\alpha + \sqrt{s})\|\hat{\bm{x}}_{\mathcal{S}}\|_2)
			 \le \lambda\gamma(2\sqrt{s} + \alpha)\|\hat{\bm{x}}_{\mathcal{S}}\|_2.
		\end{aligned}
	\end{equation*}
	Dropping the $\lambda(1-\gamma)\|\hat{\bm{x}}\|_2^2$ term on the LHS of \eqref{eq:proof_mid_simplified} gives
	\begin{equation}\label{eq:proof_mid2}
		\frac{1}{2}\|\bm{A}\hat{\bm{x}}\|_2^2 + \lambda\gamma\bigl(\|\bm{x}^*_{\mathcal{S}^c}\|_1 - \alpha\|\bm{x}^*_{\mathcal{S}^c}\|_2\bigr) \le \frac{\Gamma}{\phi_\beta(s,s)} \|\bm{A}\hat{\bm{x}}\|_2.
	\end{equation}
	Applying the inequality $ab \le \frac{1}{2}a^2 + \frac{1}{2}b^2$ to the RHS of \eqref{eq:proof_mid2} with $a = \|\bm{A}\hat{\bm{x}}\|_2$ and $b = \frac{\Gamma}{\phi_\beta(s,s)}$, we obtain
	\begin{equation*}
		\frac{1}{2}\|\bm{A}\hat{\bm{x}}\|_2^2 + \lambda\gamma\bigl(\|\bm{x}^*_{\mathcal{S}^c}\|_1 - \alpha\|\bm{x}^*_{\mathcal{S}^c}\|_2\bigr) \le \frac{1}{2}\|\bm{A}\hat{\bm{x}}\|_2^2 + \frac{\Gamma^2}{2\phi_\beta^2(s,s)}.
	\end{equation*}
	Subtracting $\frac{1}{2}\|\bm{A}\hat{\bm{x}}\|_2^2$ temporarily gives $\lambda\gamma\bigl(\|\bm{x}^*_{\mathcal{S}^c}\|_1 - \alpha\|\bm{x}^*_{\mathcal{S}^c}\|_2\bigr) \le \frac{\Gamma^2}{2\phi_\beta^2(s,s)}$. Re-substituting this bound back into \eqref{eq:proof_mid2} yields the oracle inequality \eqref{eq:oracle}.
	
	(ii) Since $\|\bm{x}^*_{\mathcal{S}^c}\|_2 \le \|\bm{x}^*_{\mathcal{S}^c}\|_1$, we have the strict lower bound for the nonconvex penalty $\|\bm{x}^*_{\mathcal{S}^c}\|_1 - \alpha\|\bm{x}^*_{\mathcal{S}^c}\|_2 \ge (1-\alpha)\|\bm{x}^*_{\mathcal{S}^c}\|_1$. Combining this with \eqref{eq:oracle}, we explicitly bound the tail $\ell_1$ norm
	\begin{equation*}\label{eq:tail_bound}
		\|\bm{x}^*_{\mathcal{S}^c}\|_1 \le \frac{\Gamma^2}{2\lambda\gamma(1-\alpha)\phi_\beta^2(s,s)}.
	\end{equation*}
	Define the extended support $\mathcal{N} := \mathcal{S} \cup \mathcal{S}(\bm{x}^*; s)$. By Lemma 7 in \cite{hu2017group}, one has
	\begin{equation}\label{eq:Nc_bound}
		\|\hat{\bm{x}}_{\mathcal{N}^c}\|_2 \le \frac{1}{\sqrt{s}}\|\bm{x}^*_{\mathcal{S}^c}\|_1 \le \frac{\Gamma^2}{2\sqrt{s}\lambda\gamma(1-\alpha)\phi_\beta^2(s,s)}.
	\end{equation}
	For the component on $\mathcal{N}$, the DCEN-REC$(s, t, \beta)$ condition implies
	\begin{equation}\label{eq:N_bound}
		\|\hat{\bm{x}}_{\mathcal{N}}\|_2 \le \frac{\|\bm{A}\hat{\bm{x}}\|_2}{\phi_\beta(s,s)} \le \frac{2\Gamma}{\phi_\beta^2(s,s)},
	\end{equation}
	where the last inequality utilizes the bound $\|\bm{A}\hat{\bm{x}}\|_2 \le \frac{2\Gamma}{\phi_\beta(s,s)}$ derived from \eqref{eq:proof_mid2}. 
	Summing the squared norms \eqref{eq:Nc_bound} and \eqref{eq:N_bound} due to disjoint support, we have
	\begin{equation*}
		\begin{aligned}
			\|\bm{x}^* - \bar{\bm{x}}\|_2^2 &= \|\hat{\bm{x}}_{\mathcal{N}}\|_2^2 + \|\hat{\bm{x}}_{\mathcal{N}^c}\|_2^2 \\
			&\le \frac{\Gamma^2}{\phi_\beta^4(s,s)} \left( 4 + \frac{\Gamma^2}{4s \lambda^2 \gamma^2 (1-\alpha)^2} \right),
		\end{aligned}
	\end{equation*}
	which establishes \eqref{eq:recovery}. The proof is complete.
\end{proof}

\begin{remark}[]
	Compared to the $\ell_{1-2}$ model ($\alpha=1, \gamma=1$), the proposed Theorem \ref{thm:main_bound} mathematically demonstrates a highly sophisticated trade-off:
	\begin{enumerate}
		\item \textbf{Elimination of Mathematical Singularity:} By restricting $\alpha \in (0,1)$ and adding $\ell_2^2$, DCEN provides a strict coercivity bound. This completely eliminates the fatal mathematical singularity (the $\frac{1}{\sqrt{s}-2}$ blow-up) present in the original $\ell_{1-2}$ theory, guaranteeing a globally continuous and structurally stable recovery bound for \textit{any} arbitrary sparsity level $s \ge 1$.
		\item \textbf{Enhanced Robustness via the Grouping Effect:} The incorporation of the $\ell_2^2$ penalty (controlled by $1-\gamma$) structurally regularizes the Hessian matrix. This effectively expands the restricted cone (allowing a larger $\beta$), granting the model immense robustness against highly coherent design matrices where pure nonconvex methods fail. 
	\end{enumerate}
\end{remark}

\section{Proximal Operator}\label{sec:proxm_operator}

In this section, we derive a closed-form expression for the proximal operator of the DCEN regularization defined as
\begin{equation}\label{eq:proxm_operator}
	\operatorname{prox}_{\lambda r_{\gamma}}(\bm{y})
	=\arg\min_{\bm{x}\in\mathbb{R}^n} f_{\gamma,\alpha}(\bm{x}) =
	r(\bm{x}) +\frac{1}{2\lambda}\|\bm{x}-\bm{y}\|_2^2,
\end{equation}
with $\lambda>0$. To this end, we first show the following existence of solutions of \eqref{eq:proxm_operator}. 

\begin{theorem}[Solution's Existence]
	Let $\lambda>0$, $\gamma\in(0,1)$, $\alpha\in(0,1)$, and $\bm{y}\in\mathbb{R}^n$ be given. Then the solution set of the minimization problem \eqref{eq:proxm_operator} is nonempty, i.e., $\operatorname{prox}_{\lambda r_{\gamma}}(\bm{y}) \neq \emptyset$.
\end{theorem}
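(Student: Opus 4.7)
The plan is to apply the classical Weierstrass existence theorem to the objective
\[ F(\bm{x}) := \gamma\bigl(\|\bm{x}\|_1 - \alpha\|\bm{x}\|_2\bigr) + (1-\gamma)\|\bm{x}\|_2^2 + \tfrac{1}{2\lambda}\|\bm{x}-\bm{y}\|_2^2. \]
Since $F$ is a sum of norms, a squared norm, and an affine shift composed with a squared norm, it is continuous on all of $\mathbb{R}^n$. Hence the only remaining ingredient is coercivity, after which every sublevel set of $F$ is closed and bounded, hence compact, and the infimum is attained on any nonempty sublevel set.

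For coercivity, I would observe that the only negative contribution to $F$ is $-\gamma\alpha\|\bm{x}\|_2$, which grows at most linearly in $\|\bm{x}\|_2$, while the term $(1-\gamma)\|\bm{x}\|_2^2$ grows quadratically with strictly positive coefficient $1-\gamma>0$ (by the hypothesis $\gamma\in(0,1)$). Discarding the non-negative pieces $\gamma\|\bm{x}\|_1\geq 0$ and $\tfrac{1}{2\lambda}\|\bm{x}-\bm{y}\|_2^2\geq 0$ yields the lower bound
\[ F(\bm{x}) \;\geq\; (1-\gamma)\|\bm{x}\|_2^2 - \gamma\alpha\|\bm{x}\|_2, \]
a real quadratic in $t=\|\bm{x}\|_2$ with positive leading coefficient, so $F(\bm{x})\to +\infty$ as $\|\bm{x}\|_2\to\infty$. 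Combining this with continuity gives, through Weierstrass, the existence of at least one minimizer, i.e.\ $\operatorname{prox}_{\lambda r_{\gamma}}(\bm{y})\neq\emptyset$.

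I do not anticipate any serious obstacle: the argument is essentially a quadratic-beats-linear comparison that remains uniform across all admissible parameters $\alpha,\gamma\in(0,1)$ and $\lambda>0$. The only mild point to be cautious about is precisely this sign/growth bookkeeping, since a naive estimate would leave the nonconvex $-\gamma\alpha\|\bm{x}\|_2$ term unchecked; once the non-negative pieces are dropped, however, coercivity is transparent. Note that this approach yields only existence of a minimizer; uniqueness and the closed-form description of $\operatorname{prox}_{\lambda r_{\gamma}}(\bm{y})$ are deferred to the subsequent derivation, which will require a finer separable analysis exploiting the structure of the $\ell_1$ and $\ell_2$ components.
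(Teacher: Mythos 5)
Your proposal is correct and follows essentially the same route as the paper: both establish continuity (the paper phrases it as proper, closed, lower semicontinuous) plus coercivity and invoke the Weierstrass argument via compact sublevel sets. The only cosmetic difference is in the coercivity bookkeeping — the paper absorbs the negative $-\gamma\alpha\|\bm{x}\|_2$ term using $\|\bm{x}\|_1\ge\|\bm{x}\|_2$ and keeps the $\tfrac{1}{2\lambda}\|\bm{x}-\bm{y}\|_2^2$ term expanded, whereas you simply drop the non-negative pieces and use the quadratic-beats-linear comparison $(1-\gamma)\|\bm{x}\|_2^2-\gamma\alpha\|\bm{x}\|_2\to+\infty$ — and both estimates are valid.
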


\begin{proof}
	Clearly, the objective function in \eqref{eq:proxm_operator} is proper and closed. It follows from the inequality $\|\bm{x}\|_1 \ge \|\bm{x}\|_2$ that $f_{\gamma,\alpha}(\bm{x}) \ge \left(1-\gamma+\frac{1}{2\lambda}\right) \|\bm{x}\|_2^2 + \left[\gamma(1- \alpha) - \frac{1}{\lambda}\|\bm{y}\|_2\right] \|\bm{x}\|_2 + \frac{1}{2\lambda}\|\bm{y}\|_2^2$. The right-hand side is a quadratic function of $\|\bm{x}\|_2$ with a positive leading coefficient $1 - \gamma + \frac{1}{2\lambda} > 0$. Hence, $f_{\gamma,\alpha}(\bm{x}) \to +\infty$ as $\|\bm{x}\|_2 \to \infty$, which implies that $f_{\gamma,\alpha}$ is coercive. Since $f_{\gamma,\alpha}$ is also lower semicontinuous, its sublevel sets are compact and so $\operatorname{prox}_{\lambda r_{\gamma}}(\bm{y}) \neq \emptyset$.
\end{proof}

\begin{lemma}\label{lem:proxm_operator_solu}
	Let $\bm{y}\in\mathbb{R}^n$, $\lambda>0$, $\gamma\in(0,1)$, and $\alpha\ge 0$. 
	Denote by $\bm{x}^*$ an optimal solution to problem~\eqref{eq:unconstprobm}. 
	Then $\bm{x}^*$ admits the following characterization.
	
	\begin{enumerate}
		\item[(a)] If $\|\bm{y}\|_\infty > \gamma\lambda$, then
		$\bm{x}^*=\frac{\|\mathcal{\bm{S}}_{\gamma\lambda}(\bm{y})\|_2 + \alpha\gamma\lambda} {(1+2\lambda(1-\gamma))\,\|\mathcal{\bm{S}}_{\gamma\lambda}(\bm{y})\|_2}\;\mathcal{\bm{S}}_{\gamma\lambda}(\bm{y})$, where $\mathcal{\bm{S}}_{\gamma \lambda}(\bm{y}) = \operatorname{sign}(\bm{y}) \odot \max\big(|\bm{y}| - \gamma \lambda, \, \bm{0}\big)$.
		\item[(b)] If $\|\bm{y}\|_\infty = \gamma \lambda$, then the optimal solution $\bm{x}^*$ satisfies $\bm{x}^* \in \{ \bm{x} \in \mathbb{R}^n : x_i = 0 \text{ for } |y_i| < \gamma \lambda,\ \|\bm{x}\|_2 = \alpha \gamma \lambda,\ x_i y_i \ge 0, \forall i \}$, and $\bm{x}^*$ is non-unique if multiple $|y_i| = \gamma \lambda$.
		\item[(c)] If $(1-\alpha)\gamma \lambda < \|\bm{y}\|_\infty < \gamma \lambda$, then $\bm{x}^*$ is $1$-sparse, supported on an index $i$ with $|y_i| = \|\bm{y}\|_\infty$, and $\bm{x}^* \in \{ \bm{x} \in \mathbb{R}^n\Big| \|\bm{x}\|_2 = \|\bm{y}\|_\infty + (\alpha-1)\gamma \lambda, \ x_i y_i \ge 0 \}.$
		\item[(d)] If $\|\bm{y}\|_\infty \le (1-\alpha)\gamma\lambda$, then $\bm{x}^*=\bm{0}$.
	\end{enumerate}
\end{lemma}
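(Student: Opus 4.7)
The plan is to classify the stationary points of the nonconvex objective
$F(\bm{x})=\gamma\|\bm{x}\|_1-\gamma\alpha\|\bm{x}\|_2+(1-\gamma)\|\bm{x}\|_2^2+\tfrac{1}{2\lambda}\|\bm{x}-\bm{y}\|_2^2$
through the Fermat inclusion $\bm{0}\in\partial F(\bm{x}^*)$, and then stratify by the location of $\|\bm{y}\|_\infty$ relative to the thresholds $\gamma\lambda$ and $(1-\alpha)\gamma\lambda$. Since $\|\bm{x}\|_2$ is smooth on $\mathbb{R}^n\setminus\{\bm{0}\}$, multiplying the inclusion by $\lambda$ at any nonzero candidate yields
\[
\bm{y}\in\lambda\gamma\,\partial\|\bm{x}^*\|_1+c\,\bm{x}^*,\qquad c:=1+2\lambda(1-\gamma)-\frac{\lambda\gamma\alpha}{\|\bm{x}^*\|_2}.
\]
Read componentwise, every nonzero entry must satisfy $c\,x_i^*=y_i-\lambda\gamma\,\mathrm{sign}(x_i^*)$ with $x_i^*y_i\ge 0$, while every zero entry forces $|y_i|\le\lambda\gamma$. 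Consequently the active support is contained in $\{i:|y_i|\ge\gamma\lambda\}$, and the nonzero block has the soft-thresholded shape $\mathcal{\bm{S}}_{\gamma\lambda}(\bm{y})/c$ whenever $c\neq 0$.

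The second step closes the scalar loop on $u:=\|\bm{x}^*\|_2$. Taking Euclidean norms on the active support reduces the vector inclusion to the single identity $|c|\,u=\|\mathcal{\bm{S}}_{\gamma\lambda}(\bm{y})\|_2$, which, combined with the definition of $c$, is linear in $u$; its unique admissible positive root, back-substituted into $\bm{x}^*=\mathcal{\bm{S}}_{\gamma\lambda}(\bm{y})/c$, produces the closed-form expression of (a). The sign of $c$ at this root then drives the case split: $c>0$ in the regime $\|\bm{y}\|_\infty>\gamma\lambda$ gives (a) immediately; $c=0$ at $\|\bm{y}\|_\infty=\gamma\lambda$ forces the support into $\{i:|y_i|=\gamma\lambda\}$ with sign-aligned entries constrained only through the norm identity implied by $c=0$, yielding (b); in the intermediate regime $(1-\alpha)\gamma\lambda<\|\bm{y}\|_\infty<\gamma\lambda$ one finds $c<0$, and the componentwise equation is solvable positively only on a single index $i^*$ realising $|y_{i^*}|=\|\bm{y}\|_\infty$, producing the 1-sparse candidate of (c); finally, when $\|\bm{y}\|_\infty\le(1-\alpha)\gamma\lambda$ even this 1-sparse candidate has non-positive norm, so the only stationary point is $\bm{x}^*=\bm{0}$, i.e.\ (d).

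The main obstacle is not the mechanical subdifferential calculus but the nonconvexity of $F$: several stationary points coexist in the intermediate regimes, and the Fermat inclusion alone cannot single out the global minimum. In particular, in (c) one could formally build multi-sparse stationary points on any subset of $\{i:|y_i|<\gamma\lambda\}$ satisfying $c<0$, and in (d) both $\bm{0}$ and a 1-sparse candidate solve the inclusion. I would close the argument by direct value comparison, evaluating $F$ at each candidate through the formulas above against $F(\bm{0})=\tfrac{1}{2\lambda}\|\bm{y}\|_2^2$, and using the lower bound in statement~(c) of Lemma~\ref{lem:bounderDCEN}, which already shows that among fixed-$\|\bm{x}\|_2$ candidates the minimum of $r_\gamma$ is attained at sparsity one. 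This monotonicity rules out multi-sparse competitors in (c), while a direct quadratic inspection in $t=|x_{i^*}^*|$ pins down the transition $\|\bm{y}\|_\infty=(1-\alpha)\gamma\lambda$ as the exact point at which the 1-sparse cost equals $F(\bm{0})$. Combined with the existence theorem stated just above, this bookkeeping identifies $\mathrm{prox}_{\lambda r_\gamma}(\bm{y})$ in each regime.
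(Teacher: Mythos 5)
Your route is genuinely different from the paper's. The paper proves this lemma by completing the square: the quadratic terms are merged so that \eqref{eq:proxm_operator} becomes exactly the $\ell_1-\alpha\ell_2$ proximal problem with rescaled data $\tilde{\bm{y}}=\bm{y}/(1+2\lambda(1-\gamma))$ and rescaled parameter $\tilde{\lambda}=\lambda\gamma/(1+2\lambda(1-\gamma))$, and then the closed form of \cite{lou2018fast} is quoted wholesale. You instead re-derive the classification from scratch via the Fermat inclusion plus a global value comparison; that is a legitimate, self-contained alternative (essentially re-proving the Lou--Yan lemma with the extra $\ell_2^2$ term absorbed), and it has the virtue of not outsourcing the hard part, but as written it contains concrete gaps.

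First, the claim that the active support lies in $\{i:|y_i|\ge\gamma\lambda\}$ and the scalar identity $|c|\,u=\|\mathcal{S}_{\gamma\lambda}(\bm{y})\|_2$ are valid only on the branch $c>0$. When $c<0$ the componentwise relation reads $|c|\,|x_i^*|=\gamma\lambda-|y_i|$, so the support sits inside $\{i:|y_i|\le\gamma\lambda\}$ and the norm identity becomes $|c|\,u=\bigl(\sum_{i\in\Lambda}(\gamma\lambda-|y_i|)^2\bigr)^{1/2}$; in regime (c) one has $\mathcal{S}_{\gamma\lambda}(\bm{y})=\bm{0}$, so the identity you propose collapses to $|c|u=0$ and cannot ``drive the case split.'' Second, eliminating multi-sparse stationary points in regime (c) by Lemma~\ref{lem:bounderDCEN}(c) alone does not work: at fixed $\|\bm{x}\|_2=r$ the penalty indeed favors $1$-sparsity, but the data-fidelity contribution $-\tfrac{1}{\lambda}\langle\bm{x},\bm{y}\rangle$ is not constant across supports and can favor spreading mass (over the sphere its minimum is $-r\|\bm{y}\|_2$, not $-r\|\bm{y}\|_\infty$). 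The comparison must be run on the combined functional $\sum_i|x_i|\bigl(\gamma-|y_i|/\lambda\bigr)$, whose weights are strictly positive precisely when $\|\bm{y}\|_\infty<\gamma\lambda$; only then does concentrating all mass on an index attaining $\|\bm{y}\|_\infty$ win, and this is the step missing from your plan. Third, in regime (a) you still have to rule out $\bm{x}=\bm{0}$ (e.g.\ via the one-sided directional derivative $\gamma(1-\alpha)-\|\bm{y}\|_\infty/\lambda<0$); it does not follow ``immediately'' from $c>0$. Finally, be aware that pushing your computation through gives $\|\bm{x}^*\|_2=\alpha\gamma\lambda/(1+2\lambda(1-\gamma))$ in case (b) and $\bigl(\|\bm{y}\|_\infty-(1-\alpha)\gamma\lambda\bigr)/(1+2\lambda(1-\gamma))$ in case (c), i.e.\ the same rescaling by $1+2\lambda(1-\gamma)$ that appears in (a) and in the paper's substitution $(\tilde{\bm{y}},\tilde{\lambda})$, whereas the norms printed in (b)--(c) of the statement are unscaled; your direct route will force you to confront and resolve that normalization discrepancy explicitly.
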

\begin{proof}
	By combining the quadratic terms in the proximal mapping \eqref{eq:proxm_operator}, the resulting operator can be expressed in the same form as the $\ell_1 - \alpha \ell_2$ proximal mapping $\operatorname{prox}_{\lambda r_{\gamma}}(\bm{\tilde{y}}) = \arg\min_{\bm{x}\in \mathbb{R}^n} \, \|\bm{x}\|_1 - \alpha \|\bm{x}\|_2  +\frac{1}{2\tilde{\lambda}} \|\bm{x} - \bm{\tilde{y}}\|_2^2$,	where $\tilde{\lambda} = \frac{\lambda\gamma}{1 + 2 \beta (1 - \gamma)}$ and $\bm{\tilde{y}} =\frac{1}{1 + 2 \beta (1 - \gamma)}\bm{y}$. This reformulation allows one to directly apply the closed-form expressions derived for the $\ell_1 - \alpha \ell_2$ case. The remaining proof follows that of~\cite{lou2018fast} and is therefore omitted.
\end{proof}

\begin{lemma}\label{lem:prox_quad_bound}
	Given $\bm{y} \in \mathbb{R}^n$, $\lambda > 0$, $\gamma \in (0,1)$, and $\alpha \geq 0$. If $\bm{x}^* \in \operatorname{prox}_{\lambda r_{\gamma}}(\bm{y})$ be the minimizer. Then, for any $\bm{x} \in \mathbb{R}^n$, the quadratic upper bound $f_{\gamma,\alpha}(\bm{x}^*) - f_{\gamma,\alpha}(\bm{x})  \leq \min\frac{1}{2}\left( \frac{\gamma \alpha}{\|\bm{x}^*\|_2} - \frac{1}{\lambda} - (1 - \gamma),\; 0 \right) \|\bm{x}^* - \bm{x}\|_2^2$ holds, where $\frac{\gamma \alpha}{0}$ is defined as $0$ if $\alpha = 0$, and $+\infty$ if $\alpha > 0$.
\end{lemma}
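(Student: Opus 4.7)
The plan is to view $f_{\gamma,\alpha}$ as a difference of a strongly convex block
\[
\phi(\bm{x}) := \gamma\|\bm{x}\|_1 + (1-\gamma)\|\bm{x}\|_2^2 + \tfrac{1}{2\lambda}\|\bm{x}-\bm{y}\|_2^2
\]
and a concave perturbation $\psi(\bm{x}) := \gamma\alpha\|\bm{x}\|_2$, then to combine standard Bregman-type inequalities for these pieces at the minimizer $\bm{x}^*$ and use the first-order optimality condition to eliminate every linear-in-$(\bm{x}-\bm{x}^*)$ contribution. This collapses the argument to a single quadratic inequality whose coefficient reflects the interplay between the strong convexity of $\phi$ and the pointwise ``smoothness'' of $\psi$ at $\bm{x}^*$.

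For the generic case $\bm{x}^*\ne\bm{0}$, I would first record the first-order optimality condition: there exists $\bm{v}\in\partial\|\bm{x}^*\|_1$ with
\[
\gamma \bm{v} \;-\; \gamma\alpha\,\tfrac{\bm{x}^*}{\|\bm{x}^*\|_2} \;+\; 2(1-\gamma)\bm{x}^* \;+\; \tfrac{1}{\lambda}(\bm{x}^*-\bm{y}) \;=\; \bm{0}.
\]
Each piece of $f_{\gamma,\alpha}(\bm{x})-f_{\gamma,\alpha}(\bm{x}^*)$ is then bounded separately: the $\ell_1$ subgradient inequality gives $\gamma(\|\bm{x}\|_1-\|\bm{x}^*\|_1)\ge \gamma\langle\bm{v},\bm{x}-\bm{x}^*\rangle$; the two Euclidean quadratics yield exact Bregman remainders that reproduce their strong-convexity moduli; and the delicate piece $-\gamma\alpha\|\bm{x}\|_2$ is controlled via the sharp majorant
\[
\|\bm{x}\|_2 \;\le\; \|\bm{x}^*\|_2 \;+\; \Bigl\langle \tfrac{\bm{x}^*}{\|\bm{x}^*\|_2},\,\bm{x}-\bm{x}^*\Bigr\rangle \;+\; \tfrac{1}{2\|\bm{x}^*\|_2}\|\bm{x}-\bm{x}^*\|_2^2,
\]
valid globally as soon as $\bm{x}^*\ne\bm{0}$. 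This majorant is the technical crux of the proof; I would establish it by squaring both sides, substituting the exact identity $\|\bm{x}\|_2^2=\|\bm{x}^*\|_2^2+2\langle\bm{x}^*,\bm{x}-\bm{x}^*\rangle+\|\bm{x}-\bm{x}^*\|_2^2$, and verifying non-negativity of the candidate right-hand side via Cauchy--Schwarz combined with AM--GM on the cross term, so that extracting a square root is legitimate.

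Summing the three bounds and invoking the optimality condition cancels every linear contribution, leaving a quadratic inequality of the form $f_{\gamma,\alpha}(\bm{x})-f_{\gamma,\alpha}(\bm{x}^*)\ge \tfrac{1}{2}\bigl(\tfrac{1}{\lambda}+c(\gamma)-\tfrac{\gamma\alpha}{\|\bm{x}^*\|_2}\bigr)\|\bm{x}-\bm{x}^*\|_2^2$, which rearranges immediately to the stated upper bound on $f_{\gamma,\alpha}(\bm{x}^*)-f_{\gamma,\alpha}(\bm{x})$. The outer $\min(\cdot,0)$ then comes for free: whenever the parenthesized coefficient happens to be non-negative, the trivial inequality $f_{\gamma,\alpha}(\bm{x}^*)\le f_{\gamma,\alpha}(\bm{x})$ supplied by minimality is used instead. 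The corner case $\bm{x}^*=\bm{0}$ is handled by the stated convention on $\gamma\alpha/0$: if $\alpha>0$ the bound reads $0$ and matches the minimizer inequality; if $\alpha=0$ the concave term is absent and strong convexity of $\phi$ alone closes the argument.

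The main obstacle, in my view, is securing the global quadratic majorant for $\|\cdot\|_2$ at $\bm{x}^*$: the Hessian of $\|\cdot\|_2$ scales like $1/\|\bm{x}^*\|_2$ but degenerates toward the origin, so a routine second-order Taylor expansion would give only a local estimate. The squaring trick sidesteps this by reducing everything to the quadratic identity above, but one must carefully justify non-negativity of both sides before taking a square root -- this is the single non-trivial calculation, and once it is in place, the rest of the proof is a direct assembly of Bregman estimates and cancellations against the optimality condition.
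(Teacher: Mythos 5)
Your proposal is correct and follows essentially the same route as the paper's proof: first-order optimality at $\bm{x}^*$ combined with the $\ell_1$ subgradient inequality, exact expansion of the two quadratic terms, and a quadratic majorization of $\|\cdot\|_2$ at $\bm{x}^*$ (your majorant is, after multiplying by $\|\bm{x}^*\|_2$, exactly the paper's AM--GM step $\|\bm{x}^*\|_2\|\bm{x}\|_2\le\tfrac12\|\bm{x}^*\|_2^2+\tfrac12\|\bm{x}\|_2^2$), with the linear terms cancelled by optimality and the case $\bm{x}^*=\bm{0}$ handled by minimality. The only cosmetic deviations are your Bregman/DC framing and using strong convexity of $\phi$ for the $\bm{x}^*=\bm{0},\ \alpha=0$ subcase (the paper instead invokes the prox characterization $\|\bm{y}\|_\infty\le\gamma\lambda$), and your constant $2(1-\gamma)$ is in fact slightly sharper than, hence implies, the stated $(1-\gamma)$ bound.
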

\begin{proof}
	We consider two cases based on the support of $\bm{x}^*$.
	
	\textbf{Case 1: $\bm{x}^* \neq \bm{0}$.}  
	Since $\bm{x}^*$ is a minimizer of $f_{\gamma,\alpha}$, according to the first-order optimality condition, we have	 $\bm{0} \in \gamma \partial \|\bm{x}^*\|_1 - \gamma \alpha \frac{\bm{x}^*}{\|\bm{x}^*\|_2} + 2(1 - \gamma)\bm{x}^* + \frac{1}{\lambda}(\bm{x}^* - \bm{y})$. Thus, there exists $\bm{p} \in \partial \|\bm{x}^*\|_1$ such that
	\begin{equation}\label{eq:subgrad_p}
		\bm{p} = \frac{1}{\lambda\gamma} \bm{y} - \left( \frac{1}{\lambda\gamma} + \frac{2(1 - \gamma)}{\gamma} - \frac{\gamma \alpha}{\|\bm{x}^*\|_2} \right) \bm{x}^*.
	\end{equation}
	By the subgradient inequality for $\|\cdot\|_1$, one has $\|\bm{x}\|_1 \geq \|\bm{x}^*\|_1 + \langle \bm{p}, \bm{x} - \bm{x}^* \rangle$, which implies $\|\bm{x}^*\|_1 - \|\bm{x}\|_1 \leq \langle \bm{p}, \bm{x}^* - \bm{x} \rangle$ and so $f_{\gamma,\alpha}(\bm{x}^*) - f_{\gamma,\alpha}(\bm{x})\leq \gamma \langle \bm{p}, \bm{x}^* - \bm{x} \rangle - \gamma\alpha\left(\|\bm{x}^*\|_2-\|\bm{x}\|_2\right) +(1 - \gamma)\left(\|\bm{x}^*\|_2^2-\|\bm{x}\|_2^2\right) + \frac{1}{2\lambda} \left(\|\bm{x}^* - \bm{y}\|_2^2 - \|\bm{x} - \bm{y}\|_2^2\right)$. Substitute the expression for $\bm{p}$ from \eqref{eq:subgrad_p} into the first term, one has
	$
	\gamma \langle \bm{p}, \bm{x}^* - \bm{x} \rangle
	= \gamma \langle \frac{\bm{y} - \bm{x}^*}{\gamma\lambda} -\frac{2(1-\gamma)\bm{x}^*}{\gamma} + \frac{\alpha\bm{x}^*}{\|\bm{x}^*\|_2}, \bm{x}^* - \bm{x}\rangle.
	$
	It follows from $\frac{1}{2\lambda} \left(\|\bm{x}^* - \bm{y}\|_2^2 - \|\bm{x} - \bm{y}\|_2^2\right) = -\frac{1}{2\lambda}\|\bm{x}^*-\bm{x}\|_2^2+\frac{1}{\lambda}\langle \bm{x}^*-\bm{y},\bm{x}^*-\bm{x}\rangle$ that 
	$f_{\gamma,\alpha}(\bm{x}^*) - f_{\gamma,\alpha}(\bm{x})=\gamma\frac{\alpha}{\|\bm{x}^*\|_2}\left(-\langle \bm{x}^*,\bm{x} \rangle + \|\bm{x}^*\|_2\|\bm{x}\|_2 \right) - (1-\gamma)\left(\|\bm{x}^*\|_2^2 +\|\bm{x}\|_2^2\right)-\frac{1}{2\lambda}\|\bm{x}^*-\bm{x}\|_2^2$. From the basic inequalities $\|\bm{x}^*\|_2^2 +\|\bm{x}\|_2^2 \geq \frac{1}{2}\|\bm{x}^*-\bm{x}\|_2^2$ and $\|\bm{x}^*\|_2\|\bm{x}\|_2 \leq \frac{1}{2}\|\bm{x}^*\|_2^2 + \frac{1}{2}\|\bm{x}\|_2^2$, we have 
	\begin{equation*}
		\begin{aligned}
			&\gamma\frac{\alpha}{\|\bm{x}^*\|_2}\left(-\langle \bm{x}^*,\bm{x} \rangle + \|\bm{x}^*\|_2\|\bm{x}\|_2 \right)\\
			 & \quad - (1-\gamma)\left(\|\bm{x}^*\|_2^2 +\|\bm{x}\|_2^2\right)-\frac{1}{2\lambda}\|\bm{x}^*-\bm{x}\|_2^2\\ &\quad \quad =\frac{1}{2}\left(\frac{\gamma\alpha}{\|\bm{x}^*\|_2}-\frac{1}{\lambda}-(1-\gamma)\right).
		\end{aligned}
	\end{equation*}
	
	\textbf{Case 2: $\bm{x}^* = \bm{0}$.}  
	It follows from $f_{\gamma,\alpha}(\bm{0}) \leq f_{\gamma,\alpha}(\bm{x})$ that $f_{\gamma,\alpha}(\bm{x}^*) - f_{\gamma,\alpha}(\bm{x}) \leq 0$. In the special case $\alpha = 0$, expanding $f_{\gamma,\alpha}(\bm{0}) - f_{\gamma,\alpha}(\bm{x})$ yields $f_{\gamma,\alpha}(\bm{0}) -f_{\gamma,\alpha}(\bm{x})= \frac{1}{\lambda} \langle \bm{x}, \bm{y} \rangle - \gamma \|\bm{x}\|_1 - \left(1 - \gamma + \frac{1}{2\lambda}\right) \|\bm{x}\|_2^2$. According to statement (d) in Lemma~\ref{lem:proxm_operator_solu}, we have $\|\bm{y}\|_\infty \leq \gamma \lambda$, which implies that $\langle \bm{x}, \bm{y} \rangle \leq \gamma \lambda \|\bm{x}\|_1$ and  $\frac{1}{\lambda} \langle \bm{x}, \bm{y} \rangle - \gamma \|\bm{x}\|_1 \leq 0$. It follows that $f_{\gamma,\alpha}(\bm{0}) - f_{\gamma,\alpha}(\bm{x}) \leq -\left(1 - \gamma + \frac{1}{2\lambda}\right)\|\bm{x}\|_2^2  \leq 0$.
	Combining both cases, since $f_{\gamma,\alpha}(\bm{x}^*) - f_{\gamma,\alpha}(\bm{x}) \leq 0$ always holds, and in Case 1 it is bounded by a quadratic term, the uniform bound is $f_{\gamma,\alpha}(\bm{x}^*) - f_{\gamma,\alpha}(\bm{x}) \leq \min\left( \frac{1}{2}\left(\frac{\gamma\alpha}{\|\bm{x}^*\|_2}-\frac{1}{\lambda}-(1-\gamma)\right),\; 0 \right) \|\bm{x}^* - \bm{x}\|_2^2$.	When $\gamma = 1$, Lemma~\ref{lem:prox_quad_bound} degenerates into Lemma~2 in~\cite{lou2018fast}.
\end{proof}

\section{DC algorithm}\label{sec:DCAalgorithms}
This section presents DC algorithm (DCA) for DCEN and shows its global convergence and linear convergence rate within the K\L{} framework.
\subsection{DC decomposition and iteration framework}
DCA  can be expressed as the difference of two proper and convex functions, i.e., $f(\bm{x}) = g(\bm{x}) - h(\bm{x})$. At each iteration, DCA linearizes the concave part $-h(\bm{x})$ using a subgradient $\bm{y}^k \in \partial h(\bm{x}^k)$ and solves a convex subproblem $\bm{x}^{k+1} \in \arg\min_{\bm{x}\in \mathbb{R}^n} \left\{ g(\bm{x}) - \langle \bm{y}^k, \bm{x} \rangle \right\}$. The standard iterative framework of DCA is given by
\begin{equation*}
	\bm{x}^{k+1} = \arg\min_{\bm{x} \in \mathbb{R}^n} g(\bm{x}) - \left( h(\bm{x}^k) + \langle \bm{y}^k, \bm{x} - \bm{x}^k \rangle \right),\quad \bm{y}^k \in \partial h(\bm{x}^k).
\end{equation*}
The objective function of the unconstrained optimization problem \eqref{eq:unconstprobm} takes the form
\begin{equation}\label{eq:DCdecom}
	\mathcal{H}(\bm{x}) = g(\bm{x}) -  h(\bm{x}),
\end{equation}
where $g(\bm{x}) = \ell(\bm{x}) + \lambda (\gamma \| \bm{x} \|_1 + \frac{3(1-\gamma)}{2}\|\bm{x}\|_2^2)$ and $h(\bm{x}) = \lambda (\alpha \gamma\| \bm{x} \|_2+\frac{1-\gamma}{2}\|\bm{x}\|_2^2) $. \eqref{eq:DCdecom} is a DC function, amenable to optimization via DCA. Let $\bm{d}_{\ell_2}(\bm{x}_k) = \bigl( \alpha \gamma / \|\bm{x}_k\|_2 \allowbreak + 1 - \gamma \bigr) \bm{x}_k$. The update of DCA for minimizing $\mathcal{H}(\bm{x})$ takes the form
\begin{equation}\label{eq:DCAiterframe}
	\bm{x}_{k+1} =
	\begin{cases}
		\arg\min_{\bm{x} \in \mathbb{R}^n} g(\bm{x}), & \text{if } \bm{x}_k = 0, \\
		\arg\min_{\bm{x} \in \mathbb{R}^n} g(\bm{x}) - \lambda \left\langle \bm{x}, \bm{d}_{\ell_2}(\bm{x}_k) \right\rangle, & \text{otherwise}, 
	\end{cases}
\end{equation}
which follows from linearizing the concave term $-\lambda \left(\alpha \gamma\| \bm{x} \|_2+\frac{1-\gamma}{2}\|\bm{x}\|_2^2\right)$ at $\bm{x}_k$. We adopt the effective  stopping criterion $\|\bm{x}_{k+1} - \bm{x}_k\|_2 \leq \epsilon \bigl(1 + \|\bm{x}_k\|_2\bigr)$, where $\epsilon > 0$ is a prescribed tolerance. The detailed procedure for minimizing DCEN (DCEN-DCA) is presented in Algorithm \ref{alg:DCEN-DCA}.
\begin{algorithm}[!ht]
	\caption{DCEN-DCA}
	\label{alg:DCEN-DCA}
	\begin{algorithmic}[1]
		\REQUIRE Initialize $\bm{A},\bm{b}$, $\bm{x}_0$, $\epsilon > 0$, $\lambda >0$, $\alpha \in(0,1)$, and $\gamma \in(0,1)$.
		\FOR{$k = 1$ to $K$}
		\IF{$\bm{x}_k = \bm{0}$}
		\STATE $\bm{x}_{k+1} \gets \arg\min_{\bm{x} \in \mathbb{R}^n} g(\bm{x})$ 
		\ELSE
		\STATE $\bm{x}_{k+1} \gets \arg\min_{\bm{x} \in \mathbb{R}^n} \left\{ g(\bm{x}) - \lambda \left\langle \bm{x}, d_{\ell_2}(\bm{x}_k) \right\rangle \right\}$ 
		\ENDIF
		\ENDFOR
	\end{algorithmic}
\end{algorithm}

\subsection{ADMM solver for the subproblem in DCEN-DCA}
At each iteration of DCEN-DCA, the concave part of the DC decomposition is linearized at the current point, leading to a convex optimization subproblem 
\begin{equation}\label{eq:dca_subprob}
	\bm{x}_{k+1} \in \arg\min_{\bm{x} \in \mathbb{R}^n} g(\bm{x})- \lambda \left\langle \bm{x},   d_{\ell_2}(\bm{x}_k) \right\rangle.
\end{equation}
In the proposed DCEN-DCA iterator framework, this convex subproblem admits a  solution via ADMM. Specifically, we introduce an auxiliary variable  $\bm{z}$ to decouple the $\ell_1$ norm term. The subproblem \eqref{eq:dca_subprob} is then equivalently reformulated as the following constrained optimization problem
\begin{equation*}
	\begin{aligned}
		&\min_{\bm{x},\bm{z} \in \mathbb{R}^n}\; \ell(\bm{x}) + \frac{3\lambda(1-\gamma)}{2}\|\bm{x}\|_2^2
		- \lambda\langle \bm{x}, \bm{d}_{\ell_2}(\bm{x}_k)\rangle + \lambda\gamma\|\bm{z}\|_1 \\ 
		&\;\;\text{s.t.} \;\bm{x} - \bm{z} = \bm{0}.
	\end{aligned}
\end{equation*}

The augmented Lagrangian function with penalty parameter $\rho>0$ and dual variable $\bm{y}$ is given by
$\mathcal{L}_\rho(\bm{x},\bm{z},\bm{y})= \ell(\bm{x}) + \frac{3\lambda(1-\gamma)}{2}\|\bm{x}\|_2^2- \lambda\langle \bm{x}, \bm{d}_{\ell_2}(\bm{x}_k)\rangle + \lambda\gamma\|\bm{z}\|_1 +\langle \bm{y}, \bm{x}-\bm{z}\rangle + \frac{\rho}{2}\|\bm{x} - \bm{z}\|_2^2$. Let $f(\bm{x})=\ell(\bm{x}) + \frac{3\lambda(1-\gamma)}{2}\|\bm{x}\|_2^2$. Then at the $t$-th iteration of ADMM (with the outer DCEN-DCA index $k$ suppressed for clarity), the primal and dual variables are updated according to
\begin{equation}\label{eq:admm_updatesSimplifying}
	\begin{cases}
		\bm{x}_{t+1} = \displaystyle\arg\min_{\bm{x} \in \mathbb{R}^n} \; f(\bm{x}) - \lambda\bm{x}^\top\bm{d}_{\ell_2}(\bm{x}_k)+\bm{y}_k^\top\bm{x} + \frac{\rho}{2}\|\bm{x} - \bm{z}_k\|_2^2,\\
		\bm{z}_{t+1} = \displaystyle\arg\min_{\bm{z}\in \mathbb{R}^n} \; \frac{\rho}{2}\|\bm{x}_{k+1} - \bm{z}\|_2^2 + \lambda\gamma\|\bm{z}\|_1 +\langle \bm{y}_k, \bm{x}_{k+1}-\bm{z}\rangle,\\
		\bm{y}_{t+1} = \bm{y}_t + \rho \left( \bm{x}_{t+1} - \bm{z}_{t+1} \right).
	\end{cases}
\end{equation}

As for the $\bm{x}$-subproblem of \eqref{eq:admm_updatesSimplifying}, the closed form solution is
$\bm{x}_{t+1}= \big(\bm{A}^\top \bm{A} + (3\lambda(1-\gamma) + \rho) \bm{I} \big)^{-1}\left(\bm{A}^\top \bm{b} + \lambda\, \bm{d}_{\ell_2}(\bm{x}_k) - \bm{y}_{t} + \rho\bm{z}_{t})\right)$. Let $\eta = 3\lambda(1-\gamma) + \rho$ and 
$\bm{R}_t = \bm{A}^\top \bm{b} + \lambda\,\bm{d}_{\ell_2}(\bm{x}_k) - \bm{y}_t + \rho \bm{z}_t$.
By applying the Sherman–Morrison–Woodbury (SMW) identity, we have $\bm{x}_{t+1}= \eta^{-1}\bm{R}_t	- \eta^{-2}\bm{A}^\top	\big(\bm{I} + \eta^{-1}\bm{A}\bm{A}^\top\big)^{-1}\bm{A}\bm{R}_t$. For moderate $n$ precompute a Cholesky factorization of the matrix $\bm{A}^\top \bm{A} + (3\lambda(1-\gamma) + \rho) \bm{I}$ and for large-scale problems use conjugate gradient (CG) with warm start \cite{tao2022minimization}. 

For the $\bm{z}$-update $\bm{z}_{t+1} = \displaystyle\arg\min_{\bm{z}\in \mathbb{R}^n} \; \|\bm{z}\|_1 + \frac{\rho}{2\lambda\gamma}\|\bm{z} - (\bm{x}_{t+1}+1/\rho\bm{y}_{t})\|_2^2$, which has a solution 
$	\bm{z}_{t+1}:= \operatorname{sign}(\bm{x}_{t+1} + \rho^{-1}\bm{y}_t)
\odot \max(\left|\bm{x}_{t+1} + \rho^{-1}\bm{y}_t\right|
- \tfrac{\lambda\gamma}{\rho},\, \bm{0})$.

The inner ADMM iteration is terminated at step $t$ when the primal and dual feasibility, 
together with the relative consistency, are simultaneously satisfied
\begin{equation}\label{eq:admm_stop}
	\begin{aligned}
		\| \bm{r}_t \|_2 \leq \varepsilon_{\mathrm{pri}}^t,\quad\| \bm{s}_t \|_2 \leq \varepsilon_{\mathrm{dual}}^t,\; \text{and} \;\; \mathrm{relerr}_t < \varepsilon_{\mathrm{rel}},
	\end{aligned}
\end{equation}
where $\bm{r}_t = \bm{x}_t - \bm{z}_t$ and $\bm{s}_t = \rho(\bm{z}_t - \bm{z}_{t-1})$. The primal and dual residuals, as well as their tolerances, are given by
\begin{subequations}\label{eq:admm_residuals}
	\begin{align}
		&\varepsilon_{\mathrm{pri}}^t = 
		\sqrt{n}\,\varepsilon_{\mathrm{abs}} 
		+ \varepsilon_{\mathrm{rel}} 
		\max\big(\|\bm{x}_t\|_2, \|\bm{z}_t\|_2\big),\\ &\varepsilon_{\mathrm{dual}}^t = 
		\sqrt{n}\,\varepsilon_{\mathrm{abs}} 
		+ \varepsilon_{\mathrm{rel}}\|\bm{y}_t\|_2, \\
		&
		\mathrm{relerr}_t = 
		\frac{\|\bm{x}_t - \bm{z}_t\|_2}
		{\max\big(\|\bm{x}_t\|_2, \|\bm{z}_t\|_2, \varepsilon\big)},
	\end{align}
\end{subequations}
where $\varepsilon_{\mathrm{abs}}$ and $\varepsilon_{\mathrm{rel}}$ denote the absolute and relative tolerances, respectively, and $\varepsilon$ is the machine precision. The above stopping rule follows the standard criterion proposed in \cite{boyd2011distributed}. ADMM for solving the DCEN-DCA subproblem is summarized in Algorithm \ref{alg:ADMMsub}. Since the subproblem is convex, ADMM converges to its global minimizer under standard assumptions; when embedded in DCEN-DCA (outer nonconvex loop) the whole scheme typically converges to a stationary point of the original nonconvex problem under usual DCEN-DCA assumptions \cite{tao1998dc,le2018dc}.
\begin{algorithm}[t]
	\caption{ADMM for DCEN-DCA subproblem \eqref{eq:dca_subprob}.}
	\label{alg:ADMMsub}
	\begin{algorithmic}[1]
		\REQUIRE $\bm{A},\bm{b}, \bm{d}_{\ell_2}(\bm{x}_k)$; parameters $\varepsilon_{\mathrm{abs}}, \varepsilon_{\mathrm{rel}}, \lambda,\rho>0$, $\gamma\in (0,1)$, and $T$ (maximum inner iteration).
		\ENSURE The solution $\bm{x}_{k+1}$ of the DCA subproblem.
		\STATE Initialize $\bm{x}_0 = \bm{y}_0 = \bm{z}_0$
		\STATE Compute $\eta \gets 3\lambda(1 - \gamma) + \rho$
		\FOR{$t = 0$ to $T - 1$}
		\IF{the stopping condition \eqref{eq:admm_stop} is not met}
		\STATE Compute $\bm{R}_t \gets \bm{A}^\top \bm{b} + \lambda\,\bm{d}_{\ell_2}(\bm{x}_k) - \bm{y}_t + \rho \bm{z}_t$
		\STATE Update $\bm{x}_{t+1} \gets \eta^{-1}\bm{R}_t - \eta^{-2}\bm{A}^\top \big(\bm{I} + \eta^{-1}\bm{A}\bm{A}^\top\big)^{-1} \bm{A}\bm{R}_t$
		\STATE Update $\bm{z}_{t+1} \gets \mathcal{S}_{\lambda\gamma/\rho}\!\left(\bm{x}_{t+1} + \frac{1}{\rho}\bm{y}_{t}\right)$
		\STATE Update $\bm{y}_{t+1} \gets \bm{y}_t + \rho \left( \bm{x}_{t+1} - \bm{z}_{t+1} \right)$
		\STATE Compute $\varepsilon_{\mathrm{pri}}^t$, $\varepsilon_{\mathrm{dual}}^t$, and $\mathrm{relerr}_t$ via \eqref{eq:admm_residuals}
		\ELSE
		\STATE $\bm{x}_{k+1} \gets \bm{x}_{t}$; \textbf{break loop}
		\ENDIF
		\ENDFOR
	\end{algorithmic}
\end{algorithm}

\subsection{Convergence analysis}
The DC decomposition in \eqref{eq:DCdecom} overcomes the lack of strong convexity in the DC decomposition of the original $\ell_1 - \ell_2$ minimization problem \cite{yin2015minimization}. By introducing an additional $\ell_2^2$ regularization term into the objective function, the function $g(\bm{x})$ becomes strongly convex, and the smoothness of $h(\bm{x})$ is also improved through the inclusion of the $\ell_2^2$ regularization term. This modification not only ensures the convergence of DCEN-DCA (satisfying standard theoretical conditions) \cite{tao1998dc,le2018dc} but also significantly enhances the well-conditioning of the subproblems and the overall robustness of the algorithm, while preserving the sparse-inducing property of the $\ell_1 - \alpha \ell_2$ regularization. Moreover,  each subproblem \eqref{eq:dca_subprob} admits a unique global minimizer due to its strong convexity, which further accelerates the convergence and improves the numerical stability of the iterative process. Assuming that each DCEN-DCA iteration \eqref{eq:DCAiterframe} is solved exactly \cite{yin2015minimization}, we establish the algorithm’s convergence as well as its rate of convergence.
\begin{algorithm}[t]
	\caption{ADMM for minimizing DCEN.}
	\label{alg:siopt_admm}
	\begin{algorithmic}[1]
		\REQUIRE $\bm{A}, \bm{b}$; parameters $\varepsilon_{\mathrm{abs}}, \varepsilon_{\mathrm{rel}}, \lambda, \rho > 0$, $\alpha, \gamma \in (0,1)$, and $K$ (maximum iteration).
		\ENSURE The optimal solution $\bm{x}^*$.
		\STATE Initialize $\bm{x}_0 = \bm{z}_0 = \bm{u}_0$
		\STATE Compute $\eta \gets 3\lambda(1 - \gamma) + \rho$
		\FOR{$k = 0$ to $K - 1$}
		\IF{the stopping condition \eqref{eq:admm_stop} is not met}
		\STATE Compute $\bm{\tilde{b}}_k \gets \bm{A}^\top \bm{b} + \rho(\bm{z}_k - \bm{u}_k)$
		\STATE Update $\bm{x}_{k+1} \gets \rho^{-1}\bm{\tilde{b}}_k - \rho^{-2}\bm{A}^\top \big(\bm{I} + \rho^{-1}\bm{A}\bm{A}^\top\big)^{-1} \bm{A}\bm{\tilde{b}}_k$
		\STATE Update $\bm{z}_{k+1} \gets \mathrm{prox}_{\frac{\lambda}{\rho} r_{\gamma}}\!\left(\bm{x}_{k+1} + \bm{u}_k\right)$
		\STATE Update $\bm{u}_{k+1} \gets \bm{u}_k + \bm{x}_{k+1} - \bm{z}_{k+1}$
		\STATE Compute $\varepsilon_{\mathrm{pri}}^k$, $\varepsilon_{\mathrm{dual}}^k$, and $\mathrm{relerr}_k$ via \eqref{eq:admm_residuals}
		\ELSE
		\STATE $\bm{x}^{*} \gets \bm{x}_{k}$ ; \textbf{break loop}
		\ENDIF
		\ENDFOR
	\end{algorithmic}
\end{algorithm}

\begin{proposition}\label{prop:existence}
	For $\bm{A} \in \mathbb{R}^{m \times n}$ and $\bm{b}\in \mathbb{R}^{n}$, if constants $\gamma, \alpha \in (0,1)$ and $\lambda>0$, then  $\mathrm{argmin}_{\bm{x}\in\mathbb{R}^n} \mathcal{H}(\bm{x})$ is nonempty and compact.
\end{proposition}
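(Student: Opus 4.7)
The plan is to invoke the Weierstrass extreme value theorem in its standard coercive form: if $\mathcal{H}$ is proper, lower semicontinuous, and coercive on $\mathbb{R}^n$, then its argmin is nonempty and compact. Properness and continuity of $\mathcal{H}$ are immediate since it is the sum of a smooth quadratic, two norms, and a squared norm; the only nontrivial ingredient is coercivity.

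For coercivity, I would discard the nonnegative data-fidelity term and bound the regularizer from below. The key observation is that for every $\bm{x}\in\mathbb{R}^n$ one has $\|\bm{x}\|_1\ge\|\bm{x}\|_2$, so with $\alpha\in(0,1)$,
\begin{equation*}
\gamma\bigl(\|\bm{x}\|_1-\alpha\|\bm{x}\|_2\bigr)\ge\gamma(1-\alpha)\|\bm{x}\|_2\ge 0.
\end{equation*}
(Alternatively, the same nonnegativity follows from Lemma \ref{lem:lowerrboun1}(a).) Consequently,
\begin{equation*}
\mathcal{H}(\bm{x})\ge\lambda\gamma(1-\alpha)\|\bm{x}\|_2+\lambda(1-\gamma)\|\bm{x}\|_2^{2},
\end{equation*}
and since $\lambda(1-\gamma)>0$ by the hypothesis $\gamma\in(0,1)$, the right-hand side tends to $+\infty$ as $\|\bm{x}\|_2\to\infty$. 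Hence $\mathcal{H}$ is coercive, independently of any assumption on $\bm{A}$.

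To conclude, fix $M:=\mathcal{H}(\bm{0})=\tfrac12\|\bm{b}\|_2^{2}<\infty$. The sublevel set $\mathcal{S}:=\{\bm{x}\in\mathbb{R}^n:\mathcal{H}(\bm{x})\le M\}$ is nonempty (it contains $\bm{0}$), closed (by continuity), and bounded (by coercivity), hence compact. A continuous function on a nonempty compact set attains its minimum, and this minimum coincides with $\inf_{\bm{x}\in\mathbb{R}^n}\mathcal{H}(\bm{x})$. Therefore $\mathrm{argmin}_{\bm{x}\in\mathbb{R}^n}\mathcal{H}(\bm{x})$ is nonempty; being a closed subset of the compact sublevel set $\mathcal{S}$, it is also compact.

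I do not anticipate a real obstacle here. The only subtle point is verifying that the nonconvex piece $-\gamma\alpha\|\bm{x}\|_2$ cannot swamp the $\ell_2^{2}$ term; this is precisely what the inequality $\|\bm{x}\|_1\ge\|\bm{x}\|_2$ (combined with $\alpha<1$) rules out, and it is worth stating this step explicitly so the role of the parameter ranges $\alpha,\gamma\in(0,1)$ is transparent.
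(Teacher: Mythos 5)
Your proposal is correct and follows essentially the same route as the paper's proof: both establish coercivity via the inequality $\|\bm{x}\|_1 \ge \|\bm{x}\|_2$ (so the DC term is bounded below by $\gamma(1-\alpha)\|\bm{x}\|_2 \ge 0$ and the $\lambda(1-\gamma)\|\bm{x}\|_2^2$ term dominates), then combine continuity with compactness of sublevel sets to conclude the argmin is nonempty and compact. Your write-up is slightly more explicit about Weierstrass and the closedness of the argmin inside the sublevel set, but the substance is identical.
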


\begin{proof}
	The function $\mathcal{H}(\bm{x})$ is continuous on $\mathbb{R}^n$ since each component term is continuous. To prove the nonemptiness, it suffices to show that $\mathcal{H}(\bm{x})$ is coercive, i.e., $\mathcal{H}(\bm{x})\to+\infty$ as $\|\bm{x}\|_2\to\infty$. In fact, for any $\bm{x}\in\mathbb{R}^n$, since $\|\bm{x}\|_1\ge\|\bm{x}\|_2$ and $\alpha\in(0,1)$, we have $\|\bm{x}\|_1 - \alpha\|\bm{x}\|_2 \ge (1-\alpha)\|\bm{x}\|_2$. It follows that $\mathcal{H}(\bm{x})\ge \ell(\bm{x}) + \lambda r(\bm{x}) \ge \lambda(1-\gamma)\|\bm{x}\|_2^2$. For $(1-\gamma)>0$, the right-hand side tends to $+\infty$ as $\|\bm{x}\|_2\to\infty$, which establishes coercivity. Because $\mathcal{H}(\bm{x})$ is coercive and continuous on $\mathbb{R}^n$, the sublevel set $\{\bm{x}\in\mathbb{R}^n: \mathcal{H}(\bm{x})\le c\}$ 
	is closed and bounded for any given constant $c$, and so it is compact.  Thus, the solution set $\mathrm{argmin}_{\bm{x}\in\mathbb{R}^n}\,\mathcal{H}(\bm{x})$ is nonempty and compact.
\end{proof}
\begin{remark}
	It is worth noting that the existence result in Proposition~\ref{prop:existence} does not require any specific assumptions on the matrix $\bm{A}$ or the vector $\bm{b}$. 
	Even if $\bm{A}$ is rank-deficient or $\bm{A}=\bm{0}$, the coercivity of $\mathcal{H}(\bm{x})$ is guaranteed by the quadratic term $\lambda(1-\gamma)\|\bm{x}\|_2^2$, 
	which ensures that $\mathcal{H}(\bm{x})\to+\infty$ as $\|\bm{x}\|_2\to\infty$. 
	However, when $\gamma=1$ and $\alpha \ge 1$ \cite{lou2018fast}, the quadratic term vanishes and $\mathcal{H}(\bm{x})$ may lose coercivity, in which case additional assumptions on $\bm{A}$, $\bm{b}$, and constants $\lambda$ and $\alpha$ are required to guarantee the existence of a minimizer. When $\gamma = 1$ and $\alpha = 1$, the matrix $\bm{A}$ must satisfy $\ker(\bm{A}) \cap \bm{\Sigma}_1 = \{\bm{0}\}$, where $\bm{\Sigma}_1 = \{\bm{x} \in \mathbb{R}^n : \|\bm{x}\|_0 \le 1\}$. Otherwise, if there exists a 1-sparse vector $\bm{x} \in \ker(\bm{A})$, it follows that $\|\bm{x}\|_1 - \|\bm{x}\|_2 = 0$. In such a scenario, the $\ell_1 - \ell_2$ model ceases to be coercive. When $\gamma = 1$ and $\alpha > 1$, assuming that for all possible support sets $\Lambda$ with cardinality $|\Lambda| = s$, the submatrix $\bm{A}_\Lambda$ has full column rank, the objective function $\frac{1}{2} \|\bm{Ax} - \bm{b}\|_2^2 + \lambda(\|\bm{x}\|_1 - \alpha \|\bm{x}\|_2)$ is coercive. However, these conditions do not imply global coercivity; rather, coercivity is maintained only along sparse directions, while the functional remains non-coercive over the entire space $\mathbb{R}^n$. Consequently, for the $\|x\|_1 - \alpha \|x\|_2 \ (\alpha \ge 1)$ model, the boundedness of the sequence must be assumed as a prior during algorithmic convergence analysis. This provides an additional motivation for the proposed DCEN model.
\end{remark}

\begin{lemma}\label{lem:nonopt_zero}
	Let $\bm{A} \in \mathbb{R}^{m \times n}$, $\bm{b} \in \mathbb{R}^m$ with $\bm{b} \neq \bm{0}$, $\lambda > 0$, $\gamma \in (0,1)$, and $\alpha \in (0,1)$. If $\bm{b} \notin ker(\bm{A}^{\top})$ and 
	\begin{equation}\label{eq:cond_nonopt}
		\|\bm{A}^\top \bm{b}\|_2 > \lambda \gamma (\sqrt{n} + \alpha),
	\end{equation}
	then \(\bm{x} = \bm{0}\) is not a stationary point of DCEN.
\end{lemma}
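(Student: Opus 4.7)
}
My plan is to argue by contradiction using the DC decomposition \eqref{eq:DCdecom} of $\mathcal{H}$ and the standard DC stationarity condition $\partial g(\bm{x})\cap \partial h(\bm{x})\neq\emptyset$, which is the criterion that DCEN-DCA targets. The strategy is to compute both subdifferentials explicitly at $\bm{x}=\bm{0}$, show that DC stationarity at $\bm{0}$ forces a norm inequality of the form $\|\bm{A}^\top\bm{b}\|_2\le \lambda\gamma(\sqrt{n}+\alpha)$, and then invoke the hypothesis \eqref{eq:cond_nonopt} to obtain a contradiction.

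First, I would split $\mathcal{H}=g-h$ with $g(\bm{x})=\tfrac{1}{2}\|\bm{A}\bm{x}-\bm{b}\|_2^2+\lambda\gamma\|\bm{x}\|_1+\tfrac{3\lambda(1-\gamma)}{2}\|\bm{x}\|_2^2$ and $h(\bm{x})=\lambda\alpha\gamma\|\bm{x}\|_2+\tfrac{\lambda(1-\gamma)}{2}\|\bm{x}\|_2^2$ as in \eqref{eq:DCdecom}. Both parts are proper convex, the quadratic pieces are smooth with zero gradient at the origin, and the standard subdifferential formulas give
\begin{equation*}
\partial g(\bm{0})=\{-\bm{A}^\top\bm{b}+\lambda\gamma\bm{v}:\|\bm{v}\|_\infty\le 1\},\qquad \partial h(\bm{0})=\{\lambda\alpha\gamma\bm{u}:\|\bm{u}\|_2\le 1\}.
\end{equation*}
Suppose, for contradiction, that $\bm{0}$ is a stationary point of $\mathcal{H}$. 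Then there exist $\bm{v}\in\mathbb{R}^n$ with $\|\bm{v}\|_\infty\le 1$ and $\bm{u}\in\mathbb{R}^n$ with $\|\bm{u}\|_2\le 1$ such that $-\bm{A}^\top\bm{b}+\lambda\gamma\bm{v}=\lambda\alpha\gamma\bm{u}$, equivalently $\bm{A}^\top\bm{b}=\lambda\gamma\bm{v}-\lambda\alpha\gamma\bm{u}$.

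Taking Euclidean norms and using the triangle inequality together with the elementary bound $\|\bm{v}\|_2\le\sqrt{n}\,\|\bm{v}\|_\infty\le\sqrt{n}$ gives
\begin{equation*}
\|\bm{A}^\top\bm{b}\|_2\le \lambda\gamma\|\bm{v}\|_2+\lambda\alpha\gamma\|\bm{u}\|_2\le \lambda\gamma(\sqrt{n}+\alpha),
\end{equation*}
which directly contradicts \eqref{eq:cond_nonopt}. Hence $\partial g(\bm{0})\cap\partial h(\bm{0})=\emptyset$, i.e., $\bm{0}$ is not a stationary point.

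The main obstacle I anticipate is primarily notational rather than technical: one has to be careful to state precisely which notion of stationarity is being used (DC-critical, Fréchet, or limiting), since $\mathcal{H}$ is nonsmooth at $\bm{0}$. Because $g$ and $h$ are convex and $h$ is finite-valued with a nonempty, bounded subdifferential at $\bm{0}$, the Fréchet/limiting stationarity $\bm{0}\in\widehat{\partial}\mathcal{H}(\bm{0})$ implies the DC-critical condition $\partial g(\bm{0})\cap\partial h(\bm{0})\neq\emptyset$ through the standard sum rule for a convex minus a locally Lipschitz convex function, so the argument above rules out all common notions of stationarity simultaneously. The hypothesis $\bm{b}\notin\ker(\bm{A}^\top)$ is used only implicitly to ensure $\bm{A}^\top\bm{b}\neq \bm{0}$, which makes \eqref{eq:cond_nonopt} nonvacuous.
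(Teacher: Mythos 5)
Your proposal is correct and follows essentially the same argument as the paper: a contradiction argument that stationarity at $\bm{0}$ forces $\bm{A}^\top\bm{b} = \lambda\gamma\bm{v} - \lambda\alpha\gamma\bm{u}$ with $\|\bm{v}\|_\infty \le 1$, $\|\bm{u}\|_2 \le 1$, and then the norm bound $\|\bm{A}^\top\bm{b}\|_2 \le \lambda\gamma(\sqrt{n}+\alpha)$ contradicting \eqref{eq:cond_nonopt}. Phrasing it via the DC criticality condition $\partial g(\bm{0})\cap\partial h(\bm{0})\neq\emptyset$ rather than directly via $\bm{0}\in\partial\mathcal{H}(\bm{0})$ is an immaterial difference (and your remark clarifying the relation between the two notions is a small but welcome extra precision).
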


\begin{proof}
	Assume, for contradiction, that \(\bm{x} = \bm{0}\) is a stationary point.  
	Since the objective function $\mathcal{H}(\bm{x})$ is proper and lower semicontinuous, a necessary condition for optimality is that $\bm{0} \in \partial \mathcal{H}(\bm{0})$. The subdifferential of $\mathcal{H}(\bm{x})$ at $\bm{0}$ is given by $\partial \mathcal{H}(\bm{0}) = -\bm{A}^\top \bm{b} + \lambda \gamma \left( \partial \|\cdot\|_1(\bm{0}) - \alpha \, \partial \|\cdot\|_2(\bm{0}) \right)$. Recall that $\partial \|\cdot\|_1(\bm{0}) = \{ \bm{u} \in \mathbb{R}^n : \|\bm{u}\|_\infty \leq 1 \}, \;\partial \|\cdot\|_2(\bm{0}) = \{ \bm{v} \in \mathbb{R}^n : \|\bm{v}\|_2 \leq 1 \}$. Thus, $\bm{0} \in \partial \mathcal{H}(\bm{0})$ implies the existence of $\bm{u}$ and $\bm{v}$ satisfying $\|\bm{u}\|_\infty \leq 1$ and $\|\bm{v}\|_2 \leq 1$ such that $\bm{A}^\top \bm{b} = \lambda \gamma (\bm{u} - \alpha \bm{v})$. Taking the Euclidean norm of both sides and applying the triangle inequality yields $\|\bm{A}^\top \bm{b}\|_2 \leq \lambda \gamma \left( \|\bm{u}\|_2 + \alpha \|\bm{v}\|_2 \right)\leq \lambda \gamma \left( \sqrt{n} \|\bm{u}\|_\infty + \alpha \cdot 1 \right)\leq \lambda \gamma (\sqrt{n} + \alpha)$. This contradicts the assumption \eqref{eq:cond_nonopt}. Therefore, $\bm{0}$ cannot be a stationary point.
\end{proof}

Lemma~\ref{lem:nonopt_zero} indicates that, in most practical scenarios, the optimal solution of DCEN is nonzero. Indeed, condition~\eqref{eq:cond_nonopt} is typically satisfied when the regularization parameter $\lambda$ is small, which is the usual case in sparse learning and signal recovery applications. Only when a strong regularization is imposed, i.e., when $\lambda$ takes a sufficiently large value, the zero vector may be a potential minimizer.

\begin{theorem}[Sufficient descent] \label{thm:Sufficient Decrease}
	Let $\{\bm{x}_k\}$ be the sequence generated by the iteration framework \eqref{eq:DCAiterframe} and parameters $\lambda>0$, $\alpha\in(0,1)$, and $\gamma\in(0,1)$. Define $ \mu_g := \lambda_{\min}(\bm{A}^\top\bm{A}) + 3\lambda(1-\gamma) > 0$ and $c := \tfrac{\mu_g}{2}$. Then, the sequence $\{\bm{x}_k\}$ satisfies the sufficient decrease property
	\begin{equation}\label{eq:Sufficient Decrease}
		\mathcal{H}(\bm{x}_k) - \mathcal{H}(\bm{x}_{k+1}) \ge c\,\|\bm{x}_k - \bm{x}_{k+1}\|_2^2.
	\end{equation} 
\end{theorem}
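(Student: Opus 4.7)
The plan is to exploit the standard DCA descent argument, with the only real work being to bookkeep the nonsmooth pieces correctly and to absorb the two branches of the iteration \eqref{eq:DCAiterframe} into a single subgradient-based formulation.

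First I would observe that $g(\bm{x})$ is $\mu_g$-strongly convex on $\mathbb{R}^n$. The Hessian of the smooth part $\tfrac{1}{2}\|\bm{A}\bm{x}-\bm{b}\|_2^2 + \tfrac{3\lambda(1-\gamma)}{2}\|\bm{x}\|_2^2$ equals $\bm{A}^\top\bm{A} + 3\lambda(1-\gamma)\bm{I}$, whose smallest eigenvalue is exactly $\mu_g$; adding the convex (but nonsmooth) term $\lambda\gamma\|\bm{x}\|_1$ preserves strong convexity of the same modulus. This gives the subgradient inequality $g(\bm{y}) \ge g(\bm{x}) + \langle \bm{p}, \bm{y}-\bm{x}\rangle + \tfrac{\mu_g}{2}\|\bm{y}-\bm{x}\|_2^2$ for every $\bm{p}\in\partial g(\bm{x})$.

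Next I would unify the two cases of the update \eqref{eq:DCAiterframe}. When $\bm{x}_k\neq\bm{0}$, $h$ is differentiable at $\bm{x}_k$ and $\nabla h(\bm{x}_k) = \lambda\bm{d}_{\ell_2}(\bm{x}_k)$. When $\bm{x}_k=\bm{0}$, since $\partial h(\bm{0}) = \lambda\alpha\gamma\{\bm{v}:\|\bm{v}\|_2\le 1\}$ (the $\ell_2^2$ part contributing $\bm{0}$), we have $\bm{0}\in\partial h(\bm{0})$, and the update reduces to $\bm{x}_{k+1}\in\arg\min_{\bm{x}} g(\bm{x}) - \langle \bm{0},\bm{x}\rangle$. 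Thus in both cases there exists $\bm{y}_k\in\partial h(\bm{x}_k)$ such that
\begin{equation*}
\bm{x}_{k+1} = \arg\min_{\bm{x}\in\mathbb{R}^n} \; g(\bm{x}) - \langle \bm{y}_k,\bm{x}\rangle.
\end{equation*}
The first-order optimality condition for this strongly convex subproblem yields $\bm{y}_k\in\partial g(\bm{x}_{k+1})$.

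I would then apply strong convexity of $g$ at $\bm{x}_{k+1}$ with subgradient $\bm{y}_k$ to obtain
\begin{equation*}
g(\bm{x}_k) \;\ge\; g(\bm{x}_{k+1}) + \langle \bm{y}_k, \bm{x}_k - \bm{x}_{k+1}\rangle + \tfrac{\mu_g}{2}\|\bm{x}_k-\bm{x}_{k+1}\|_2^2,
\end{equation*}
and combine it with the convexity of $h$ at $\bm{x}_k$, which gives $h(\bm{x}_{k+1}) \ge h(\bm{x}_k) + \langle \bm{y}_k, \bm{x}_{k+1}-\bm{x}_k\rangle$, equivalently $h(\bm{x}_k) - h(\bm{x}_{k+1}) \le \langle \bm{y}_k, \bm{x}_k-\bm{x}_{k+1}\rangle$. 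Subtracting the two inequalities, the cross terms $\langle \bm{y}_k, \bm{x}_k-\bm{x}_{k+1}\rangle$ cancel, and the desired bound $\mathcal{H}(\bm{x}_k) - \mathcal{H}(\bm{x}_{k+1}) \ge c\|\bm{x}_k-\bm{x}_{k+1}\|_2^2$ follows with $c = \mu_g/2$.

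There is no real obstacle: the only delicate point is to justify that the $\bm{x}_k=\bm{0}$ branch fits the same template, which I handle by exhibiting $\bm{0}\in\partial h(\bm{0})$. Everything else is the textbook strong-convex-plus-convex DCA descent estimate, made quantitative by the explicit value $\mu_g = \lambda_{\min}(\bm{A}^\top\bm{A}) + 3\lambda(1-\gamma) > 0$ guaranteed by $\gamma\in(0,1)$ and $\lambda>0$.
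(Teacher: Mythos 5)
Your proof is correct and follows essentially the same argument as the paper: $\mu_g$-strong convexity of $g$, convexity of $h$, and the subproblem optimality condition $\bm{y}_k\in\partial g(\bm{x}_{k+1})$, with the cross terms cancelling to leave the $\tfrac{\mu_g}{2}\|\bm{x}_k-\bm{x}_{k+1}\|_2^2$ bound. The only (harmless) difference is cosmetic: you absorb the $\bm{x}_k=\bm{0}$ branch into the general template by choosing $\bm{0}\in\partial h(\bm{0})$, whereas the paper treats that case separately using $h(\bm{x}_{k+1})\ge 0$; both are valid.
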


\begin{proof}
	The quadratic function $g_{\mathrm{quad}}(\bm{x})= \tfrac{1}{2}\|\bm{A x}-\bm{b}\|_2^2+ \tfrac{3\lambda(1-\gamma)}{2}\|\bm{x}\|_2^2$ has Hessian matrix $ \nabla^2 g_{\mathrm{quad}}(\bm{x}) = \bm{A}^\top\bm{A} + 3\lambda(1-\gamma)\bm{I}\succ 0$, 
	implying that $g_{\mathrm{quad}}$ is $\mu_g$-strongly convex.
	Since $\lambda\gamma\|\bm{x}\|_1$ is convex, we know that $g$ is  $\mu_g$-strongly convex. Hence, for any $\bm{x}, \bm{y}\in\mathbb{R}^n$ and $\bm{s}\in\partial g(\bm{y})$, one has
	\begin{equation}\label{eq:strongcvx}
		g(\bm{x}) \ge g(\bm{y})
		+ \langle \bm{s},\,\bm{x}-\bm{y}\rangle
		+ \frac{\mu_g}{2}\|\bm{x}-\bm{y}\|_2^2.
	\end{equation}
	For $\bm{y}\neq0$, the subdifferential of $h$ satisfies $\partial h(\bm{y}) = \lambda(\alpha\gamma\,\bm{y}/\|\bm{y}\|_2 + (1-\gamma)\bm{y}) = \lambda\,\bm{d}_{\ell_2}(\bm{y})$. It follows from the convexity of $h$ that 
	\begin{equation}\label{eq:convex_ieee}
		h(\bm{y}) \ge h(\bm{x}) + \langle \bm{s},\,\bm{y}-\bm{x}\rangle,
		\quad \forall \bm{s}\in\partial h(\bm{x}).
	\end{equation}
	
	When $\bm{x}_k\neq0$, the optimality condition of DCEN-DCA subproblem  gives $\bm{0} \in \partial g(\bm{x}_{k+1}) - \lambda\,\bm{d}_{\ell_2}(\bm{x}_k)$, that is, $\bm{s}_g := \lambda\,\bm{d}_{\ell_2}(\bm{x}_k) \in \partial g(\bm{x}_{k+1})$. Applying \eqref{eq:strongcvx} with $\bm{x}=\bm{x}_k$, $\bm{y}=\bm{x}_{k+1}$, and $\bm{s}=\bm{s}_g$ yields
	\begin{equation}\label{eq:gdiff_ieee}
		g(\bm{x}_k)-g(\bm{x}_{k+1})
		\ge \lambda\langle \bm{d}_{\ell_2}(\bm{x}_k),\,\bm{x}_k-\bm{x}_{k+1}\rangle
		+ \frac{\mu_g}{2}\|\bm{x}_k-\bm{x}_{k+1}\|_2^2.
	\end{equation}
	By \eqref{eq:convex_ieee}, taking
	$\bm{x}=\bm{x}_k$, $\bm{y}=\bm{x}_{k+1}$, and
	$\bm{s}=\lambda\bm{d}_{\ell_2}(\bm{x}_k)\in\partial h(\bm{x}_k)$ gives
	\begin{equation}\label{eq:hdiff_ieee}
		h(\bm{x}_k)-h(\bm{x}_{k+1})
		\le \lambda\langle \bm{d}_{\ell_2}(\bm{x}_k),\,\bm{x}_k-\bm{x}_{k+1}\rangle.
	\end{equation}
	Substituting \eqref{eq:gdiff_ieee} and \eqref{eq:hdiff_ieee} into $\mathcal{H}(x_k) - \mathcal{H}(x_{k+1}) = [g(x_k) - g(x_{k+1})] - [h(x_k) - h(x_{k+1})]$, we obtain
	\begin{equation*}
		\begin{aligned}
			&\mathcal{H}(\bm{x}_k)-\mathcal{H}(\bm{x}_{k+1})\\ 
			&\ge ( \lambda \langle d_{\ell_2}(\bm{x}_k), \bm{x}_k - \bm{x}_{k+1} \rangle + \frac{\mu_g}{2} \|\bm{x}_k - \bm{x}_{k+1}\|_2^2 ) \\
			&\quad - \lambda \langle d_{\ell_2}(\bm{x}_k), \bm{x}_k - \bm{x}_{k+1} \rangle = \frac{\mu_g}{2} \|\bm{x}_k - \bm{x}_{k+1}\|_2^2.
		\end{aligned}
	\end{equation*}

	When $\bm{x}_k=\bm{0}$, the subproblem of DCEN-DCA reduces to
	$\bm{x}_{k+1}=\arg\min g(\bm{x})$, implying
	$\bm{0}\in\partial g(\bm{x}_{k+1})$.
	Applying \eqref{eq:strongcvx} with $\bm{x}=\bm{0}$,
	$\bm{y}=\bm{x}_{k+1}$, and $\bm{s}=\bm{0}$ gives
	\begin{equation*}
		\begin{aligned}
			\mathcal{H}(\bm{0}) - \mathcal{H}(\bm{x}_{k+1}) 
			&\ge g(\bm{0}) - g(\bm{x}_{k+1}) \quad (\text{since } h(\bm{x}_{k+1}) \ge 0) \\
			&\overset{\eqref{eq:strongcvx}}{\ge} \frac{\mu_g}{2} \|\bm{x}_{k+1}\|_2^2 = \frac{\mu_g}{2} \|\bm{x}_k - \bm{x}_{k+1}\|_2^2.
		\end{aligned}
	\end{equation*}
	Thus, one has $\mathcal{H}(x_k) - \mathcal{H}(x_{k+1}) \ge \frac{\mu_g}{2} \|x_k - x_{k+1}\|_2^2$.
\end{proof}

\begin{theorem}\label{thm:subsequenceConvergence}
	Let $\{\bm{x}_k\}$ be the sequence generated by the iteration scheme  \eqref{eq:DCAiterframe} with parameters $\lambda>0$, $\alpha\in(0,1)$, and $\gamma\in(0,1)$. Then the following statements hold:
	\begin{enumerate}
		\item[(a)] $\{\bm{x}_k\}$ is bounded;
		\item[(b)] $\|\bm{x}_{k+1} - \bm{x}_k\|_2 \to 0$ as $k \to \infty$;
		\item[(c)] Every limit point $\bm{x}^*$ of $\{\bm{x}_k\}$ satisfies $\bm{0} \in \partial \mathcal{H}(\bm{x}^*)$.
	\end{enumerate}
\end{theorem}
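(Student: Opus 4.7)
The plan is to build all three statements on Theorem~\ref{thm:Sufficient Decrease} and on the coercivity of $\mathcal{H}$ established in Proposition~\ref{prop:existence}. For (a), I will observe that \eqref{eq:Sufficient Decrease} makes $\{\mathcal{H}(\bm{x}_k)\}$ monotonically nonincreasing, so every $\bm{x}_k$ lies in the sublevel set $\{\bm{x}\in\mathbb{R}^n : \mathcal{H}(\bm{x})\le\mathcal{H}(\bm{x}_0)\}$, which is compact by coercivity and lower semicontinuity; boundedness of $\{\bm{x}_k\}$ is then immediate. For (b), the same coercivity gives $\mathcal{H}^\star := \inf_{\bm{x}}\mathcal{H}(\bm{x}) > -\infty$, and telescoping \eqref{eq:Sufficient Decrease} from $k=0$ to $N$ produces $c\sum_{k=0}^{N}\|\bm{x}_{k+1}-\bm{x}_k\|_2^2 \le \mathcal{H}(\bm{x}_0) - \mathcal{H}^\star$; letting $N\to\infty$ the series converges and hence $\|\bm{x}_{k+1}-\bm{x}_k\|_2\to 0$.

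Part (c) is the harder one. Fix a limit point $\bm{x}^*$ with $\bm{x}_{k_j}\to\bm{x}^*$; by (b) the shifted sequence $\bm{x}_{k_j+1}\to\bm{x}^*$ as well. The strategy is to pass to the limit in the first-order optimality condition of the DCA subproblem \eqref{eq:DCAiterframe}, namely $\lambda\bm{d}_{\ell_2}(\bm{x}_{k_j}) \in \partial g(\bm{x}_{k_j+1})$ when $\bm{x}_{k_j}\neq\bm{0}$ and $\bm{0}\in\partial g(\bm{x}_{k_j+1})$ otherwise, and then invoke the closedness of the graph of the convex subdifferential $\partial g$ to obtain a corresponding inclusion at $\bm{x}^*$.

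If $\bm{x}^*\neq\bm{0}$, then $\bm{x}_{k_j}\neq\bm{0}$ eventually, $h$ is smooth on a neighborhood of $\bm{x}^*$ with $\nabla h(\bm{x}^*)=\lambda\bm{d}_{\ell_2}(\bm{x}^*)$, and continuity yields $\lambda\bm{d}_{\ell_2}(\bm{x}_{k_j}) \to \nabla h(\bm{x}^*)$. Closedness of $\partial g$ then gives $\nabla h(\bm{x}^*)\in\partial g(\bm{x}^*)$, i.e., $\bm{0}\in\partial g(\bm{x}^*)-\nabla h(\bm{x}^*)=\partial\mathcal{H}(\bm{x}^*)$.

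The main obstacle is the case $\bm{x}^*=\bm{0}$, because $\bm{d}_{\ell_2}(\bm{x})$ is discontinuous at the origin: the direction $\bm{x}/\|\bm{x}\|_2$ has no limit. I will split this into two situations. If infinitely many $\bm{x}_{k_j}$ equal $\bm{0}$, then the subproblem in \eqref{eq:DCAiterframe} forces $\bm{x}_{k_j+1}=\arg\min g$, which by strong convexity of $g$ is a unique vector; comparison with $\bm{x}_{k_j+1}\to\bm{0}$ forces this vector to be $\bm{0}$, whence $\bm{0}\in\partial g(\bm{0})$, and since $\bm{0}\in\partial h(\bm{0})$ we obtain $\bm{0}\in\partial g(\bm{0})-\partial h(\bm{0})\subseteq\partial\mathcal{H}(\bm{0})$. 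If instead $\bm{x}_{k_j}\neq\bm{0}$ eventually, I will use the decomposition $\bm{d}_{\ell_2}(\bm{x}_{k_j})=\alpha\gamma\,\bm{x}_{k_j}/\|\bm{x}_{k_j}\|_2+(1-\gamma)\bm{x}_{k_j}$; the second summand vanishes, while the unit vectors $\bm{u}_{k_j}:=\bm{x}_{k_j}/\|\bm{x}_{k_j}\|_2$ live on the compact unit sphere, so a further subsequence converges to some $\bm{u}^*$ with $\|\bm{u}^*\|_2=1$. Then $\lambda\bm{d}_{\ell_2}(\bm{x}_{k_j})\to\lambda\alpha\gamma\,\bm{u}^*\in\partial h(\bm{0})$, since $\partial h(\bm{0})$ is the closed Euclidean ball of radius $\lambda\alpha\gamma$, and closedness of $\partial g$ again yields $\lambda\alpha\gamma\,\bm{u}^*\in\partial g(\bm{0})$. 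Hence $\bm{0}\in\partial g(\bm{0})-\partial h(\bm{0})\subseteq\partial\mathcal{H}(\bm{0})$, completing the argument.
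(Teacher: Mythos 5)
Your proposal is correct and follows essentially the same route as the paper: parts (a) and (b) are proved identically via coercivity and telescoping the sufficient-descent inequality, and part (c) likewise passes to the limit in the subproblem optimality condition $\lambda\bm{d}_{\ell_2}(\bm{x}_{k_j})\in\partial g(\bm{x}_{k_j+1})$ and invokes closedness of the convex subdifferential graph. The only difference is cosmetic: where the paper extracts a convergent sequence of generic subgradients $\bm{s}_{k_j}\in\partial h(\bm{x}_{k_j})\cap\partial g(\bm{x}_{k_j+1})$ via local boundedness and closed graphs of $\partial g$ and $\partial h$, you work with the explicit subgradients and resolve the discontinuity of $\bm{d}_{\ell_2}$ at the origin by compactness of the unit sphere, ending (as the paper does) with the same identification of $\partial g(\bm{x}^*)-\partial h(\bm{x}^*)$ with $\partial\mathcal{H}(\bm{x}^*)$.
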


\begin{proof}
	(a) By the coercivity of $\mathcal{H}$ established in Proposition \ref{prop:existence} and $\mathcal{H}(\bm{x}_{k+1}) \leq \mathcal{H}(\bm{x}_k) \leq \mathcal{H}(\bm{x}_0)$ for any $k \geq 0$, it follows that the sequence ${\bm{x}_k}$ is bounded.
	
	(b) By the sufficient decrease property \eqref{eq:Sufficient Decrease} and Lemma \ref{prop:existence}, we have $\mathcal H(\bm{x}_{k+1}) \le \mathcal H(\bm{x}_k)$ for all $k\geq 0$ and  $\{\mathcal H(\bm{x}_k)\}$  admits a finite infimum
	$\underline{\mathcal H}:=\inf_{\bm{x}\in\mathbb R^n}\mathcal H(\bm{x})>-\infty$. It follows that the monotone sequence $\{\mathcal H(\bm{x}_k)\}$ is bounded below and convergent. Denote its limit by $\mathcal H_\infty$. Sum \eqref{eq:Sufficient Decrease} for $k=0,1,\dots,K-1$ to obtain the telescoping inequality $\mathcal H(\bm{x}_0)-\mathcal H(\bm{x}_K) \ge c\sum_{k=0}^{K-1}\|\bm{x}_k-\bm{x}_{k+1}\|_2^2$.
	Since $\mathcal H(\bm{x}_K)\downarrow\mathcal H_\infty$ and $\mathcal H(\bm{x}_0)-\mathcal H_\infty<+\infty$, it follows that
	$\sum_{k=0}^{\infty}\|\bm{x}_k-\bm{x}_{k+1}\|_2^2 \le \frac{1}{c}\big(\mathcal H(\bm{x}_0)-\mathcal H_\infty\big) < +\infty$,
	which implies $\lim_{k\to\infty}\|\bm{x}_k-\bm{x}_{k+1}\|_2^2 = 0$ and so
	$\|\bm{x}_k-\bm{x}_{k+1}\|_2\to 0$.
	
	(c) Let $\bm{x}^*$ be a limit point of the sequence $\{\bm{x}_k\}$ generated by DCEN-DCA. Then there exists a subsequence $\{\bm{x}_{k_j}\}$ such that $\bm{x}_{k_j} \to \bm{x}^*$ as $j \to \infty$. Since $\|\bm{x}_{k+1} - \bm{x}_k\|_2 \to 0$, it follows that $\bm{x}_{k_j + 1} \to \bm{x}^*$. From the optimality condition of subproblem \eqref{eq:dca_subprob}, for each $k$, there exists $\bm{s}_k \in \partial h(\bm{x}_k)$ such that $\bm{s}_k \in \partial g(\bm{x}_{k+1})$. Since $\{\bm{x}_{k_j}\}$ and $\{\bm{x}_{k_j+1}\}$ are bounded and both $g$ and $h$ are proper, lower semicontinuous, and convex, the subdifferentials $\partial h$ and $\partial g$ are locally bounded \cite[Theorem 24.7]{Rockafellar1970}. Hence, the sequence $\{\bm{s}_{k_j}\}$ is bounded and admits a convergent subsequence (still denoted $\{\bm{s}_{k_j}\}$) with limit $\bm{s}^* \in \mathbb{R}^n$. Because $\partial h$ and $\partial g$  have closed graphs \cite[Theorem 24.4]{Rockafellar1970}, the convergences $\bm{x}_{k_j} \to \bm{x}^*$, $\bm{x}_{k_j+1} \to \bm{x}^*$, and $\bm{s}_{k_j} \to \bm{s}^*$, together with the inclusions $\bm{s}_{k_j} \in \partial h(\bm{x}_{k_j})$ and $\bm{s}_{k_j} \in \partial g(\bm{x}_{k_j+1})$, imply that $\bm{s}^* \in \partial h(\bm{x}^*)$ and $\bm{s}^* \in \partial g(\bm{x}^*)$. Thus, $\bm{0} \in \partial g(\bm{x}^*) - \partial h(\bm{x}^*) = \partial \mathcal{H}(\bm{x}^*)$ and so $\bm{x}^*$ is a stationary point of $\mathcal{H}$.
\end{proof}


\subsection{Global convergence and convergence rate}

This subsection presents the global convergence analysis of DCEN-DCA. We first  show that the sequence $\{\bm{x}^k\}$ generated by DCEN-DCA is convergent to a stationary point of $\mathcal{H}$ under mild assumptions. Our analysis follows  the work due to  \cite{boct2022extrapolated,tao2022minimization,wang2019global,zeng2021analysis}.

\begin{proposition}\label{prop:l1l2_constant_value}
	Let $\{\bm{x}_k\}$ be the sequence generated by DCEN-DCA for minimizing $\mathcal{H}(\bm{x})$. Then the following statements hold:
	\begin{enumerate}
		\item[(a)] The limit $\lim_{k \to \infty} \mathcal{H}(\bm{x}_k) = \zeta$ exists;
		\item[(b)] For any $\hat{\bm{x}} \in \mathcal{A}$, $\mathcal{H}(\hat{\bm{x}}) = \zeta$, where $\mathcal{A}$ is the set of accumulation points of the sequence $\{\bm{x}_k\}$.
	\end{enumerate}
\end{proposition}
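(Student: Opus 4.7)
The plan is to handle parts (a) and (b) in sequence, using the sufficient descent property established earlier together with the continuity of $\mathcal{H}$.

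For part (a), I would first invoke Theorem \ref{thm:Sufficient Decrease}, which yields
$\mathcal{H}(\bm{x}_k) - \mathcal{H}(\bm{x}_{k+1}) \ge c\,\|\bm{x}_k - \bm{x}_{k+1}\|_2^2 \ge 0$,
so the scalar sequence $\{\mathcal{H}(\bm{x}_k)\}$ is monotonically non-increasing. Next I would argue that it is bounded from below: this is immediate from $\mathcal{H}(\bm{x})\ge 0$ on $\mathbb{R}^n$, which I would check using $\|\bm{x}\|_1\ge\|\bm{x}\|_2$ (so that $\gamma(\|\bm{x}\|_1-\alpha\|\bm{x}\|_2)\ge\gamma(1-\alpha)\|\bm{x}\|_2\ge 0$) together with the nonnegativity of $\tfrac12\|\bm{A}\bm{x}-\bm{b}\|_2^2$ and $(1-\gamma)\|\bm{x}\|_2^2$. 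Alternatively, coercivity from Proposition \ref{prop:existence} already guarantees $\inf \mathcal{H} > -\infty$. The monotone convergence theorem then provides the existence of $\zeta := \lim_{k\to\infty}\mathcal{H}(\bm{x}_k)$.

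For part (b), I would fix an arbitrary $\hat{\bm{x}} \in \mathcal{A}$ and select a subsequence $\{\bm{x}_{k_j}\}\subset\{\bm{x}_k\}$ with $\bm{x}_{k_j}\to\hat{\bm{x}}$. Since each term of $\mathcal{H}$ (the quadratic data term, $\|\cdot\|_1$, $\|\cdot\|_2$, and $\|\cdot\|_2^2$) is continuous on $\mathbb{R}^n$, $\mathcal{H}$ itself is continuous, so $\mathcal{H}(\bm{x}_{k_j}) \to \mathcal{H}(\hat{\bm{x}})$. On the other hand, $\{\mathcal{H}(\bm{x}_{k_j})\}$ is a subsequence of the convergent sequence $\{\mathcal{H}(\bm{x}_k)\}$, hence also converges to $\zeta$. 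Uniqueness of limits in $\mathbb{R}$ gives $\mathcal{H}(\hat{\bm{x}}) = \zeta$, as required.

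The proof is largely routine once the sufficient descent inequality \eqref{eq:Sufficient Decrease} and the coercivity of $\mathcal{H}$ are in hand; there is no genuine obstacle. The only point deserving a moment of care is verifying continuity of $\mathcal{H}$, which reduces to observing that $\|\cdot\|_2$ remains continuous at the origin (even though the subdifferential of the concave part $-\lambda\alpha\gamma\|\cdot\|_2$ blows up there); this is why the argument works without any case split on whether accumulation points are zero or nonzero.
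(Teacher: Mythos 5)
Your proof is correct, and for part (b) it takes a genuinely simpler route than the paper. For (a) both arguments are essentially the same: monotonicity from the sufficient-descent inequality \eqref{eq:Sufficient Decrease} plus a lower bound on $\mathcal{H}$ (your direct nonnegativity check via $\|\bm{x}\|_1\ge\|\bm{x}\|_2$ is fine, and the coercivity fallback matches the paper's appeal to Proposition~\ref{prop:existence}). For (b), however, the paper argues as one would for a merely lower-semicontinuous objective: it extracts the DCA optimality condition $\bm{\xi}_{k_i}\in\partial h(\bm{x}_{k_i})\cap\partial g(\bm{x}_{k_i+1})$, uses local boundedness of $\partial h$ on the level set to pass to a convergent subsequence of subgradients, derives $\zeta\le\mathcal{H}(\hat{\bm{x}})$ from the subproblem inequality (implicitly also using $\|\bm{x}_{k+1}-\bm{x}_k\|_2\to 0$ so that $\bm{x}_{k_i+1}\to\hat{\bm{x}}$), and combines this with lower semicontinuity to get the reverse inequality. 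You instead observe that $\mathcal{H}$ is continuous on all of $\mathbb{R}^n$ (each summand — the quadratic fidelity term, $\|\cdot\|_1$, $\|\cdot\|_2$, $\|\cdot\|_2^2$ — is continuous), so $\mathcal{H}(\bm{x}_{k_j})\to\mathcal{H}(\hat{\bm{x}})$ along the subsequence and uniqueness of limits finishes the job. This is valid here and buys brevity; the paper's heavier machinery is what one would keep if the regularizer were extended to only lsc terms (e.g., an $\ell_0$ or indicator component), where your continuity shortcut would fail. One small imprecision: the subdifferential of the concave part does not ``blow up'' at the origin — $\partial\|\cdot\|_2(\bm{0})$ is the unit ball, hence bounded; it is only the gradient formula $\bm{x}/\|\bm{x}\|_2$ that is undefined there. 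This does not affect your argument, which only needs continuity of the norm.
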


\begin{proof}
	(a ) By Theorem \ref{thm:Sufficient Decrease} and Proposition \ref{prop:existence}, $\mathcal{H}(\bm{x}_k)$ is non-increasing and $\mathcal{H}$ is level-bounded. It follows that the limit $\lim_{k \to \infty} \mathcal{H}(\bm{x}_k) = \zeta$ exists. 
	
	(b) The update rule of DCEN-DCA  states that at each iteration $k$, there exists a subgradient $\bm{\xi}_k \in \partial h(\bm{x}_k)$ such that $\bm{x}_{k+1} \in \arg\min_{\bm{x} \in \mathbb{R}^n} \left\{ g(\bm{x}) - \langle \bm{\xi}_k, \bm{x} \rangle \right\}$, which implies $\bm{\xi}_k \in \partial g(\bm{x}_{k+1})$. 	Let $\hat{\bm{x}} \in \mathcal{A}$ be an arbitrary accumulation point. Then there is a subsequence $\{\bm{x}_{k_i}\}$ such that $\lim_{i \to \infty} \bm{x}_{k_i} = \hat{\bm{x}}$.  For any $\bm{\hat{z}} \in \mathbb{R}^n$, we have $g(\bm{x}_{k_i+1}) - \langle \bm{\xi}_{k_i}, \bm{x}_{k_i+1} \rangle \leq g(\bm{\hat{z}}) - \langle \bm{\xi}_{k_i}, \bm{\hat{z}} \rangle$. Choosing $\bm{\hat{z}} = \hat{\bm{x}}$ and rearranging yield
	\begin{equation}\label{eq:optim_cond}
		g(\bm{x}_{k_i+1}) \leq g(\hat{\bm{x}}) + \langle \bm{\xi}_{k_i}, \bm{x}_{k_i+1} - \hat{\bm{x}} \rangle.
	\end{equation}
	Since $\{\bm{x}_{k_i}\}$ and $\{\bm{x}_{k_i+1}\}$ are contained in the compact level set $\mathcal{S}_{\mathcal{H}} = \{\bm{x} : \mathcal{H}(\bm{x}) \leq \mathcal{H}(\bm{x}_0)\}$ and $h$ is convex and continuous on $\mathbb{R}^n$,  $\partial h$ is bounded on $\mathcal{S}_{\mathcal{H}}$. Consequently, the sequence of subgradients $\{\bm{\xi}_{k_i}\}$ is bounded, and thus possesses a convergent subsequence, denotes it again by  $\{\bm{\xi}_{k_i}\}$,  such that $\bm{\xi}_{k_i} \to \bm{\xi}^*$ for some $\bm{\xi}^* \in \mathbb{R}^n$.  Using \eqref{eq:optim_cond}, one has $
	\zeta = \lim_{i \to \infty} \mathcal{H}(\bm{x}_{k_i+1})\leq \limsup_{i \to \infty} \left[ g(\hat{\bm{x}}) - h(\bm{x}_{k_i+1}) + \langle \bm{\xi}_{k_i}, \bm{x}_{k_i+1} - \hat{\bm{x}} \rangle \right]$.
	By the continuity of $g$ and $h$, the term $\langle \bm{\xi}_{k_i}, \bm{x}_{k_i+1} - \hat{\bm{x}} \rangle$ converges to $0$. Therefore, the right-hand side equals $g(\hat{\bm{x}}) - h(\hat{\bm{x}}) = \mathcal{H}(\hat{\bm{x}})$, establishing $\zeta \leq \mathcal{H}(\hat{\bm{x}})$. On the other hand, since $\mathcal{H}$ is the difference of a continuous function $g$ and a lower semicontinuous function $h$, it is lower semicontinuous. Thus, $\mathcal{H}(\hat{\bm{x}}) \leq \liminf_{i \to \infty} \mathcal{H}(\bm{x}_{k_i}) = \zeta$ and so $\mathcal{H}(\hat{\bm{x}}) = \zeta$. As $\hat{\bm{x}}$ was chosen arbitrarily from $\mathcal{A}$, the proof is complete.
\end{proof}
\begin{lemma}[Uniformized K\L{} property {\cite{bolte2014proximal}}]\label{lem:Uniformized KL}
	Suppose that $\mathcal{P}$ is a proper closed function and let $\Theta$ be a compact set on which $\mathcal{P}$
	is constant. If $\mathcal{P}$ satisfies the KL property at every point of $\Theta$, then there exist
	constants $\varepsilon,p>0$ and a function $\phi\in\Phi_p$ such that $\phi'(\mathcal{P}(x)-\mathcal{P}(\hat{x}))\,\mathrm{dist}(0,\partial \mathcal{P}(x)) \ge 1$, for all $\hat{x}\in\Theta$ and all $x$ satisfying $\mathrm{dist}(x,\Theta)<\varepsilon$ and
	$\mathcal{P}(\hat{x})<\mathcal{P}(x)<\mathcal{P}(\hat{x})+p$,
	where $\Phi_p$ denotes the class of concave continuous functions
	$\phi:[0,p)\to\mathbb{R}_+$ with $\phi(0)=0$ that are continuously differentiable on $(0,p)$
	and satisfy $\phi'>0$.
\end{lemma}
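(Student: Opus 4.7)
The plan is to reduce the uniform KL inequality to a covering argument built from the pointwise KL property at each point of $\Theta$, exploiting that $\mathcal{P}$ takes a single value $\zeta$ on $\Theta$ so that all local KL inequalities are expressed in terms of the same shift $\mathcal{P}(x)-\zeta$. First I would fix $\hat{x}\in\Theta$ and invoke Definition \ref{def:KL} to obtain an open neighborhood $\mathcal{O}_{\hat{x}}$, a scalar $p_{\hat{x}}>0$, and a desingularizer $\phi_{\hat{x}}\in\Phi_{p_{\hat{x}}}$ such that $\phi_{\hat{x}}'(\mathcal{P}(x)-\zeta)\,\mathrm{dist}(0,\partial\mathcal{P}(x))\ge 1$ whenever $x\in\mathcal{O}_{\hat{x}}$ and $\zeta<\mathcal{P}(x)<\zeta+p_{\hat{x}}$.

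Next I would use compactness of $\Theta$ to extract a finite subcover $\{\mathcal{O}_{\hat{x}_i}\}_{i=1}^N$ from $\{\mathcal{O}_{\hat{x}}\}_{\hat{x}\in\Theta}$. Because $\Theta$ is compact and contained in the open set $U:=\bigcup_{i=1}^{N}\mathcal{O}_{\hat{x}_i}$, a standard tube argument (applied to the continuous map $x\mapsto\mathrm{dist}(x,U^c)$ on $\Theta$) produces $\varepsilon>0$ with $\{x:\mathrm{dist}(x,\Theta)<\varepsilon\}\subseteq U$. Then I would set $p:=\min_{1\le i\le N}p_{\hat{x}_i}>0$ and construct the uniform desingularizer as $\phi(t):=\sum_{i=1}^{N}\phi_{\hat{x}_i}(t)$ on $[0,p)$. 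The class $\Phi_p$ is closed under finite positive sums: concavity, continuity on $[0,p)$, $C^1$ regularity on $(0,p)$, the anchor $\phi(0)=0$, and positivity of the derivative all pass to $\phi$ term by term.

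Finally, given any $\hat{x}\in\Theta$ and any $x$ with $\mathrm{dist}(x,\Theta)<\varepsilon$ and $\zeta<\mathcal{P}(x)<\zeta+p$, the tube containment yields an index $i$ with $x\in\mathcal{O}_{\hat{x}_i}$, and $p\le p_{\hat{x}_i}$ guarantees that the local KL inequality at $\hat{x}_i$ applies. Since $\phi_{\hat{x}_j}'>0$ for every $j$, one has $\phi'(\mathcal{P}(x)-\zeta)\ge\phi_{\hat{x}_i}'(\mathcal{P}(x)-\zeta)$, so multiplying by the nonnegative quantity $\mathrm{dist}(0,\partial\mathcal{P}(x))$ delivers $\phi'(\mathcal{P}(x)-\zeta)\,\mathrm{dist}(0,\partial\mathcal{P}(x))\ge 1$, which is the desired uniform estimate (noting that the hypothesis $\mathcal{P}(\hat{x})<\mathcal{P}(x)<\mathcal{P}(\hat{x})+p$ in the statement is equivalent to $\zeta<\mathcal{P}(x)<\zeta+p$ because $\mathcal{P}\equiv\zeta$ on $\Theta$).

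The main obstacle I anticipate is not the algebraic manipulation but the careful handling of the covering step: one must ensure that the finite subcover produces a genuine uniform radius $\varepsilon$ independent of the point $\hat{x}\in\Theta$, and that the summed function $\phi$ does not inadvertently lose any membership condition for $\Phi_p$ (in particular, one should verify that differentiability at the right endpoint of each $(0,p_{\hat{x}_i})$ is unneeded because we truncate to the common interval $(0,p)$). Everything else follows by assembling the pointwise KL property with this uniformization.
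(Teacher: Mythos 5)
Your proposal is correct: the paper itself states this lemma without proof, simply citing \cite{bolte2014proximal}, and your argument—pointwise KL neighborhoods, a finite subcover of the compact set $\Theta$, a tube radius $\varepsilon$, $p$ as the minimum of the local thresholds, and the summed desingularizer $\phi=\sum_i\phi_{\hat{x}_i}$ whose derivative dominates each $\phi_{\hat{x}_i}'$—is exactly the proof given in that reference (Lemma~6 of Bolte--Sabach--Teboulle). No gaps; the only cosmetic remark is that some $p_{\hat{x}_i}$ may equal $+\infty$, which does not affect taking the minimum.
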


Let $\mathcal{B}_{\epsilon}(\bm{0}) = \left\{ \bm{x} : \|\bm{x}\|_2 \leq \epsilon \right\} \subset \mathbb{R}^n$ be the closed ball of radius $\epsilon > 0$ centered at the origin, and let $\mathrm{crit}(\mathcal{H})$ denote the set of all critical points of $\mathcal{H}$. We now establish the global convergence of DCEN–DCA.

\begin{theorem}[Global convergence of DCEN-DCA]
	Let the sequence $\{\bm{x}_k\}$ be generated by DCEN-DCA. For parameters $\alpha \in (0,1)$, $\lambda > 0$, and $\gamma \in (0,1)$, if the subgradients of $g$ and $h$ at $\bm{0}$ satisfy
	$
	(-\bm{A}^{\top} \bm{b} + \partial (\lambda \gamma \|\cdot\|_1)(\bm{0})) \;\cap\; \partial (\lambda \alpha \gamma \|\cdot\|_2)(\bm{0})= \emptyset,
	$
	then the following statements hold:
	\begin{enumerate}
		\item[(a)] $\displaystyle\lim_{k\to\infty}{\rm dist}(\bm{0},\partial \mathcal{H}(\bm{x}_k)) = 0$;
		\item[(b)] The sequence $\{\bm{x}_k\}$ converges to a stationary point of $\mathcal{H}$ and $\sum_{k=1}^\infty \|\bm{x}_k - \bm{x}_{k-1}\|_2 < \infty$.
	\end{enumerate}
\end{theorem}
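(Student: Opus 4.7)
The plan is to verify the three standard ingredients of the Attouch--Bolte--Svaiter template along $\{\bm{x}_k\}$: sufficient decrease (already proved in Theorem \ref{thm:Sufficient Decrease}), a relative-error bound for the subgradients, and the K\L{} inequality, and then to telescope to obtain summability of the successive differences. The first step I would take is to show that $\{\bm{x}_k\}$ eventually stays uniformly away from the origin. Computing the subdifferentials of the two convex components at $\bm{0}$ directly yields $\partial g(\bm{0})=-\bm{A}^\top\bm{b}+\partial(\lambda\gamma\|\cdot\|_1)(\bm{0})$ and $\partial h(\bm{0})=\partial(\lambda\alpha\gamma\|\cdot\|_2)(\bm{0})$, so the hypothesis is exactly $\partial g(\bm{0})\cap\partial h(\bm{0})=\emptyset$, i.e.\ $\bm{0}\notin\partial g(\bm{0})-\partial h(\bm{0})$. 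Hence $\bm{0}$ is not a DC stationary point, and Theorem \ref{thm:subsequenceConvergence}(c) excludes it from the accumulation set $\mathcal{A}$. Combined with the boundedness in Theorem \ref{thm:subsequenceConvergence}(a), this produces $\epsilon_0>0$ and $K_0\in\mathbb{N}$ such that $\|\bm{x}_k\|_2\ge\epsilon_0$ for every $k\ge K_0$.

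Next I would establish the relative-error bound, which settles part (a) immediately. On the closed region $\Omega_{\epsilon_0}=\{\bm{x}:\|\bm{x}\|_2\ge\epsilon_0\}$ the concave part is continuously differentiable with $\nabla h(\bm{x})=\lambda\bm{d}_{\ell_2}(\bm{x})$, and a direct estimate of the Jacobian of $\bm{d}_{\ell_2}$ shows that $\bm{d}_{\ell_2}$ is Lipschitz on $\Omega_{\epsilon_0}$ with a constant $L_h$ depending only on $\epsilon_0,\alpha,\gamma,\lambda$. The DCEN--DCA optimality condition from \eqref{eq:DCAiterframe} reads $\lambda\bm{d}_{\ell_2}(\bm{x}_k)\in\partial g(\bm{x}_{k+1})$, so for $k\ge K_0$ the vector $\bm{w}_{k+1}:=\lambda\bigl(\bm{d}_{\ell_2}(\bm{x}_k)-\bm{d}_{\ell_2}(\bm{x}_{k+1})\bigr)$ belongs to $\partial g(\bm{x}_{k+1})-\nabla h(\bm{x}_{k+1})\subset\partial\mathcal{H}(\bm{x}_{k+1})$ and satisfies $\|\bm{w}_{k+1}\|_2\le L_h\,\|\bm{x}_{k+1}-\bm{x}_k\|_2$. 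Statement (a) then follows from $\|\bm{x}_{k+1}-\bm{x}_k\|_2\to 0$ (Theorem \ref{thm:subsequenceConvergence}(b)).

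For statement (b) I would invoke the K\L{} machinery. The function $\mathcal{H}$ is semi-algebraic---each summand is either a polynomial or an $\ell_p$-norm with $p\in\{1,2\}$---hence a K\L{} function. The accumulation set $\mathcal{A}$ is compact, and $\mathcal{H}\equiv\zeta$ on $\mathcal{A}$ by Proposition \ref{prop:l1l2_constant_value}, so Lemma \ref{lem:Uniformized KL} supplies a desingularizer $\phi\in\Phi_p$ such that $\phi'\bigl(\mathcal{H}(\bm{x}_k)-\zeta\bigr)\,\mathrm{dist}\bigl(\bm{0},\partial\mathcal{H}(\bm{x}_k)\bigr)\ge 1$ for all $k$ sufficiently large. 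Concavity of $\phi$, the sufficient-decrease estimate $c\|\bm{x}_{k+1}-\bm{x}_k\|_2^2\le\mathcal{H}(\bm{x}_k)-\mathcal{H}(\bm{x}_{k+1})$, and the relative-error bound from the previous step produce, after Young's inequality $2ab\le a^2+b^2$, the telescoping estimate $2\|\bm{x}_{k+1}-\bm{x}_k\|_2\le\|\bm{x}_k-\bm{x}_{k-1}\|_2+\tfrac{L_h}{c}\bigl(\phi(\mathcal{H}(\bm{x}_k)-\zeta)-\phi(\mathcal{H}(\bm{x}_{k+1})-\zeta)\bigr)$. Summing over $k$ yields $\sum_k\|\bm{x}_{k+1}-\bm{x}_k\|_2<\infty$, so $\{\bm{x}_k\}$ is Cauchy; its limit lies in $\mathcal{A}$ and is therefore a stationary point of $\mathcal{H}$.

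The hard part will be the very first step: because the concave part contains $\lambda\alpha\gamma\|\cdot\|_2$, the subdifferential $\partial h$ blows up near $\bm{0}$ and no global Lipschitz constant exists, so the relative-error inequality that underpins the K\L{} argument cannot hold without first ruling $\bm{0}$ out as an accumulation point. The non-intersection hypothesis in the statement is engineered precisely for this purpose: once it declares $\bm{0}$ non-stationary, Theorem \ref{thm:subsequenceConvergence}(c) pushes the tail of the sequence into the smooth region $\Omega_{\epsilon_0}$, after which everything reduces to a routine instance of the Attouch--Bolte--Svaiter framework.
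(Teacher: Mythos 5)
Your proposal is correct and follows essentially the same route as the paper: the non-intersection hypothesis rules out $\bm{0}$ as a stationary point so that the tail of the bounded sequence stays in a region where $\nabla h$ is Lipschitz, the DCA optimality condition then gives the relative-error bound $\mathrm{dist}(\bm{0},\partial\mathcal{H}(\bm{x}_{k+1}))\le L_h\|\bm{x}_{k+1}-\bm{x}_k\|$ proving (a), and (b) follows from the uniformized K\L{} inequality on the accumulation set (where $\mathcal{H}$ is constant by Proposition~\ref{prop:l1l2_constant_value}) combined with the sufficient-decrease estimate of Theorem~\ref{thm:Sufficient Decrease}. The only differences are minor: you additionally justify the K\L{} property via semi-algebraicity and close with the Attouch--Bolte--Svaiter Young's-inequality telescoping $2\|\bm{x}_{k+1}-\bm{x}_k\|_2\le\|\bm{x}_k-\bm{x}_{k-1}\|_2+\tfrac{L_h}{c}\bigl(\phi(\mathcal{H}(\bm{x}_k)-\zeta)-\phi(\mathcal{H}(\bm{x}_{k+1})-\zeta)\bigr)$, which correctly accounts for the one-index offset between the relative-error bound and the decrease, whereas the paper sums a direct one-step inequality (and also treats explicitly the degenerate case $\mathcal{H}(\bm{x}_{k_0})=\zeta$, a routine point you omit).
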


\begin{proof}
	(a) As $\mathcal{H}$ is bounded below, the sequence $\{\bm{x}_k\}$ is bounded and its set of accumulation points $\mathcal{A}$ is nonempty and compact. The assumption that $\partial g(\bm{0}) \cap \partial h(\bm{0}) = \emptyset$ implies that $\bm{0}$ is not a stationary point of $\mathcal{H}$. By Theorem~\ref{thm:subsequenceConvergence}, no accumulation point of $\{\bm{x}_k\}$ can be $\bm{0}$. Consequently, there exists a sufficiently small $\epsilon > 0$ such that $\mathcal{A} \subseteq \mathrm{crit}(\mathcal{H}) \subseteq B_{\epsilon}^c(\bm{0}) := \{\bm{x} \in \mathbb{R}^n : \|\bm{x}\|_2 > \epsilon\}$. Thus, for any $\delta > 0$, there is $K_0 > 0$ such that $\operatorname{dist}(\bm{x}_k$,  $\mathcal{A}) < \delta$ and $\bm{x}_k \in B_{\epsilon}^c(\bm{0})$  for all $k > K_0$, and so $\mathcal{H}$ is Lipschitz continuously differentiable on the open set $B_{\epsilon}^c(\bm{0})$. The optimality condition of \eqref{eq:dca_subprob} yields $\nabla h(\bm{x}_{k-1}) \in \partial g(\bm{x}_k)$. Since $g$ is convex and $h$ is continuously differentiable on $B_{\epsilon}^c(\bm{0})$, one has $\partial \mathcal{H}(\bm{x}_{k}) = \partial g(\bm{x}_{k}) - \nabla h(\bm{x}_{k})$ and so $\nabla h(\bm{x}_{k-1}) - \nabla h(\bm{x}_{k}) \in \partial \mathcal{H}(\bm{x}_{k})$, which implies $\mathrm{dist}\big(\bm{0}, \partial \mathcal{H}(\bm{x}_{k})\big) \le \|\nabla h(\bm{x}_{k-1}) - \nabla h(\bm{x}_{k})\|_2$. Since $\nabla h$ is Lipschitz continuous on $B_{\epsilon}^c(\bm{0})$ with the Lipschitz constant $ L_h = \lambda \left( \frac{\alpha \gamma}{\epsilon} + 1 - \gamma \right)> 0$, we have $\mathrm{dist}\big(\bm{0}, \partial \mathcal{H}(\bm{x}_{k})\big) \le L_h \|\bm{x}_k - \bm{x}_{k-1}\|_2$.
	
	(b) By part (c) of Theorem~\ref{thm:subsequenceConvergence}, it suffices to establish the convergence of the sequence $\{\bm{x}_k\}$. If there exists $k_0 \geq 1$ such that $\mathcal{H}(\bm{x}_{k_0}) = \zeta$, then the sequence $\{\bm{x}_k\}$ converges in finitely many steps. Hence, we assume, without loss of generality, that $\mathcal{H}(\bm{x}_k) > \zeta$ for all $k \geq 1$.
	Since $\mathcal{H}$ is a K\L{} function on the open set $B_{\epsilon}^c(\bm{0})$, Lemma~\ref{lem:Uniformized KL} and Proposition~\ref{prop:l1l2_constant_value} guarantee that there are  $\tilde{\epsilon} > 0$, $\sigma > 0$, and a continuous concave function $\phi \in \Phi_\sigma$ such that the K\L{} inequality $\phi'(\mathcal{H}(\bm{x}) - \zeta) \cdot \operatorname{dist}(\bm{0}, \partial \mathcal{H}(\bm{x})) \geq 1$ holds for all $\bm{x} \in \mathcal{U}$, where $\mathcal{U}:= \{ \bm{x} \in B_{\epsilon}^c(\bm{0}): \operatorname{dist}\big(\bm{x}, \mathcal{A}\big) < \tilde{\epsilon}\}\;\cap \{ \bm{x} \in B_{\epsilon}^c(\bm{0}): \zeta < \mathcal{H}(\bm{x}) < \zeta + \sigma\}$.
	Since $\mathcal{A}$ is the set of accumulation points of $\{\bm{x}_k\}$ and $\{\bm{x}_k\}$ is bounded, there exists $K_1 > 0$ such that $\operatorname{dist}\big(\bm{x}_k, \mathcal{A}\big) < \tilde{\epsilon}$ for all $k > K_1$. Moreover, since $\mathcal{H}(\bm{x}_k) \to \zeta$, there exists $K_2 > 0$ such that $\zeta < \mathcal{H}(\bm{x}_k) < \zeta + \sigma$ for all $k > K_2$. Letting $\widetilde{K} := \max\{K_0 + 1, K_1, K_2\}$, we conclude that $\bm{x}_k \in \mathcal{U}$ for all $k > \widetilde{K}$. Thus, one has $\phi'\big(\mathcal{H}(\bm{x}_k) - \zeta\big) \cdot \operatorname{dist}\big(\bm{0}, \partial \mathcal{H}(\bm{x}_k)\big) \geq 1$ , for any $k > \widetilde{K}$. Using the concavity of $\phi$ and Theorem \ref{thm:Sufficient Decrease}, for $\forall k > \widetilde{K}$, we have
	\begin{equation*}
		\begin{aligned}
			&\frac{\mu_g}{2} \|\bm{x}_k - \bm{x}_{k-1}\|_2^2\\ &\leq \big[ \phi\big(\mathcal{H}(\bm{x}_{k-1}) - \zeta\big) - \phi\big(\mathcal{H}(\bm{x}_k) - \zeta\big) \big] \cdot \operatorname{dist}\big(\bm{0}, \partial \mathcal{H}(\bm{x}_k)\big)  \\
			&\quad\quad\leq \big[ \phi\big(\mathcal{H}(\bm{x}_{k-1}) - \zeta\big) - \phi\big(\mathcal{H}(\bm{x}_k) - \zeta\big) \big] \cdot L_h \|\bm{x}_k - \bm{x}_{k-1}\|_2.
		\end{aligned}
	\end{equation*}
	Rearranging the above inequality yields
	\begin{equation}\label{eq:teleieq}
		\|\bm{x}_k - \bm{x}_{k-1}\|_2 \leq \frac{2L_h}{\mu_g} \big[ \phi\big(\mathcal{H}(\bm{x}_{k-1}) - \zeta\big) - \phi\big(\mathcal{H}(\bm{x}_k) - \zeta\big) \big].
	\end{equation}
	Summing the inequality \eqref{eq:teleieq} from $k = \widetilde{K}$ to $\infty$ and noting that $\phi \geq 0$, we obtain $\sum_{k=\widetilde{K}}^{\infty} \|\bm{x}_k - \bm{x}_{k-1}\|_2 \leq \frac{2L_h}{\mu_g} \phi\big(\mathcal{H}(\bm{x}_{\widetilde{K}}) - \zeta\big) < \infty$ and so  $\sum_{k=0}^{\infty} \|\bm{x}_k - \bm{x}_{k-1}\|_2 < \infty$.
\end{proof}

\begin{theorem}
	Let $\{\bm{x}_k\}$ be the sequence generated by DCEN-DCA. Define $\Lambda^* := \mathrm{supp}(\bm{x}^*)$ and $\Xi := \{ \bm{x} \in \mathbb{R}^n \mid \mathrm{supp}(\bm{x}) = \Lambda^*,\ \mathrm{sign}(\bm{x}) = \mathrm{sign}(\bm{x}^*) \}$. For parameters $\alpha \in (0,1)$, $\lambda > 0$, and $\gamma \in (0,1)$, if $\bm{b} \notin \mathrm{ker}(A^\top)$ and $\|\bm{A}^\top \bm{b}\|_2 > \lambda \gamma (\sqrt{n} + \alpha)$,  then the following statements hold:
	\begin{enumerate}
		\item[(a)] There exists an index $K$ such that $\bm{x}_k \in \Xi$ when $k \ge K$;
		\item[(b)] If the desingularizing function in the K\L{} inequality in Definition~\ref{lem:Uniformized KL} is given by $\phi(\upsilon)=C\,\upsilon^{1-\theta}$ for some $\theta\in[0,1)$ and $C>0$,  then there exist $\tilde{\gamma} > 0$ and $\epsilon>0$ such that $\{\bm{x}_k\}$ converges linearly when $0 < \gamma < \tilde{\gamma}$ and $\lambda\alpha>\epsilon\sigma_{\min} \left( \bm{A}_{\Lambda^*}^\top \bm{A}_{\Lambda^*}\right)$, i.e., there exist $q > 0$ and $\omega \in (0,1)$ such that $\|\bm{x}_k - \bm{x}_*\|_2 \le q \omega^k.$
	\end{enumerate}
\end{theorem}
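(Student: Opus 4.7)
For part (a), I would leverage the convergence $\bm{x}_k\to\bm{x}^*$ from the preceding global convergence theorem together with Lemma~\ref{lem:nonopt_zero}, which under the stated hypothesis $\|\bm{A}^\top\bm{b}\|_2>\lambda\gamma(\sqrt{n}+\alpha)$ forces $\bm{x}^*\neq\bm{0}$. Writing out the componentwise KKT condition of the convex subproblem~\eqref{eq:dca_subprob},
\begin{equation*}
\lambda d_{\ell_2,i}(\bm{x}_k)-[\bm{A}^\top(\bm{A}\bm{x}_{k+1}-\bm{b})]_i-3\lambda(1-\gamma)x_{k+1,i}\in\lambda\gamma\,\partial|x_{k+1,i}|,
\end{equation*}
the case $i\in\Lambda^*$ is handled by $|x^*_i|>0$ and continuity: eventually $|x_{k,i}|\ge|x^*_i|/2$ with the correct sign. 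For $i\notin\Lambda^*$, passing to the limit produces a selection $\xi_i\in[-1,1]$; the hypothesis $\|\bm{A}^\top\bm{b}\|_2>\lambda\gamma(\sqrt{n}+\alpha)$ combined with the quadratic regularizer delivers the strict inequality $|\xi_i|<1$, and Lemma~\ref{lem:proxm_operator_solu} (the soft-thresholding characterisation of the subproblem's minimiser) then forces $x_{k+1,i}=0$ exactly for all sufficiently large $k$. Taking $K$ as the maximum of the two finite identification indices yields $\bm{x}_k\in\Xi$ for $k\ge K$.

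For part (b), once $k\ge K$ the iterates belong to $\Xi$, so $\mathrm{supp}(\bm{x}_k)=\Lambda^*$ and $\|\bm{x}_k\|_2$ is bounded away from $0$. Restricted to $\mathbb{R}^{|\Lambda^*|}$, the $\ell_1$ term becomes linear in the frozen sign pattern and $\|\cdot\|_2$ is $C^2$ in a neighbourhood of $\bm{x}^*$, so $\mathcal{H}$ reduces locally to a $C^2$ function with Hessian
\begin{equation*}
\bm{H}_*=\bm{A}_{\Lambda^*}^\top\bm{A}_{\Lambda^*}+2\lambda(1-\gamma)\bm{I}-\frac{\lambda\alpha\gamma}{\|\bm{x}^*\|_2}\!\left(\bm{I}-\frac{\bm{x}^*_{\Lambda^*}\bm{x}^{*\top}_{\Lambda^*}}{\|\bm{x}^*\|_2^2}\right).
\end{equation*}
The plan is to show that the conditions $\gamma<\tilde\gamma$ and $\lambda\alpha>\epsilon\sigma_{\min}(\bm{A}_{\Lambda^*}^\top\bm{A}_{\Lambda^*})$ simultaneously yield a uniform lower bound on $\|\bm{x}^*\|_2$ and a correspondingly small value of $\lambda\alpha\gamma/\|\bm{x}^*\|_2$, so that $\bm{H}_*\succ\bm{0}$ with a quantitative spectral gap. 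This local strong convexity places the KL exponent of $\mathcal{H}$ at $\bm{x}^*$ inside $[0,1/2]$, compatible with the prescribed $\phi(\upsilon)=C\upsilon^{1-\theta}$.

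With $\theta\in[0,1/2]$, I would then combine three ingredients already in hand: Theorem~\ref{thm:Sufficient Decrease} supplies $\mathcal{H}(\bm{x}_k)-\mathcal{H}(\bm{x}_{k+1})\ge c\|\bm{x}_k-\bm{x}_{k+1}\|_2^2$, the proof of global convergence yields $\mathrm{dist}(\bm{0},\partial\mathcal{H}(\bm{x}_k))\le L_h\|\bm{x}_k-\bm{x}_{k-1}\|_2$, and the KL inequality gives $(\mathcal{H}(\bm{x}_k)-\zeta)^\theta\le C(1-\theta)\,\mathrm{dist}(\bm{0},\partial\mathcal{H}(\bm{x}_k))$. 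Setting $\Delta_k:=\mathcal{H}(\bm{x}_k)-\zeta$ produces the recursion $\Delta_k^{2\theta}\le\kappa(\Delta_{k-1}-\Delta_k)$, which for $2\theta\le 1$ gives geometric decay $\Delta_k\le\rho^k\Delta_0$ with $\rho\in(0,1)$. Feeding this back into sufficient decrease yields a summable geometric bound on $\|\bm{x}_{k+1}-\bm{x}_k\|_2$, and the tail estimate $\|\bm{x}_k-\bm{x}^*\|_2\le\sum_{j\ge k}\|\bm{x}_{j+1}-\bm{x}_j\|_2$ converts this to the advertised $\|\bm{x}_k-\bm{x}^*\|_2\le q\omega^k$.

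The principal obstacles will be (i) rigorously pinning down strict complementarity for inactive indices in part (a)---I expect this is precisely where the hypothesis $\|\bm{A}^\top\bm{b}\|_2>\lambda\gamma(\sqrt{n}+\alpha)$ plays its essential role beyond merely ensuring $\bm{x}^*\ne\bm{0}$---and (ii) the quantitative eigenvalue analysis of $\bm{H}_*$ that calibrates $\tilde\gamma$ and $\epsilon$ to guarantee a uniform spectral gap, requiring in particular a lower bound on $\|\bm{x}^*\|_2$ derived from the two parameter conditions.
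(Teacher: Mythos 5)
Your part (b) plan is essentially the paper's argument in slightly different clothing: the paper also works on the sign-fixed set $\Xi$, replaces $\mathcal{H}$ by the smooth reduced function $\widehat{\mathcal{H}}$ on $\mathbb{R}^{|\Lambda^*|}$, and establishes the K\L{} exponent $\tfrac12$ not via an explicit Hessian but via the two gradient bounds $\|\nabla\widehat{\mathcal{H}}(\bm{x}_{\Lambda^*})-\nabla\widehat{\mathcal{H}}(\bm{x}^*_{\Lambda^*})\|_2\ge\tau\|\bm{x}_{\Lambda^*}-\bm{x}^*_{\Lambda^*}\|_2$ (with $\tau=\sigma_{\min}(\bm{A}_{\Lambda^*}^\top\bm{A}_{\Lambda^*})+2\lambda(1-\gamma)-\lambda\gamma\alpha/\varepsilon>0$ for $\gamma$ below a threshold, and $\varepsilon$ chosen \emph{after} $\bm{x}^*$ with $\varepsilon<\min_{i\in\Lambda^*}|x_i^*|$) and Lipschitz continuity of $\nabla\widehat{\mathcal{H}}$; your Hessian computation encodes the same mechanism. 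Your final step runs the K\L{} recursion on the function values $\Delta_k$, whereas the paper runs it on the tail sums $S_k=\sum_{i\ge k}\|\bm{x}_{i+1}-\bm{x}_i\|_2$, obtaining $S_k\le\frac{C_1}{1+C_1}S_{k-1}$; both are standard and valid, so this difference is cosmetic. One small caution: the paper does not derive a lower bound on $\|\bm{x}^*\|_2$ from the parameter conditions, as you suggest; the radius $\varepsilon$ is calibrated to the limit point, and the parameter conditions only ensure $\tau>0$.

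The genuine gap is in your part (a). You want to force $x_{k+1,i}=0$ for $i\notin\Lambda^*$ by extracting a limiting subgradient selection $\xi_i$ and claiming $|\xi_i|<1$ (strict complementarity), asserting that this follows from $\|\bm{A}^\top\bm{b}\|_2>\lambda\gamma(\sqrt{n}+\alpha)$ together with the quadratic regularizer. That hypothesis is exactly the condition of Lemma~\ref{lem:nonopt_zero}: it rules out $\bm{0}$ as a stationary point and hence guarantees $\bm{x}^*\neq\bm{0}$, but it says nothing about the dual/subgradient values at the inactive coordinates of a \emph{nonzero} stationary point, so strict complementarity does not follow from it; in general it is an extra assumption, and without it the componentwise optimality conditions do not yield exact zeros at inactive indices after finitely many iterations. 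You flag this yourself as an open obstacle, which confirms the step is missing rather than merely unwritten. The paper avoids this route entirely: it proves (a) directly from the already-established convergence $\bm{x}_k\to\bm{x}^*$, arguing (by a subsequence-with-constant-support contradiction) that $\mathrm{supp}(\bm{x}_k)=\Lambda^*$ eventually, and then noting that $\|\bm{x}_k-\bm{x}^*\|_2<\min_{i\in\Lambda^*}|x_i^*|$ forces the signs to match on $\Lambda^*$. So to complete your proposal you would either need to supply a proof of strict complementarity under additional hypotheses, or switch to the paper's convergence-based identification argument.
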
 

\begin{proof}
	(a) Let $\Lambda^* = \operatorname{supp}(\bm{x}^*)$ and $\omega = \min_{i \in \Lambda^*} |x_i^*| > 0$. Suppose, for contradiction, that $\operatorname{supp}(\bm{x}_k)$ is not eventually equal to $\Lambda^*$. Then there exists a subsequence with constant support $\Lambda \neq \Lambda^*$. Since this subsequence also converges to $\bm{x}^*$, we must have $\Lambda^* = \Lambda$, a contradiction. Thus, $\operatorname{supp}(\bm{x}_k) = \Lambda^*$ for all sufficiently large $k$. Since $\bm{x}_k \rightarrow \bm{x}^*$, there exists $K$ such that $\|\bm{x}_k - \bm{x}^*\|_2 < \omega$ whenever $k \geq K$. For any $i \in \Lambda^*$, $|(\bm{x}_k)_i - x_i^*| \leq \|\bm{x}_k - \bm{x}^*\|_2 < \omega \leq |x_i^*|$, which implies $\operatorname{sign}((\bm{x}_k)_i) = \operatorname{sign}(x_i^*)$. It follows that $\bm{x}_k \in \Xi$ for all $k \geq K$.
	
	(b) To demonstrate that the function $\mathcal{H}$ restricted to the set $\Xi$ satisfies the K\L{} property with exponent $\frac{1}{2}$, we introduce an auxiliary function $\widehat{\mathcal{H}}: \mathbb{R}^{|\Lambda^*|} \to \mathbb{R}$, defined as $\widehat{\mathcal{H}}(\bm{u}) = \frac{1}{2} \|\bm{A}_{\Lambda^*} \bm{u} - \bm{b}\|_2^2 + \lambda \left( \gamma \|\bm{u}\|_1 + (1-\gamma) \|\bm{u}\|_2^2 \right) - \lambda \gamma \alpha \|\bm{u}\|_2$. Observe that for any $\bm{x} \in \Xi $, it holds that $ \widehat{\mathcal{H}}(\bm{x}_{\Lambda^*}) = H(\bm{x}) $. Define the set $\Xi_{\Lambda^*} := \{ \bm{u} \in \mathbb{R}^{|\Lambda^*|} \mid \bm{u} = \bm{x}_{\Lambda^*}, \, \bm{x} \in \Xi \}$. Then $\widehat{\mathcal{H}}$ is continuously differentiable on $\Xi_{\Lambda^*}$ and $\nabla \widehat{\mathcal{H}}(\bm{u}) = \bm{A}_{\Lambda^*}^\top (A_{\Lambda^*} \bm{u} - \bm{b}) + 2\lambda (1-\gamma) \bm{u} + \lambda \gamma \operatorname{sign}(\bm{u}) - \lambda \gamma \alpha \frac{\bm{u}}{\|\bm{u}\|_2}$ for all $\bm{u} \in \Xi_{\Lambda^*}$.
	Let $\epsilon < \omega$. Then it is evident that $\bm{x}^* \in B_{\epsilon}^c(\bm{0})$. Next, we verify the inequality $\| \nabla \widehat{\mathcal{H}}(\bm{x}_{\Lambda^*}) - \nabla \widehat{\mathcal{H}}(\bm{x}_{\Lambda^*}^*) \|_2 \geq \tau \| \bm{x}_{\Lambda^*} - \bm{x}_{\Lambda^*}^* \|_2$ for all $\bm{x} \in \Xi \cap B_{\varepsilon}^c(\bm{0})$, where $\tau > 0$. Indeed, let $B_{\epsilon}^c(\bm{0})_{\Lambda^*} := \{ \bm{u} \in \mathbb{R}^{|\Lambda^*|} \mid \bm{u} = \bm{x}_{\Lambda^*}, \, \bm{x} \in B_{\epsilon}^c(\bm{0}) \}$. Then for any $\bm{u}, \bm{v} \in \Xi_{\Lambda^*} \cap B_{\epsilon}^c(\bm{0})_{\Lambda^*}$, we have $\left\| \frac{\bm{u}}{\|\bm{u}\|_2} - \frac{\bm{v}}{\|\bm{v}\|_2} \right\|_2^2  = \frac{1}{\|\bm{u}\|_2 \|\bm{v}\|_2} \left( 2\|\bm{u}\|_2 \|\bm{v}\|_2 - 2 \langle \bm{u}, \bm{v} \rangle \right) \leq \frac{1}{\|\bm{u}\|_2 \|\bm{v}\|_2} \|\bm{u} - \bm{v}\|_2^2$. Set $\hat{L}=\frac{1}{\epsilon}$. Since $\|\bm{u}\|_2 \geq \epsilon$ and $\|\bm{v}\|_2 \geq \epsilon$, it follows that $\left\| \frac{\bm{u}}{\|\bm{u}\|_2} - \frac{\bm{v}}{\|\bm{v}\|_2} \right\|_2 \leq \frac{1}{\epsilon} \|\bm{u} - \bm{v}\|_2=\hat{L}\|\bm{u} - \bm{v}\|_2$. For any $\bm{x} \in \Xi \cap B_{\epsilon}^c(\bm{0})$, we have
		\begin{equation}
		\begin{aligned}
			& \| \nabla \widehat{\mathcal{H}}(\bm{x}_{\Lambda^*}) - \nabla \widehat{\mathcal{H}}(\bm{x}_{\Lambda^*}^*) \|_2 \\
			& = \Big\| ( \bm{A}_{\Lambda^*}^\top \bm{A}_{\Lambda^*} + 2\lambda (1-\gamma) \bm{I} ) \\
			& \quad \times (\bm{x}_{\Lambda^*} - \bm{x}_{\Lambda^*}^*) \\
			& \quad + \lambda \gamma ( \operatorname{sign}(\bm{x}_{\Lambda^*}) - \operatorname{sign}(\bm{x}_{\Lambda^*}^*) ) \\
			& \quad + \lambda \gamma \alpha \Big( \frac{\bm{x}_{\Lambda^*}}{\|\bm{x}_{\Lambda^*}\|_2} - \frac{\bm{x}_{\Lambda^*}^*}{\|\bm{x}_{\Lambda^*}^*\|_2} \Big) \Big\|_2 \\
			& \overset{(a)}{\geq} \big\| ( \bm{A}_{\Lambda^*}^\top \bm{A}_{\Lambda^*} + 2\lambda (1-\gamma) \bm{I} ) (\bm{x}_{\Lambda^*} - \bm{x}_{\Lambda^*}^*) \big\|_2 \\
			& \quad - \big\| \lambda \gamma \alpha \Big( \frac{\bm{x}_{\Lambda^*}}{\|\bm{x}_{\Lambda^*}\|_2} - \frac{\bm{x}_{\Lambda^*}^*}{\|\bm{x}_{\Lambda^*}^*\|_2} \Big) \big\|_2 \\
			& \geq \Big( \sigma_{\min}( \bm{A}_{\Lambda^*}^\top \bm{A}_{\Lambda^*} + 2\lambda (1-\gamma) \bm{I} ) - \frac{\lambda \gamma \alpha}{\varepsilon} \Big) \\
			& \quad \times \| \bm{x}_{\Lambda^*} - \bm{x}_{\Lambda^*}^* \|_2.
		\end{aligned}
	\end{equation}
	Letting $\bar{\gamma} = \frac{\sigma_{\min} \left( \bm{A}_{\Lambda^*}^\top \bm{A}_{\Lambda^*}\right)+2\lambda }{(\hat{L}\alpha+2)\lambda}$, then for $\alpha\in(0,1), \gamma\in(0,1)$, and $\lambda>0$, one has $\bar{\gamma}>0$. Put  $\tau = \sigma_{\min} \left( \bm{A}_{\Lambda^*}^\top \bm{A}_{\Lambda^*}\right) + 2\lambda (1-\gamma) - \frac{\lambda \gamma \alpha}{\varepsilon}$ with $0<\gamma<\bar{\gamma}<1$. Then $\tau>0$ and 
	\begin{equation}\label{eq:lowerbounderH}
		\| \nabla \widehat{\mathcal{H}}(\bm{x}_{\Lambda^*}) - \nabla \widehat{\mathcal{H}}(\bm{x}_{\Lambda^*}^*) \|_2 \geq \tau \| \bm{x}_{\Lambda^*} - \bm{x}_{\Lambda^*}^* \|_2.
	\end{equation}
	Additionally, we show that $\widehat{\mathcal{H}}$ is Lipschitz continuous on $\Xi_{\Lambda^*} \cap B_{\epsilon}^c(\bm{0})_{\Lambda^*}$. In fact, for any $\bm{u}, \bm{v} \in \Xi_{\Lambda^*} \cap B_{\epsilon}^c(\bm{0})_{\Lambda^*}$, we have
	\begin{align*}
		&\| \nabla \widehat{\mathcal{H}}(\bm{u}) - \nabla \widehat{\mathcal{H}}(\bm{v}) \|_2 
		= \left\| \left( \bm{A}_{\Lambda^*}^\top \bm{A}_{\Lambda^*} + 2\lambda (1-\gamma) \bm{I} \right) (\bm{u} - \bm{v}) \right. \\
		&\quad + \lambda \gamma \left( \operatorname{sign}(\bm{u}) - \operatorname{sign}(\bm{v}) \right) \\
		&\quad \left. + \lambda \gamma \alpha \left( \frac{\bm{u}}{\|\bm{u}\|_2} - \frac{\bm{v}}{\|\bm{v}\|_2} \right) \right\|_2 \\
		&\leq \lambda \gamma \alpha \hat{L} \| \bm{u} - \bm{v} \|_2 + \left\| \left( \bm{A}_{\Lambda^*}^\top \bm{A}_{\Lambda^*} + 2\lambda (1-\gamma) \bm{I} \right) (\bm{u} - \bm{v}) \right\|_2 \\
		&\leq L_{\widehat{\mathcal{H}}} \| \bm{u} - \bm{v} \|_2,
	\end{align*}
	where $L_{\widehat{\mathcal{H}}} := \lambda \gamma \alpha \hat{L} + \sigma_{\max} \left( \bm{A}_{\Lambda^*}^\top \bm{A}_{\Lambda^*} \right) + 2\lambda (1-\gamma)$. Thus, for any $\bm{x} \in \Xi \cap B_{\epsilon}^c(\bm{0})$, one has $\| \nabla \widehat{\mathcal{H}}(\bm{x}_{\Lambda^*}) - \nabla \widehat{\mathcal{H}}(\bm{x}_{\Lambda^*}^*) \|_2 \leq L_{\widehat{\mathcal{H}}} \| \bm{x}_{\Lambda^*} - \bm{x}_{\Lambda^*}^* \|_2$.
	From the quadratic upper bound of $\widehat{\mathcal{H}}$, we obtain 
	$\bigl| \mathcal{H}(\bm{x}) - \mathcal{H}(\bm{x}^*) \bigr| \leq  (\bm{x}_{\Lambda^*} - \bm{x}_{\Lambda^*}^*)^\top \nabla \widehat{\mathcal{H}}(\bm{x}_{\Lambda^*}^*) \allowbreak + \frac{L_{\widehat{\mathcal{H}}}}{2} \| \bm{x}_{\Lambda^*} - \bm{x}_{\Lambda^*}^* \|_2^2$.
	Note that $\bm{x}^*$ is a stationary point of $\mathcal{H}$, which implies $\nabla \widehat{\mathcal{H}}(\bm{x}_{\Lambda^*}^*) = 0$. Therefore, for any $\bm{x} \in \Xi \cap B_{\epsilon}^c(\bm{0})$, we have
	$\left| \mathcal{H}(\bm{x}) - \mathcal{H}(\bm{x}^*) \right| \leq \frac{L_{\widehat{\mathcal{H}}}}{2} \| \bm{x}_{\Lambda^*} - \bm{x}_{\Lambda^*}^* \|_2^2$.	From inequality \eqref{eq:lowerbounderH}, we know that for any $\bm{x} \in \Xi \cap B_{\epsilon}^c(\bm{0})$,
	$\left| \mathcal{H}(\bm{x}) - \mathcal{H}(\bm{x}^*) \right| \leq \hbar \, \| \nabla \widehat{\mathcal{H}}(\bm{x}_{\Lambda^*}) \|_2^2,
	$ where $\hbar = L_{\widehat{\mathcal{H}}} / (2\tau^2)$. Moreover, for $\bm{x} \in \mathcal{X} := \{ \bm{x} \in \mathbb{R}^n \mid \bm{x} \in \Xi \; \cap \; \mathcal{U}(\bm{x}^*) \text{ and } \zeta < \mathcal{H}(\bm{x}) < \zeta + \omega \}$, where $\mathcal{U}(\bm{x}^*)$ is a neighborhood of $\bm{x}^*$ such that $\mathcal{U}(\bm{x}^*) \subseteq B_{\epsilon}^c(\bm{0})$, we have
	$\operatorname{dist}^2(0, \partial \mathcal{H}(\bm{x}))= \inf_{\bm{\xi} \in \partial \mathcal{H}(\bm{x})} \| \bm{\xi} \|_2^2 \geq \inf_{\bm{\xi}  \in \partial \mathcal{H}(\bm{x})} \| \bm{\xi}_{\Lambda^*} \|_2^2 = \| \nabla \widehat{\mathcal{H}}(\bm{x}_{\Lambda^*}) \|_2^2 
	\geq | \mathcal{H}(\bm{x}) - \mathcal{H}(\bm{x}^*) |$ and so $\mathcal{H}$ satisfies the KL property with exponent $\frac{1}{2}$ on $\Xi$.
	
	Finally, we prove the linear convergence rate of DCEN-DCA. Letting $S_k = \sum_{i=k}^{\infty} \| \bm{x}_{i+1} - \bm{x}_i \|_2$, then for any $k\geq \widetilde{K}$, we have
	\begin{equation}
		\begin{aligned}
			S_k & \leq 2 \sum_{i=k}^{\infty} \left( \frac{L_h}{\mu_g} \left( \phi(\mathcal{H}(\bm{x}_{i-1}) - \zeta) - \phi(\mathcal{H}(\bm{x}_i) - \zeta) \right) \right)\\ &\leq \frac{2L_h}{\mu_g} \phi(\mathcal{H}(\bm{x}_{k-1}) - \zeta)
		\end{aligned}
	\end{equation}
	and so $S_k \leq \frac{2L_h}{\mu_g} \phi(\mathcal{H}(\bm{x}_{k-1}) - \zeta)$. From the definition of the desingularizing function $\phi$, one has $C (1-\theta) (\mathcal{H}(\bm{x}_{k-1}) - \zeta)^{-\theta} \operatorname{dist}(0, \partial \mathcal{H}(\bm{x}_{k-1})) \geq 1$. Since $\| \bm{x}_k - \bm{x}_{k-1} \|_2 = S_{k-1} - S_k$, we know that  $(\mathcal{H}(\bm{x}_{k-1}) - \zeta)^{\theta} \leq C L_h (1-\theta) (S_{k-1} - S_k)$. Let $\phi(\mathcal{H}(\bm{x}_{k-1}) - \zeta) = C (\mathcal{H}(\bm{x}_{k-1}) - \zeta)^{1-\theta}$. Then $C (\mathcal{H}(\bm{x}_{k-1}) - \zeta)^{1-\theta} \leq C \left( C L_h (1-\theta) (S_{k-1} - S_k) \right)^{\frac{1-\theta}{\theta}}$ 
	and so $S_k \leq \frac{2L_h}{\mu_g} \phi(\mathcal{H}(\bm{x}_{k-1}) - \zeta) \leq \frac{2L_h}{\mu_g} \cdot C \left( C L_h (1-\theta) (S_{k-1} - S_k) \right)^{\frac{1-\theta}{\theta}}$.
	Letting $C_1 = \frac{2L_h}{\mu_g} \cdot C \left( C L_h (1-\theta) \right)^{\frac{1-\theta}{\theta}}$, then one has $S_k \leq C_1 (S_{k-1} - S_k)^{\frac{1-\theta}{\theta}}$. Substituting $\theta = \frac{1}{2}$ yields 
	$C_1 = \frac{L_h^2C^2}{\mu_g}$ and $S_k \leq \frac{C_1^2 L_h^2}{\mu_g} (S_{k-1} - S_k).$
	Rearranging terms gives $S_k \leq \frac{C_1}{1 + C_1} S_{k-1}$.
	Thus, there exists a $K_3 > \widetilde{K}$ such that
	$\| \bm{x}_k - \bm{x}^* \|_2 \leq \sum_{i=k}^{\infty} \| \bm{x}_{i+1} - \bm{x}_i \|_2 = S_k \leq \frac{C_1}{1 + C_1} S_{k-1} \leq \left( \frac{C_1}{1 + C_1} \right)^{k-K_3} S_{K_3}$  for any $k > K_3$, which establishes linear convergence.
\end{proof}

\section{ADMM for Solving DCEN}\label{sec:admm}

In this section, we develop an ADMM for solving DCEN. To this end, we introduce an auxiliary variable $\bm{z}\in\mathbb{R}^n$ and impose $\bm{x}-\bm{z}=\bm{0}$. The equivalent constrained problem reads
\begin{equation}\label{eq:split_problem}
	\min_{\bm{x},\bm{z} \in \mathbb{R}^n}
	\; \ell(\bm{x}) + \lambda r(\bm{z}),
	\;\text{s.t.} \; \bm{x}-\bm{z}=\bm{0}.
\end{equation}
Then the augmented Lagrangian function associated with \eqref{eq:split_problem} is given by
\begin{equation}\label{eq:aug_lagrangian}
	\mathcal{L}_\rho(\bm{x},\bm{z},\bm{y})
	= \ell(\bm{x})
	+ \lambda r_{\gamma}(\bm{z}) + \bm{y}^\top(\bm{x}-\bm{z})
	+ \frac{\rho}{2}\|\bm{x}-\bm{z}\|_2^2,
\end{equation}
where $\bm{y}$ denotes the dual variable and $\rho>0$ is a penalty parameter. By introducing the scaled dual variable $\bm{u}=\bm{y}/\rho$, we obtain the following ADMM iteration scheme  
\begin{subequations}\label{eq:inneradmm_updates}
	\begin{empheq}[left=\empheqlbrace]{align}
		\bm{x}_{k+1} &= \arg\min_{\bm{x}\in \mathbb{R}^n}\;
		\ell(\bm{x}) + \frac{\rho}{2}\|\bm{x}-(\bm{z}_k-\bm{u}_k)\|_2^2, \label{eq:xstep_siopt} \\
		\bm{z}_{k+1} &= \arg\min_{\bm{z}\in \mathbb{R}^n}\;
		\lambda r_{\gamma}(\bm{z})	+ \frac{\rho}{2}\|\bm{z}-p_k\|_2^2, \label{eq:zstep_siopt} \\
		\bm{u}_{k+1} &= \bm{u}_k + \bm{x}_{k+1}-\bm{z}_{k+1}. \label{eq:ustep_siopt}
	\end{empheq}
\end{subequations}
where $p_k=\bm{x}_{k+1}+\bm{u}_k$. The $\bm{x}$-subproblem \eqref{eq:xstep_siopt} is a quadratic program with the closed-form solution $\bm{x}_{k+1} = (\bm{A}^\top\bm{A}+\rho\bm{I})^{-1}(\bm{A}^\top\bm{b} + \rho(\bm{z}_k-\bm{u}_k))$. Using the SMW identity , the $\bm{x}$-update can be rewritten as $\bm{x}_{k+1} = \frac{1}{\rho}\bm{\tilde{b}}_k - \frac{1}{\rho^2}\bm{A}^\top \bm{M}^{-1}(\bm{A}\bm{\tilde{b}}_k)$, where $\bm{\tilde{b}}_k = \bm{A}^\top\bm{b} + \rho(\bm{z}_k - \bm{u}_k)$ and $\bm{M} = \bm{A}\bm{A}^\top + \rho\bm{I} \in \mathbb{R}^{m\times m}$. The $\bm{z}$–subproblem admits a closed-form proximal solution. Specifically, it is updated as $\bm{z}_{k+1} = \mathrm{prox}_{\lambda/\rho r_{\gamma}}\left(\bm{x}_{k+1} + \bm{u}_k\right)$. We summarize the procedure in Algorithm \ref{alg:siopt_admm}. For more discussions on the convergence analysis of the algorithm, interested readers are referred to \cite{tao2022minimization, wang2022minimizing}.

\section{Numerical experiments}\label{sec:NumerExpms}

In this section, we validate the effectiveness of the DCEN method through numerical experiments on sparse signal recovery, MRI image reconstruction, and high-dimensional variable selection. All computational experiments were carried out on a PC equipped with an Intel Core i9\textendash12900H processor (2.50\,GHz), and the simulation environment was implemented on the MATLAB R2025b platform. We compare the DCEN model with the state-of-the-art models and algorithms in sparse recovery, including $\ell_1$-ADMM, $\ell_p (p=1/2)$, IRLSLp$ (p=1/2)$, $\ell_{1}/\ell_{\infty}$-FISTA, $\ell_{1}/\ell_{\infty}$-ADMM, $\ell_1-\alpha\ell_2$-ADMM, $\ell_1-\alpha\ell_2$-IAADMM ($\alpha$ is iteratively adjusted), and $\ell_1-\alpha\ell_2$-DCA, for noiseless and noisy cases in subsections \ref{sec:Noisefree} and \ref{sec:Noise}, respectively. 

\subsection{Parameter settings}\label{sec:Parameterset}

To comprehensively evaluate the performance of the algorithm, we consider two types of sensing matrices $\bm{A} \in \mathbb{R}^{m \times n}$ that are widely used in the compressed sensing literature.
\begin{itemize}
	\item \textbf{Oversampled Discrete Cosine Transform (DCT) Matrix:} Following the construction methods in \cite{rahimi2019scale,tao2022minimization,wang2023variant,yin2015minimization}, the column vectors $\bm{a}_j$ are defined as $\bm{a}_j := 1/\sqrt{m} \cdot \cos \left( 2\pi \bm{w} j/F \right), \; j = 1, \dots, n$, where $\bm{w} \in [0, 1]^m$ is a random vector uniformly distributed within the unit hypercube. The parameter $F > 0$ controls the coherence of the matrix; a larger value of $F$ implies a higher correlation between column vectors, thereby increasing the difficulty of the recovery problem.
	
	\item \textbf{Gaussian Matrix:} This class of matrices is generated by sampling from a multivariate normal distribution $\mathcal{N}(0, \bm{\Sigma})$. By the settings in \cite{yin2015minimization}, the elements of the covariance matrix $\bm{\Sigma}$ are given by $\Sigma_{i,j}=\{(1-r)*\bm{I}(i=j)+r\}_{i,j}$, where $r \in (0, 1)$ is the correlation coefficient. A higher value of $r$ indicates strong correlation among observations, which is typically regarded as a challenging scenario for sparse recovery.
\end{itemize}

We set $\epsilon=10^{-6}$ and the maximum number of iterations $K=50$ for  DCEN-DCA in Algorithm \ref{alg:DCEN-DCA}. For the DCEN-DCA subproblem in Algorithm \ref{alg:ADMMsub} and Algorithm \ref{alg:siopt_admm}, we chose parameters $\varepsilon_{\mathrm{abs}}=10^{-6}$, $\varepsilon_{\mathrm{rel}}=10^{-6}$, $\varepsilon = 10^{-6}$, and the maximum number of iterations $T=5n$. The parameter settings for DCA and ADMM, utilized for solving the $\ell_1-\alpha\ell_2$ model, are maintained consistent with those employed in DCEN. The same goes for $\ell_{1}/\ell_{\infty}$-ADMM. For $\ell_{1}/\ell_{\infty}$\text{-FISTA}, the algorithm is terminated once tolerance $\text{tol}=10^{-6}$ is satisfied or when the number of iterations reaches $5n$. All other settings of the algorithm were set to default ones. To obtain a high-quality initial estimate and avoid local minima, we adopt a warm-start strategy. Specifically, following the standard practice adopted in existing literature, we initialize all algorithms using the solution of $\ell_{1}$-ADMM \cite{rahimi2019scale,tao2022minimization,yin2015minimization}. We assess sparse recovery performance using the success rate, defined as the ratio of successful reconstructions to the total number of trials. A trial is considered successful when the relative error between the reconstruction vector $\bm{x}^*$ and the ground truth signal $\bm{x}^\sharp$ satisfies $\frac{\|\bm{x}^* - \bm{x}^\sharp\|_2}{\|\bm{x}^\sharp\|_2} < 10^{-3}$. 

\subsection{Efficiency Evaluation in Noiseless Scenario}\label{sec:Noisefree}

This subsection is dedicated to comparing the efficiency of the DCEN model and its corresponding solver against established methods in a noiseless setting. The regularization parameters for all compared models are uniformly fixed at $\lambda=10^{-7}$, with the exception of the $\ell_p$ model, which employs an adaptive parameter selection strategy. Apart from the specific stopping criteria mentioned in Section \ref{sec:Parameterset}, all remaining algorithmic parameters adopt their respective default settings.

For the oversampled DCT matrix, the dimensions are set to $300 \times 1000$, with oversampling factors $F = 20$ and $F = 30$, corresponding sparsity levels $s = 10$ and $s = 20$, respectively, and a minimum separation between non-zero entries equal to $F$. For the Gaussian random matrix, the dimensions are set to $64 \times 1024$, with correlation parameters $r = 0.2$ and $r = 0.9$ corresponding to sparsity levels $s = 10$ and $s = 15$, respectively. Each experimental configuration is repeated independently for 100 trials. Figure~\ref{fig:RelativeErrorDistributions} compares the recovery accuracy of DCEN against the $\ell_1-\alpha\ell_2$ model across different sensing matrices and coherence levels, reporting the mean, median, and quantiles of the reconstruction error. The results demonstrate that DCEN consistently achieves superior recovery performance.

To enable a more systematic comparison, we unify the matrix dimensions to $64 \times 1024$ for both the oversampled DCT and Gaussian random matrices. The oversampling factor $F$ is set to 20 and 30. The sparsity level $s$ is varied from 2 to 30 in increments of 2. Each configuration is again repeated 100 times. As shown in Figure~\ref{fig:SuccessRate}, DCEN-DCA achieves higher success rates than the $\ell_1-\alpha\ell_2$ model and other competing algorithms in high coherence scenarios.

\begin{figure*}[t]
	\centering
	\caption{Relative error distributions (log10) of LASSO, $\ell_1-\alpha\ell_2$, and DCEN models under DCT/Gaussian sensing matrices with varying $F$, $r$, and $s$.}
	\label{fig:RelativeErrorDistributions}
	
	\begin{subfigure}[t]{0.35\textwidth}  
		\centering
		\includegraphics[width=\textwidth,trim=0 0 0 0, clip]{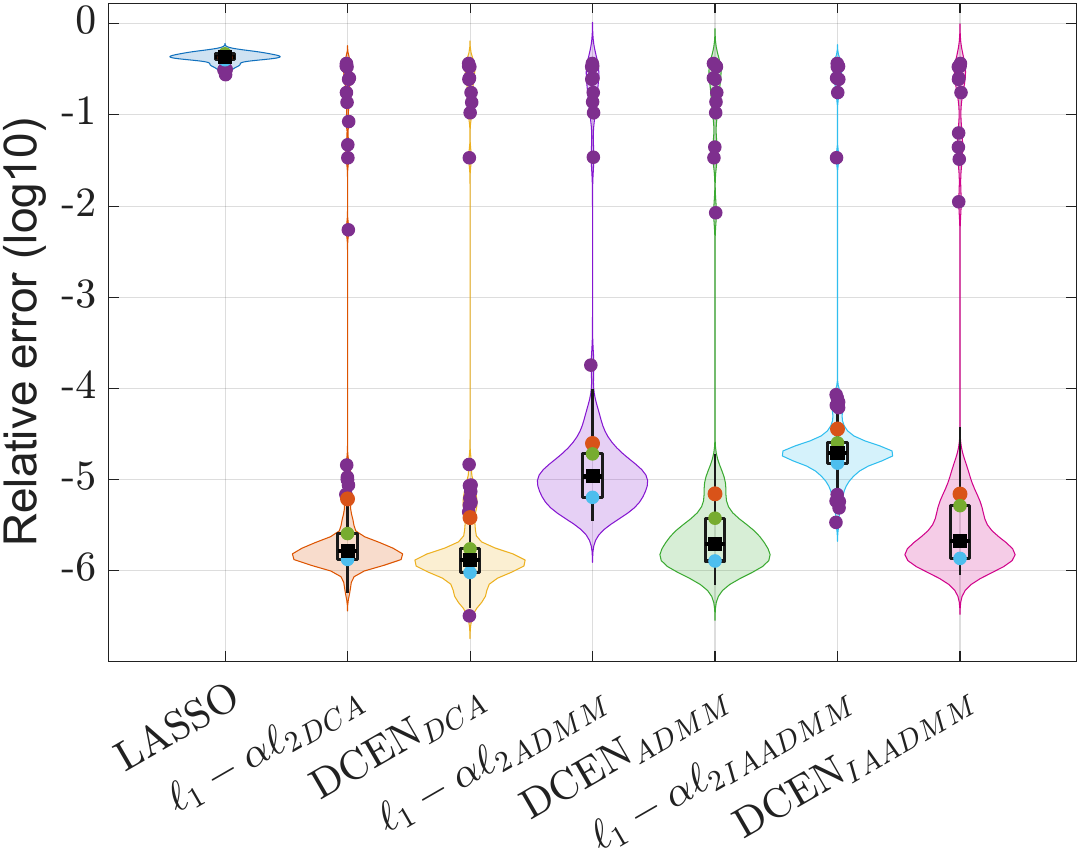}
		\caption{Oversampled DCT: $F=20$ and $s=20$.}
		\label{fig:sub_a}
	\end{subfigure}
	\begin{subfigure}[t]{0.35\textwidth}
		\centering
		\includegraphics[width=\textwidth,trim=0 0 0 0, clip]{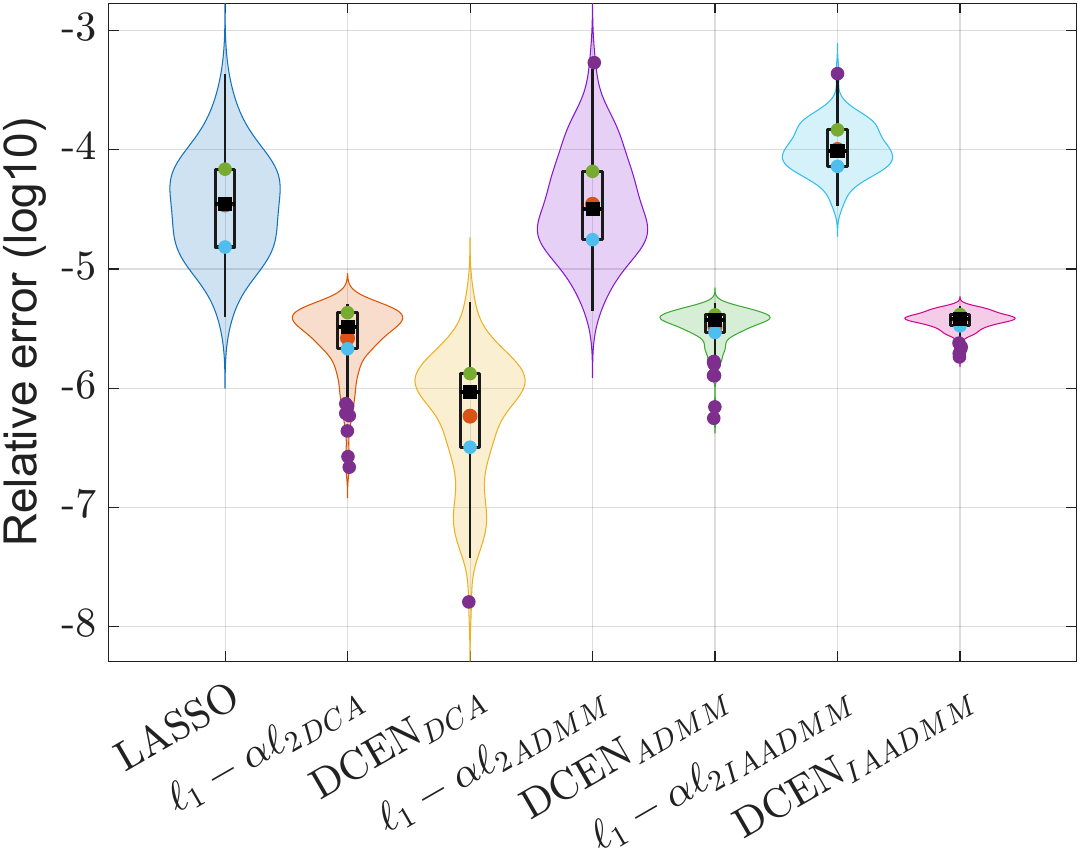}
		\caption{Gaussian: $r=0.2$ and $s=10$.}
		\label{fig:sub_b}
	\end{subfigure}
	
	\vspace{0.8em}  
	
	\begin{subfigure}[t]{0.35\textwidth}
		\centering
		\includegraphics[width=\textwidth]{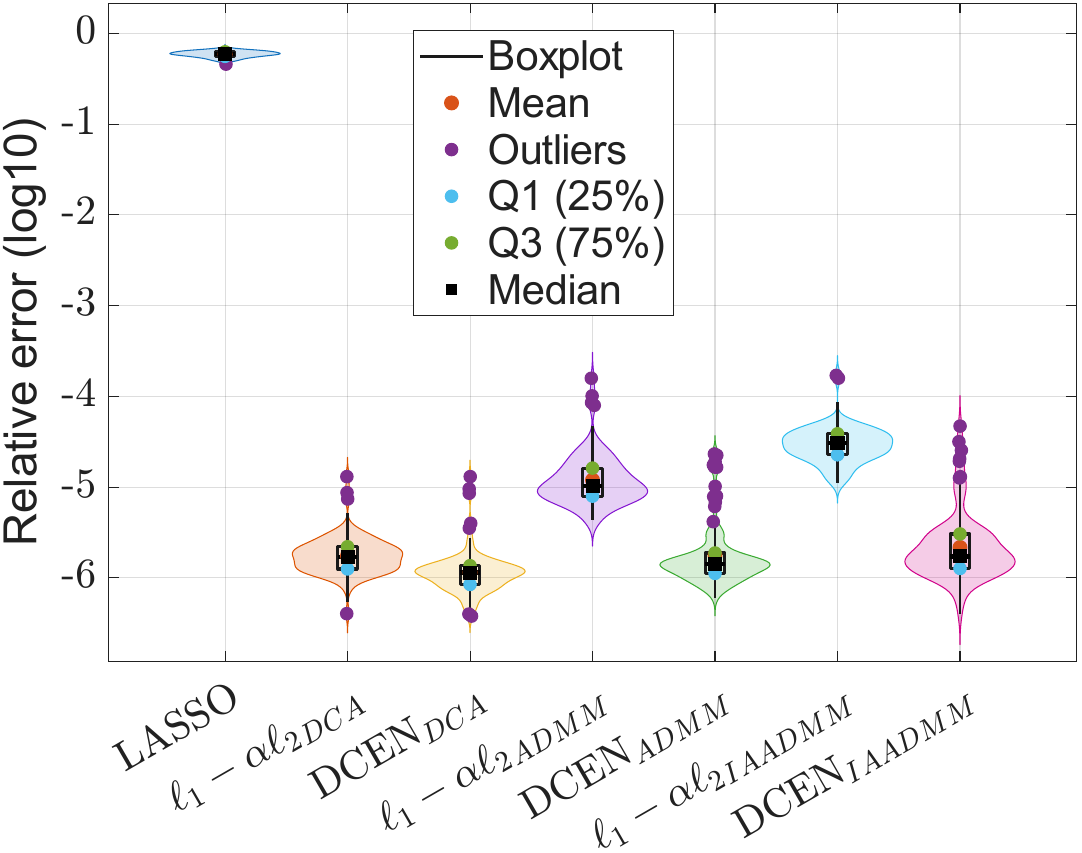}
		\caption{Oversampled DCT: $F=30$ and $s=10$.}
		\label{fig:sub_c}
	\end{subfigure}
	\begin{subfigure}[t]{0.35\textwidth}
		\centering
		\includegraphics[width=\textwidth]{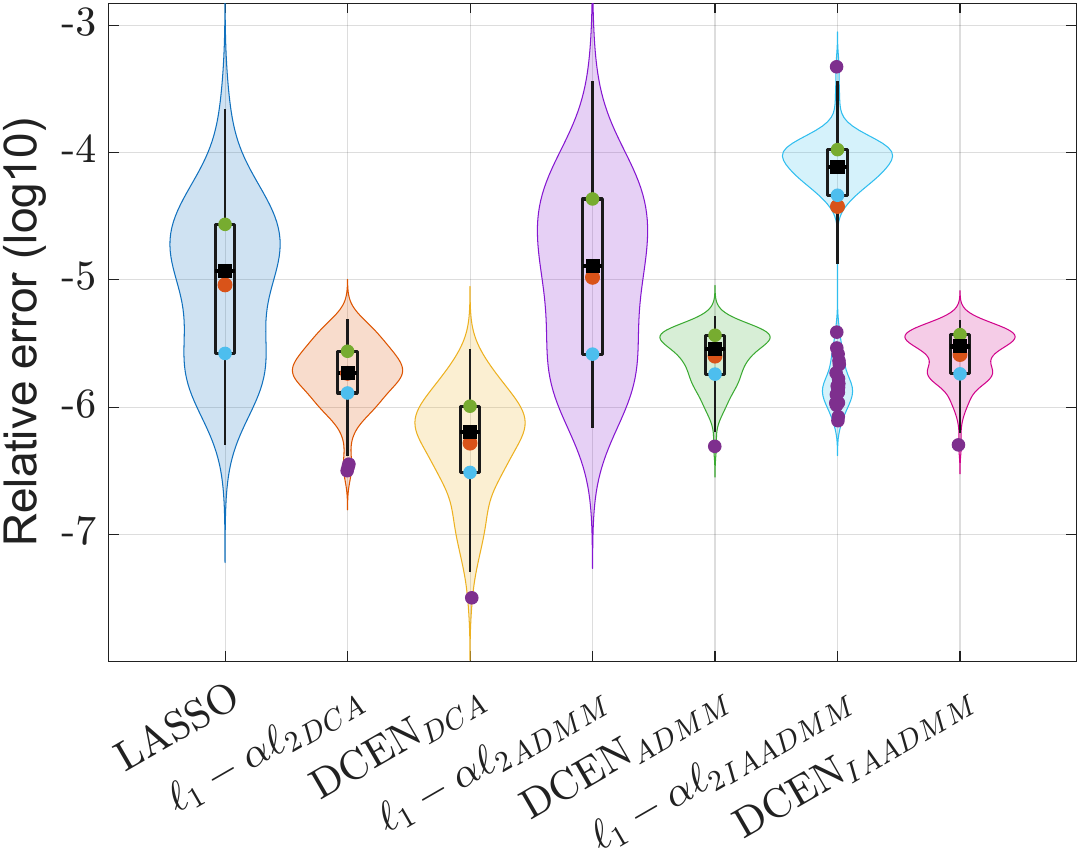}
		\caption{Gaussian: $r=0.9$ and $s=15$.}
		\label{fig:sub_d}
	\end{subfigure}
	
\end{figure*}

%
%
%
%
\begin{figure*}[t]  
	\centering
	\caption{Success rate versus sparsity $s$ for oversampled DCT.}
	\label{fig:SuccessRate}
		\centering
		\includegraphics[width=0.75\textwidth]{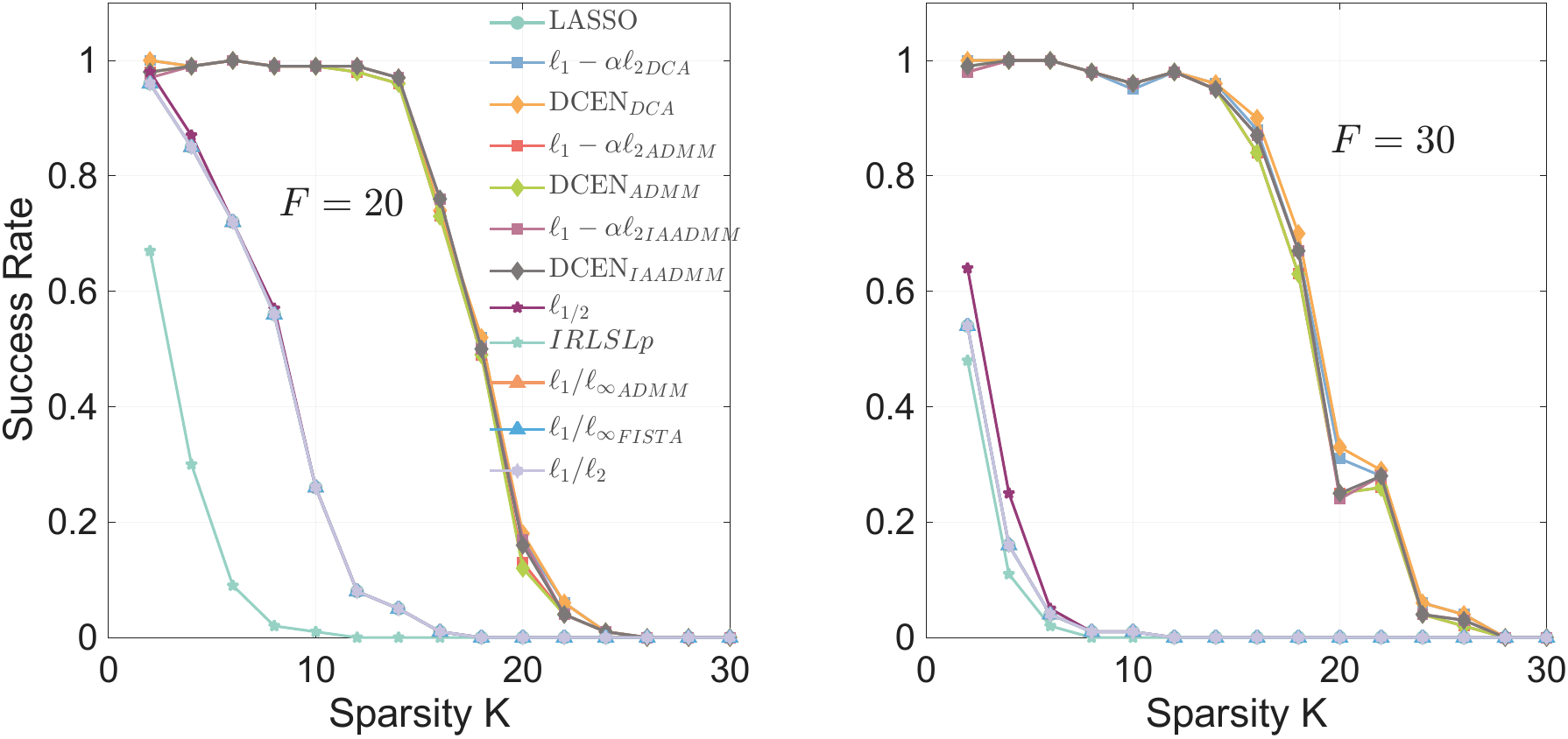}
\end{figure*}

\subsection{Robust recovery of DCEN}\label{sec:Noise}

In this subsection, we evaluate the robust recovery performance of DCEN and its associated solvers—namely, the DCA and ADMM—in noisy scenarios. Specifically, contaminated measurements $\bm{b}$ are generated by adding white Gaussian noise to the clean observation vector $\bm{Ax}^\sharp$, implemented in MATLAB via the \texttt{awgn} function. We consider five signal-to-noise ratio (SNR) levels: 10\,dB, 20\,dB, 30\,dB, 40\,dB, and 50\,dB. For each noise level, we compute the reconstructed solution $\bm{x}^*$ obtained from the $\ell_{1-2}$ and DCEN models and its algorithms, and evaluate the reconstruction SNR as
$10 \log_{10} \frac{\|\bm{x}^\sharp\|_2^2}{\|\bm{x}^* - \bm{x}^\sharp\|_2^2}$.

\begin{table*}
	\centering
	\caption{Reconstruction SNR (dB) for the DCEN and $\ell_1-\alpha\ell_2$ models and their solvers under varying noise levels using an oversampled DCT sensing matrix.}
	\label{tab:SNRmetrics_compare1}
	\resizebox{\textwidth}{!}{%
		\begin{tabular}{cccccccc} 
			\toprule
			SNR(dB) & LASSO    & $\ell_1-\alpha\ell_2$-DCA & DCEN-DCA & $\ell_1-\alpha\ell_2$-ADMM & DCEN-ADMM & $\ell_1-\alpha\ell_2$-IAADMM & DCEN-IAADMM  \\ 
			\hline
			10dB    & 4.6616~  & 6.5908~   & 6.6227~  & 6.5955~    & 6.7078~   & 6.6571~      & \textbf{6.7648}~      \\
			20dB    & 10.9746~ & 19.7750~  & \textbf{20.1652}~ & 19.8222~   & 19.9963~  & 19.9177~     & 20.1323~     \\
			30dB    & 16.2115~ & 28.6825~  & \textbf{28.7654}~ & 28.6514~   & 28.6351~  & 28.6601~     & 28.6668~     \\
			40dB    & 27.1385~ & 38.2164~  & 38.3127~ & 38.2961~   & 38.2931~  & \textbf{38.4896}~     & 38.4493~     \\
			50dB    & 25.4694~ & 37.0743~  & \textbf{37.1253}~ & 37.0910~   & 37.1123~  & 37.0782~     & 37.1143~     \\
			\bottomrule
	\end{tabular}}
\end{table*}

\begin{table*} 
	\centering
	\caption{Reconstruction SNR (dB) for the DCEN and $\ell_1-\alpha\ell_2$ models and their solvers under varying noise levels using a high-coherence Gaussian random sensing matrix ($r=0.9$).}
	\label{tab:SNRmetrics_compare2}
	\resizebox{\textwidth}{!}{%
		\begin{tabular}{cccccccc} 
			\toprule
			SNR(dB) & LASSO   & $\ell_1-\alpha\ell_2$-DCA & DCEN-DCA & $\ell_1-\alpha\ell_2$-ADMM & DCEN-ADMM & $\ell_1-\alpha\ell_2$-IAADMM & DCEN-IAADMM  \\ 
			\hline
			10dB    & 3.9467~ & 5.1931~   & \textbf{5.2826}~  & 5.2179~    & 5.1183~   & 5.2155~      & 5.1155~      \\
			20dB    & 3.8819~ & 5.2219~   & \textbf{5.2695}~  & 5.2362~    & 5.1512~   & 5.2299~      & 5.1456~      \\
			30dB    & 5.6162~ & 8.3980~   & \textbf{8.5392}~  & 8.3845~    & 8.0089~   & 8.3535~      & 7.9788~      \\
			40dB    & 5.7028~ & 8.3714~   & \textbf{8.5469}~  & 8.4014~    & 8.1203~   & 8.3869~      & 8.1189~      \\
			50dB    & 5.6703~ & 8.7965~   & \textbf{8.8369}~  & 8.8048~    & 8.5151~   & 8.7911~      & 8.4710~      \\
			\bottomrule
	\end{tabular}}
\end{table*}

Experiments are conducted on two types of sensing matrices. For the oversampled DCT matrix, we set its dimensions to $100 \times 1000$, with an oversampling factor $F = 15$, sparsity level $s = 10$, and a minimum separation of 15 between non-zero entries. For the Gaussian random matrix, the dimensions are also $100 \times 1000$, with a correlation parameter $r = 0.9$ and sparsity level $s = 30$. Under each experimental configuration, we perform 50 independent trials and report the average reconstruction SNR across all runs. Table~\ref{tab:SNRmetrics_compare1} presents the results for the oversampled DCT matrix. The regularization parameters $\lambda$ and weight parameter $\gamma$ are tuned according to the noise level using a grid search strategy. The results demonstrate that, in most cases, particularly when solved via DCA, DCEN achieves higher reconstruction SNR compared to the $\ell_1-\alpha\ell_2$ model. Table~\ref{tab:SNRmetrics_compare2} reports the results for the Gaussian random matrix. Under high-coherence setting ($r = 0.9$), DCEN-DCA exhibits a clear advantage, consistently outperforming both the $\ell_1-\alpha\ell_2$ model and ADMM-based variants. Notably, DCEN-DCA not only yields higher reconstruction accuracy but also demonstrates superior robustness across varying noise levels. In contrast, the performance of ADMM-type algorithms degrades, likely due to the adverse effects of high matrix coherence on convergence and algorithm parameters tuning. Overall, across all tested scenarios, DCEN-DCA consistently outperforms both ADMM-based solvers and the standard $\ell_1-\alpha\ell_2$ model.

\subsection{MRI Image Reconstruction}

In this subsection, we evaluate the performance of CDCEN  for two-dimensional undersampled magnetic resonance imaging (MRI) reconstruction. Specifically, the DCEN-based reconstruction method is formulated as
\begin{equation}\label{eq:DCENMRI}
	\min_{\bm{u}} \; \gamma \bigl( \|\nabla \bm{u}\|_{1} - \alpha \|\nabla \bm{u}\|_{2} \bigr) + (1 - \gamma) \|\nabla \bm{u}\|_{2}^{2}
	\quad \text{s.t.} \quad \bm{R} \mathcal{F} \bm{u} = \bm{f},
\end{equation}
where $\mathcal{F}$ denotes the two-dimensional discrete Fourier transform, $\bm{R}$ is a radial sampling mask in the frequency domain, and $\bm{f}$ represents the observed $k$-space data. The objective function in \eqref{eq:DCENMRI} can be expressed as the difference of two convex functions; hence, it belongs to the class of DC programs. Consequently, the problem can be efficiently solved by combining DCA with the split Bregman method. At the $k$-th outer DCA iteration, the concave term $-\|\nabla \bm{u}\|_2$ is linearized at the current iterate $\bm{u}_k$, yielding the convex subproblem
\begin{equation}\label{eq:DCENmrisubproblem}
	\begin{aligned}
		\min_{\bm{u}} \; r_1(\bm{u}) + r_2(\bm{u}) - \alpha \gamma \langle \bm{t}^k, \nabla \bm{u} \rangle \; \text{s.t.} \; \bm{R} \mathcal{F} \bm{u} = \bm{f},
	\end{aligned}
\end{equation}
where $r_1(\bm{u}) = \gamma \bigl( |\partial_{\bm{x}} \bm{u}| + |\partial_{\bm{y}} \bm{u}| \bigr)$, $r_2(\bm{u}) = (1 - \gamma) \bigl( |\partial_{\bm{x}} \bm{u}|^2 + |\partial_{\bm{y}} \bm{u}|^2 \bigr)$ and the gradient direction vector $\bm{t}^k = (\bm{t}_{\bm{x}^k}, \bm{t}_{\bm{y}^k})$ is given by
\begin{equation}\label{eq:grad-direc-vec}
	\bm{t}^k = (\partial_{\bm{x}} \bm{u}^k, \partial_{\bm{y}} \bm{u}^k)/\sqrt{|\partial_{\bm{x}} \bm{u}^k|^2 + |\partial_{\bm{y}} \bm{u}^k|^2}.
\end{equation}
To solve subproblem \eqref{eq:DCENmrisubproblem} efficiently, we introduce auxiliary variables $\bm{d}_{\bm{x}} = \partial_{\bm{x}} \bm{u}$, $\bm{d}_{\bm{y}} = \partial_{\bm{y}} \bm{u}$, $\bm{\bar{d}} = (\bm{d}_{\bm{x}}, \bm{d}_{\bm{y}})^\top$ and reformulate the problem \eqref{eq:DCENmrisubproblem} via quadratic penalization as
\begin{equation}\label{eq:DCENmrisubproblem2}
	\begin{aligned}
		&(\bm{u}_{k+1}, \bm{d}_{\bm{x}_{k+1}}, \bm{d}_{\bm{y}_{k+1}}) \\
		&=\arg\min_{\bm{u}, \bm{d_x}, \bm{d_y}} \;  \gamma r_1(\bm{\bar{d}}) + r_2(\bm{\bar{d}}) - \gamma\alpha (\bm{t_x}^k \bm{d_x} + \bm{t_y}^k \bm{d_y}) \\ 
		& + \frac{\mu}{2} \| \bm{R} \mathcal{F} \bm{u} - \bm{f} \|_2^2 + \frac{\beta}{2} \left( \| \bm{d_x} - \partial_{\bm{x}} \bm{u} \|_2^2 + \| \bm{d_y} - \partial_{\bm{y}} \bm{u} \|_2^2 \right).
	\end{aligned}
\end{equation}
By introducing Bregman multipliers $\bm{b} = (\bm{b_x}, \bm{b_y})$ for the constraint $\bm{d} = \nabla \bm{u}$ and a dual variable $\bm{z}$ for the data consistency constraint $\bm{R} \mathcal{F} \bm{u} = \bm{f}$, the split Bregman framework can be applied to solve \eqref{eq:DCENmrisubproblem2} efficiently. The complete algorithmic procedure is summarized in Algorithm~\ref{alg:split_bregman}. Notably, when $\gamma = 1$, the model \eqref{eq:DCENMRI} reduces to the $\ell_1-\alpha\ell_2$ TV-type formulation, and the proposed algorithm simplifies to Algorithm~3 presented in \cite{yin2015minimization}.
\begin{algorithm}[t]
	\caption{Split Bregman method for solving \eqref{eq:DCENMRI}.}
	\label{alg:split_bregman}
	\begin{algorithmic}[1]
		\REQUIRE $\bm{R}, \bm{f}, \mathcal{F}$; parameters $\mu, \beta > 0$; $\gamma, \alpha \in (0,1)$; maximum iterations \texttt{MAXOR} and \texttt{MAXIR}.
		\ENSURE Reconstructed image $\bm{u}^*$.
		\STATE Initialize $\bm{u} \gets \bm{0}$, $\bm{d_x} \gets \bm{0}$, $\bm{d_y} \gets \bm{0}$, $\bm{b_x} \gets \bm{0}$, $\bm{b_y} \gets \bm{0}$, $\bm{z} \gets \bm{f}$
		\FOR{outer = 1 to \texttt{MAXOR}}
		\STATE Update $\bm{t}$ using \eqref{eq:grad-direc-vec}
		\FOR{inner = 1 to \texttt{MAXIR}}
		\STATE $\bm{M}_{R} = \mu \mathcal{F}^\top \bm{R} \bm{z} + \beta \bm{D_x}^\top (\bm{d_x} - \bm{b_x}) + \beta \bm{D_y}^\top (\bm{d_y} - \bm{b_y})$
		\STATE $\bm{u} \gets (\mu \bm{R}^\top \bm{R} - \beta \mathcal{F} \bm{\Delta} \mathcal{F}^\top)^{-1} \bm{M}_{R}$
		\STATE $\bm{d_x} \gets \shrink{ \dfrac{ \gamma \alpha \bm{t_x} + \beta (\bm{D_x u} + \bm{b_x}) }{ \beta + 2(1 - \gamma) } }{ \dfrac{ \gamma }{ \beta + 2(1 - \gamma) } }$
		\STATE $\bm{d_y} \gets \shrink{ \dfrac{ \gamma \alpha \bm{t_y} + \beta (\bm{D_y u} + \bm{b_y}) }{ \beta + 2(1 - \gamma) } }{ \dfrac{ \gamma }{ \beta + 2(1 - \gamma) } }$
		\STATE Update $\bm{b_x} \gets \bm{b_x} + \bm{D_x u} - \bm{d_x}$
		\STATE Update $\bm{b_y} \gets \bm{b_y} + \bm{D_y u} - \bm{d_y}$
		\ENDFOR
		\STATE Update $\bm{z} \gets \bm{z} + \bm{f} - \bm{R} \mathcal{F} \bm{u}$
		\ENDFOR
		\STATE $\bm{u}^* \gets \bm{u}$
	\end{algorithmic}
\end{algorithm}

\begin{figure}[htbp]
	\centering
	\includegraphics[width=0.5\textwidth]{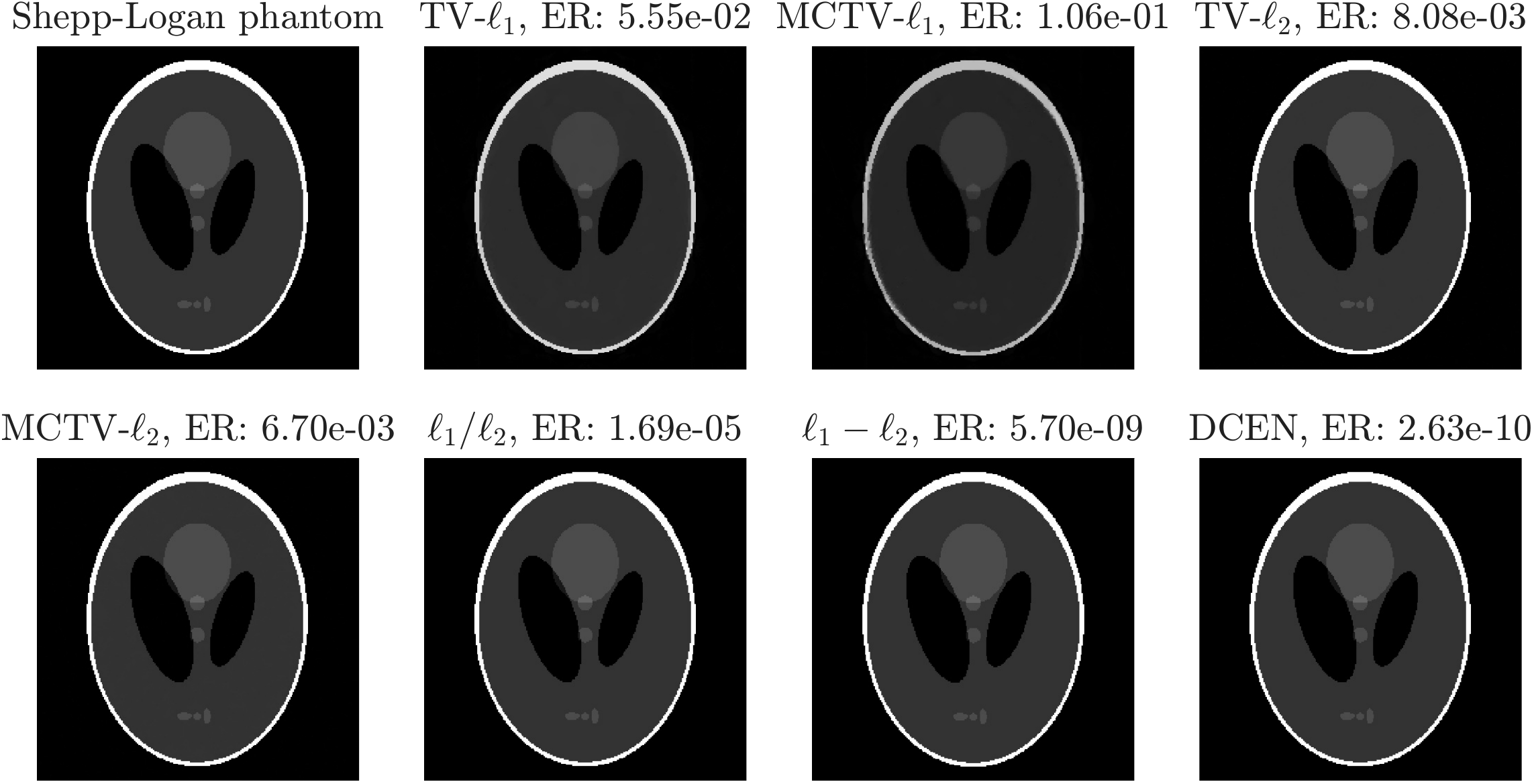}  
	\caption{MRI reconstruction results from 8 radial sampled projections.}
	\label{fig:MRIcon}
\end{figure}
We employ the standard Shepp-Logan phantom as the test image for our experiments. The comparative models include the classical TV-$\ell_1$\cite{candes2006robust}, MCTV-$\ell_1$ \cite{liu2018convex}, TV-$\ell_2$\cite{shen2021magnetic}, MCTV-$\ell_2$\cite{shen2021magnetic}, $\ell_1$-$\ell_2$, and $\ell_1/\ell_2$ models. Figure~\ref{fig:MRIcon} presents the reconstruction accuracy of all models under a radial sampling scheme with only eight projection lines. Experimental results demonstrate that DCEN-TV achieves the highest reconstruction accuracy. Compared to the $\ell_1$-$\ell_2$ model, the relative error of DCEN is reduced by an order of magnitude, indicating a significant improvement in reconstruction quality. Notably, although the $\ell_1/\ell_2$ model is currently regarded as state-of-the-art in MRI reconstruction, it yields substantially higher relative errors than DCEN---highlighting the superior performance of our method in sparse MRI reconstruction.

\subsection{Variable Selection under Highly Correlated Predictors}\label{subsec:correlated_experiment}

To evaluate the feature selection and sparse recovery performance under high correlation, we conduct numerical experiments with $n=20$ and $p=100$. The simulation design incorporates a highly correlated signal block consisting of the first three predictors ($\rho=0.99$), while the remaining 97 variables are i.i.d. standard normal noise. Gaussian noise with $\sigma=1.2$ is added to the response, and all reported metrics are averaged over 100 independent Monte Carlo trials.

The experimental results, summarized in Table~\ref{tab:correlated} and Figure~\ref{fig:SelectVar}, reveal distinct performance gaps among the tested models. Traditional methods, including LASSO and the $\ell_1-\alpha\ell_2$ penalty, exhibit severe degradation in selection probabilities, yielding average true-variable recovery rates of only 29.3\% and 24.3\%, respectively. While the Elastic Net improves the balance across correlated predictors with an average recovery rate of 76.0\%, it does so at the expense of a markedly high false discovery rate, selecting an average of 8.65 noise variables. Furthermore, purely nonconvex penalties such as $\ell_1/\ell_\infty$ and $\ell_1/\ell_2$ demonstrate extreme instability, often concentrating nonzero mass on a single predictor while erroneously discarding the others.

In contrast, the proposed DCEN-ADMM achieves a superior trade-off between sensitivity and specificity. It attains individual true-variable selection probabilities of 85.0\%, 78.0\%, and 86.0\% (averaging 83.0\%), significantly outperforming the Elastic Net. Simultaneously, it maintains a low false discovery rate, with an average of only 4.98 falsely selected variables and a noise selection rate of 5.13\%. Notably, the ADMM-based implementation exhibits greater stability than the DCA-based version in this challenging regime. These results suggest that DCEN-ADMM effectively mitigates the instability inherent in convex models and the local-minima issues of nonconvex approaches, providing robust support recovery for the small-sample, high-dimensional, and strongly correlated data common in bioinformatics and imaging applications.

\begin{table}[tb]
	\centering
	\caption{Variable selection performance under highly correlated predictors. \emph{Avg.\ total}: average number of selected variables; \emph{Avg.\ false}: average number of falsely selected noise variables; \emph{Noise sel.\ rate}: average per-noise-variable selection probability.}
	\footnotesize
	\label{tab:correlated}
	\resizebox{0.5\textwidth}{!}{%
		\begin{tabular}{lcccccc}
			\toprule
			Method & Var1 $\bm{\uparrow}$ & Var2 $\bm{\uparrow}$ & Var3 $\bm{\uparrow}$ & Avg.\ total $\bm{\downarrow}$ & Avg.\ false $\bm{\downarrow}$ & Noise sel.\ rate $\bm{\downarrow}$ \\
			\midrule
			LASSO                  & 44.0\% & 16.0\% & 28.0\% & 6.48  & 5.60 & 5.77\% \\
			Elastic Net            & 78.0\% & 72.0\% & 78.0\% & 10.93 & 8.65 & 8.92\% \\
			$\ell_1-\alpha\ell_2$  & 38.0\% & 11.0\% & 24.0\% & 3.07  & 2.34 & 2.41\% \\
			$\mathrm{DCEN}_{ADMM}$     & 85.0\% & 78.0\% & 86.0\% & 7.47 & 4.98 & 5.13\% \\
			$\mathrm{DCEN}_{DCA}$               & 60.0\% & 22.0\% & 49.0\% & 9.22  & 7.91 & 8.15\% \\
			$\ell_{1/2}$           & 87.0\% &  2.0\% & 19.0\% & 6.49  & 5.41 & 5.58\% \\
			${\ell_1/\ell_\infty}_{FISTA}$ & 100.0\% & 0.0\% & 0.0\% & 8.53 & 7.53 & 7.76\% \\
			$\ell_1/\ell_2$        & 95.0\% & 21.0\% & 72.0\% & 24.06 & 22.18 & 22.87\% \\
			\bottomrule
	\end{tabular}}
\end{table}

\begin{figure}[htbp]
	\centering
	\includegraphics[width=0.5\textwidth]{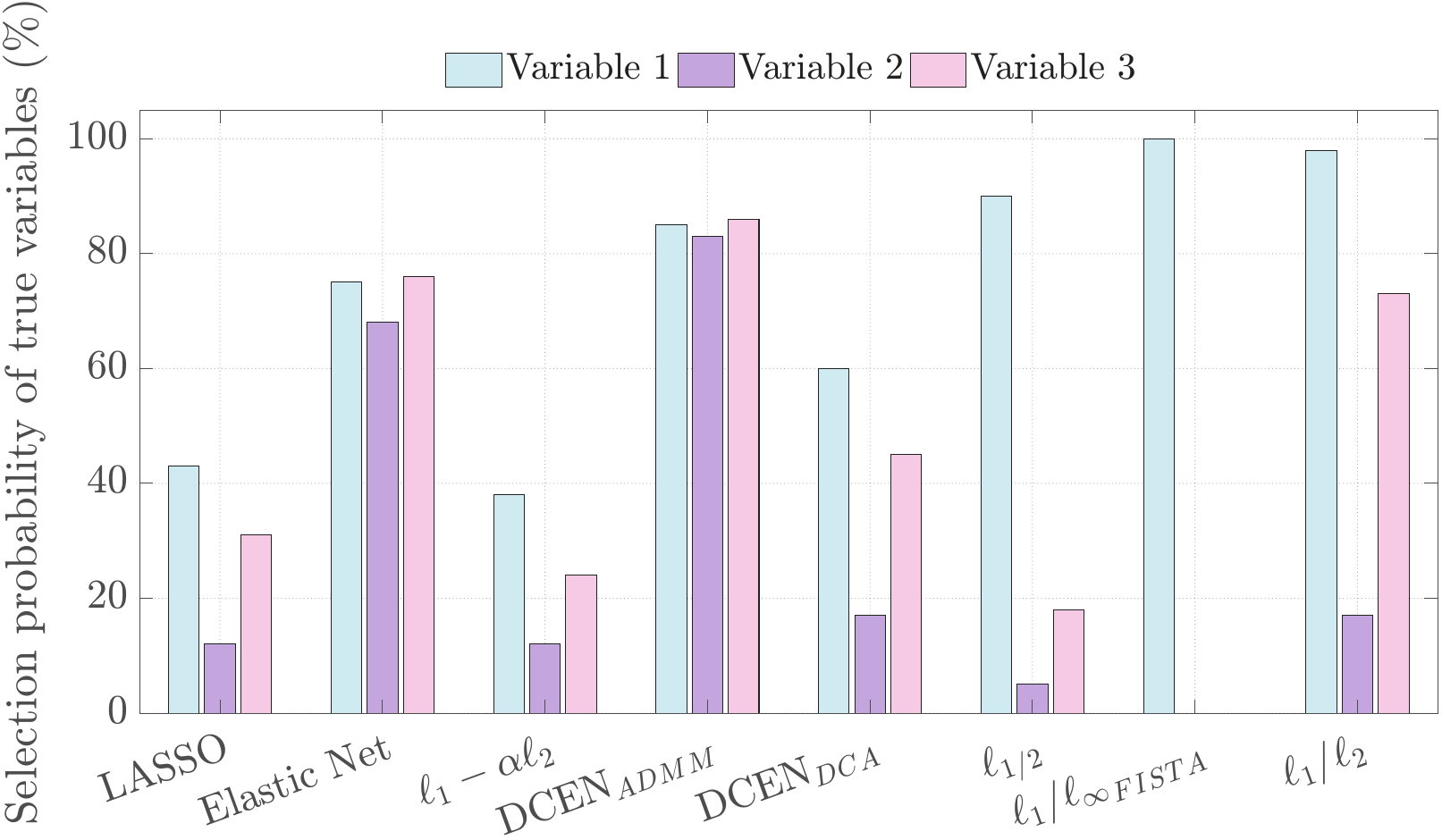}  
	\caption{Variable selection stability under highly correlated predictors ($n=20$, $p=100$; averaged over 100 Monte Carlo trials).}
	\label{fig:SelectVar}
\end{figure}

\section{Conclusion}\label{sec:conclusion}
This work proposes DCEN, a unified Difference-of-Convex modeling framework that integrates the nonconvex $\ell_1 - \alpha\ell_2$ penalty with $\ell_2^2$ regularization. By balancing sparsity promotion with solution stability, DCEN generalizes several representative models—including LASSO, Elastic Net, and the nonconvex $\ell_1-\alpha\ell_2$ model—effectively alleviating $\ell_1$-induced bias while maintaining robustness under high variable correlation.

Theoretically, we establish a comprehensive statistical consistency framework for DCEN. We derive oracle inequalities and provide rigorous recovery bounds that characterize the estimator's behavior in high-dimensional regimes. Furthermore, we establish sufficient conditions for exact and stable recovery under RIP. Algorithmically, we derive closed-form expressions for the proximal operator of the DCEN regularizer, enabling efficient optimization via DCA and ADMM. Within the Kurdyka-Łojasiewicz (K\L) framework, we prove the global convergence of DCA and characterize its linear convergence rate.

Extending the framework to image processing, we develop DCEN-TV using total variation regularization, solved via the split Bregman method. Extensive numerical experiments in high-dimensional variable selection and MRI reconstruction demonstrate that DCEN consistently outperforms state-of-the-art methods in accuracy, robustness, and noise resilience. Future research will explore extensions to low-rank matrix recovery and multitask learning.


 
%

\bibliographystyle{IEEEtran}
\bibliography{refs}

\end{document}